\documentclass[12pt]{amsart}
\usepackage{fullpage}
\usepackage[all]{xypic}
\usepackage[hmargin=2.5cm,vmargin=2.5cm]{geometry}
\usepackage{amssymb}
\usepackage{amsthm}
\usepackage{graphicx}
\parskip 0.2cm
\newtheorem{theorem}{Theorem}[section]
\newtheorem{corollary}[theorem]{Corollary}

\newtheorem{lemma}[theorem]{Lemma}

\newtheorem{remark}[theorem]{Remark}
\newenvironment{myenumerate}{
  \begin{enumerate}
  \setlength{\itemsep}{1pt}
  \setlength{\parskip}{0pt}
  \setlength{\parsep}{0pt}
}{
  \end{enumerate}
}

\begin{document}
\title{ON THE AUTOMORPHISM GROUP OF A SMOOTH SCHUBERT VARIETY} 

\author{S. Senthamarai Kannan}

\address{Chennai Mathematical Institute, Plot H1, SIPCOT IT Park,
Siruseri, Kelambakkam  603103, Tamilnadu, India.}
\email{kannan@cmi.ac.in.}
\maketitle

\begin{abstract}
Let $G$ be a simple algebraic group of adjoint type over the field $\mathbb{C}$ 
of complex numbers. Let $B$ be a Borel subgroup of $G$ containing a maximal 
torus $T$ of $G$. Let $w$ be an element of the Weyl group $W$ and 
let $X(w)$ be the Schubert variety in $G/B$ corresponding to $w$. 
Let $\alpha_{0}$ denote the highest root of $G$ with respect to $T$ and 
$B.$ Let $P$ be the stabiliser of $X(w)$ in $G.$
In this paper, we prove that if $G$ is simply laced and $X(w)$ is smooth, 
then the connected component of the automorphism group of $X(w)$ containing the identity 
automorphism equals $P$ if and only if $w^{-1}(\alpha_{0})$ is a negative root ( see 
Theorem 4.2 ). We prove a partial result in the non simply laced case 
( see Theorem 6.6 ). 
\end{abstract}

Keywords: Automorphism group, Schubert varieties, Tangent bundle.
\section{Introduction}

Recall that if $X$ is a smooth projective variety over $\mathbb{C}$,  the connected component
of the group of all automorphisms of $X$ containing the identity automorphism is an algebraic group ( see [15, Theorem 3.7, p.17] and [7, p.268] ), which deals also with the case when $X$ may be singular or
it may be defined over any field.  Further, the Lie algebra  of this automorphism group is isomorphic to the space of all tangent vector fields on $X$, that is the space $H^{0}(X, T_{X})$ of all global sections of the tangent bundle $T_{X}$ of $X$ ( see  [15,  Lemma 3.4, p.13] ). 

The aim of this paper is to study the connected component, containing the identity automorphism  of the group of all automorphisms of a smooth Schubert variety $X(w)$ in the flag variety $G/B$ associated to a simple algebraic group $G$ ( see notation in section 2 ).  We give a fairly precise description of the connected component, containing the identity automorphism of the group of all automorphisms of a smooth Schubert variety $X(w)$ in the flag variety $G/B$ associated to a simple simply laced algebraic group $G$ of adjoint type over $\mathbb{C}$, that is of type $A$, $D$, $E$;  in particular, we give a description of a smooth Schubert variety for which this automorphism group  is a subgroup of $G$.

More precisely,  we prove the following results:

Let $w\in W$ be such that $X(w)$ is smooth. Let $A_{w}$ denote 
the connected component of the group of all 
automorphisms of $X(w)$ containing the identity automorphism. Let $T$ be a maximal torus of $G$ contained in $B$ and $B^{+}\supset T$ be the Borel subgroup of $G$ opposite to $B$ determined by $T.$ Let $\alpha_{0}$ denote the highest root of $G$ with 
respect to $T$ and $B^{+}$. 

For the left action of $G$ on $G/B$,  let  $P_{w}$ denote the stabiliser of $X(w)$ in $G.$ Let $\phi_{w}:P_{w} \longrightarrow A_{w}$ be the homomorphism of algebraic groups induced by the action of $P_{w}$ on $X(w).$  For precise notation, see section 2 and section 3. 

\begin{theorem} (see  Theorem 4.2 )

Assume that $G$ is a simple, simply laced algebraic group of adjoint type over $\mathbb{C}$. Then we have 
\begin{enumerate}
\item 
$\phi_{w}:P _{w}\longrightarrow A_{w}$ is surjective. 
\item
$\phi_{w}:P _{w}\longrightarrow A_{w}$ is an isomorphism if and only if  
$w^{-1}(\alpha_{0})$ is a negative root.
\end{enumerate}
\end{theorem}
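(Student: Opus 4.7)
Since $A_w$ is connected, surjectivity of $\phi_w$ in part (1) reduces to surjectivity of the Lie algebra map $d\phi_w:\mathrm{Lie}(P_w)\to\mathrm{Lie}(A_w)=H^0(X(w),T_{X(w)})$; granted (1), part (2) reduces to showing $\ker\phi_w=1$ iff $w^{-1}(\alpha_0)<0$.

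For (1), the plan is to work from the short exact sequence
$$0 \to T_{X(w)} \to T_{G/B}\big|_{X(w)} \to N_{X(w)/(G/B)} \to 0$$
(valid because $X(w)$ is smooth) and its long exact sequence in cohomology. I would exploit the $B$-equivariant filtration of $T_{G/B}=G\times^B(\mathfrak{g}/\mathfrak{b})$ whose associated graded is $\bigoplus_{\alpha\in\Phi^+}\mathcal{L}(-\alpha)$; the cohomology of each restricted line bundle $\mathcal{L}(-\alpha)|_{X(w)}$ is handled by Demazure's character formula and Kempf vanishing. A weight-by-weight comparison should then match $H^0(X(w),T_{X(w)})$ with the image of $\mathrm{Lie}(P_w)=\mathfrak{b}\oplus\bigoplus_{\alpha\in\Phi_L^+}\mathfrak{g}_{-\alpha}$, where $L$ is the Levi factor of $P_w$, giving surjectivity of $d\phi_w$.

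For (2), the key geometric input is a calculation with the highest-root subgroup $U_{\alpha_0}$. Being central in the unipotent radical $U$ of $B$ and normalized by $B$, a direct computation on the open Bruhat cell $BwB/B$ shows that $U_{\alpha_0}$ acts trivially on $X(w)$ if and only if $w^{-1}(\alpha_0)>0$, since for $g=b\dot{w}b'\in BwB$ the conjugate $g^{-1}U_{\alpha_0}g=(b')^{-1}U_{w^{-1}(\alpha_0)}b'$ lies in $B$ exactly under that condition. This immediately exhibits $U_{\alpha_0}\subseteq\ker\phi_w$ when $w^{-1}(\alpha_0)>0$, so $\phi_w$ is not an isomorphism in that case. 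Conversely, if $w^{-1}(\alpha_0)<0$ then $w$ cannot lie in any proper standard parabolic subgroup $W_I\subsetneq W$ (since $W_I$ fixes the fundamental coweight $\omega_i^\vee$ for any simple $\alpha_i\notin I$, forcing $w^{-1}(\alpha_0)$ to keep the positive $\omega_i^\vee$-coordinate $n_i$ of $\alpha_0$). Hence every simple reflection appears in a reduced expression of $w$, $T$ acts faithfully on $X(w)$, and $(\ker\phi_w)^0\subseteq R_u(P_w)$. Its Lie algebra is then a $P_w$-stable $T$-weight subspace of $\bigoplus_{\alpha\in\Phi^+\setminus\Phi_L^+}\mathfrak{g}_\alpha$, corresponding to an upward-closed subset $S\subseteq\Phi^+\setminus\Phi_L^+$. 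Since $G$ is simple, any positive root can be enlarged to $\alpha_0$ by successively adding simple roots that remain roots, so any nonempty such $S$ contains $\alpha_0$; but $\alpha_0\in S$ is precisely the excluded case. Hence $S=\emptyset$, $(\ker\phi_w)^0=1$, and in characteristic zero this forces $\ker\phi_w=1$.

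The hardest step will be the cohomological input for (1): controlling $H^0$ and $H^1$ of each $\mathcal{L}(-\alpha)|_{X(w)}$ and identifying the connecting map into $H^0(X(w),N)$ uses the full strength of Demazure's vanishing theorems on smooth Schubert varieties, and the simply laced hypothesis enters here to avoid the anomalies introduced by short roots in other types. A secondary delicate point is verifying that the upward-closed subset $S$ in part (2) really is $P_w$-stable and that the argument passing from $(\ker\phi_w)^0=1$ to $\ker\phi_w=1$ via the component group goes through; both should reduce to standard facts about parabolic subgroups and connected algebraic groups in characteristic zero, but they need careful treatment.
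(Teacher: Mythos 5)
Your part (1) is where the proposal has a genuine gap, and it is exactly at the hard step. First, the cohomology of the line bundles occurring in the filtration of $T_{G/B}$ restricted to $X(w)$ is \emph{not} handled by Demazure's character formula and Kempf vanishing: the weights of $\mathfrak{g}/\mathfrak{b}$ are roots, which are in general not dominant, so those theorems say nothing about $H^{i}(X(w),\mathcal{L}(\alpha))$ for a root $\alpha$. The required vanishing $H^{i}(X(w),\mathcal{L}(\alpha))=0$ for $i\geq 1$ and $\alpha$ a positive root is precisely the paper's Corollary 3.6, proved by induction along BSDH fibrations using the classification of indecomposable $B_{\gamma}$-summands of $H^{0}(w,V)$ for $B$-submodules $\mathfrak{b}\subseteq V\subseteq\mathfrak{g}$ (Lemmas 3.2--3.5); this is where the simply laced hypothesis really enters (via $\langle\beta,\gamma\rangle\in\{-1,0,1\}$), and the corresponding $H^{1}$ can be nonzero in non simply laced types (Lemma 4.2). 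Second, even granting the vanishing, your ``weight-by-weight comparison'' asserting that $H^{0}(X(w),T_{X(w)})$ matches the image of $\mathfrak{p}_{w}$ is an assertion of the crux, not a proof: the normal-bundle sequence would require control of $H^{0}(X(w),N_{X(w)/(G/B)})$ or of the map onto it, which you do not supply, and nothing in your sketch excludes a global vector field on $X(w)$ of weight a simple root $\alpha$ with $\mathfrak{g}_{\alpha}\not\subseteq\mathfrak{p}_{w}$. The paper handles this by (i) proving the restriction $\mathfrak{g}=H^{0}(G/B,T_{G/B})\rightarrow H^{0}(X(w),T_{G/B}\vert_{X(w)})$ is surjective (Lemma 3.5(3)), (ii) showing the preimage $\mathfrak{q}$ of $H^{0}(X(w),T_{X(w)})$ is a parabolic subalgebra on which the restriction is a Lie algebra homomorphism (a Chevalley-basis computation again using simply lacedness), and (iii) forcing $\mathfrak{q}=\mathfrak{p}_{w}$ by a $T$-fixed point and length argument ($\mathfrak{g}_{\alpha}\subseteq\mathfrak{q}$ gives $s_{\alpha}wB/B\in X(w)$, hence $U_{\alpha}\subseteq P_{w}$). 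Your plan contains no substitute for any of these steps, so (1), and with it the ``if'' implication of (2), is not established.

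By contrast, your kernel analysis in (2) is essentially sound and follows a genuinely different route from the paper's: the paper proves injectivity when $w^{-1}(\alpha_{0})<0$ cohomologically ($H^{0}(w,\mathfrak{b})=0$, uniqueness of the $B$-stable line $\mathfrak{g}_{-\alpha_{0}}$, separability), whereas your argument --- the highest-root root subgroup acts trivially iff $w^{-1}(\alpha_{0})>0$ by the commutation/conjugation computation on the big cell, then full support of $w$, kernel unipotent, its Lie algebra an upward-closed set of roots which would have to contain $\alpha_{0}$ --- is closer in spirit to the paper's Corollary 3.9 and does work. Two points need care: the paper's $B$ is the Borel opposite to $B^{+}$, so in its conventions the trivially acting subgroup is $U_{-\alpha_{0}}$ (central in the unipotent radical of $B$), not $U^{+}_{\alpha_{0}}$; and to pass from a finite kernel to a trivial one you should say that a finite normal subgroup of the connected group $P_{w}$ is central and $C_{G}(B)=Z(G)=1$ since $G$ is adjoint, rather than appeal to characteristic zero. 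Since the isomorphism statement in (2) still requires the surjectivity from (1), the proposal as a whole remains incomplete.
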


 \begin{theorem} ( see Theorem 6.6 )

Assume that $G$ is a simple algebraic group of adjoint type which is not simply laced. Then,
$\phi_{w}:P _{w}\longrightarrow A_{w}$ is injective if and only if  
$w^{-1}(\alpha_{0})$ is a negative root.
\end{theorem}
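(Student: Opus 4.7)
The plan is to analyze the kernel $K_{w}$ of $\phi_{w}$ and show that $K_{w} = 1$ is equivalent to $w^{-1}(\alpha_{0})$ being negative. Since every element of $P_{w}$ acting trivially on $X(w)$ must fix $eB$, we have $K_{w} \subseteq B$, and the Levi decomposition gives $K_{w} = (K_{w}\cap T)(K_{w}\cap U)$. A direct computation on the open Schubert cell $C(w) \cong \prod_{\alpha \in R^{+}(w^{-1})} U_{-\alpha} \cdot wB$, using that $t \cdot u_{-\alpha}(c) wB = u_{-\alpha}(\alpha(t)^{-1}c) wB$, shows $K_{w} \cap T = \bigcap_{\alpha \in R^{+}(w^{-1})} \ker(\alpha)$, which equals $T(w)$ because the $\mathbb{Z}$-span of $R^{+}(w^{-1})$ coincides with that of $J_{w}$ in the character lattice. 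For the unipotent part, every element of $K_{w}$ must fix each $T$-fixed point $vB$ for $v \leq w$, and since $U_{-\alpha} \subseteq vBv^{-1}$ iff $\alpha \notin R^{+}(v^{-1})$, we get $K_{w} \cap U \subseteq \bigcap_{v \leq w}(U \cap vBv^{-1}) = U_{\leq w}$. None of these identifications rely on simple-lacedness.

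For the direction $(\Leftarrow)$, suppose $w^{-1}(\alpha_{0}) < 0$. The combinatorial analysis underlying Theorem 3.8, being a statement purely about the Weyl group $W$ and the root system $R$, carries over unchanged: it gives $J_{w} = S$ and $\bigcup_{v \leq w} R^{+}(v^{-1}) = R^{+}$, and hence $T(w) = 1$ (using that $G$ is adjoint) and $U_{\leq w} = 1$. Combined with the inclusions above, $K_{w} = 1$, so $\phi_{w}$ is injective.

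For the converse I argue the contrapositive: if $w^{-1}(\alpha_{0}) > 0$, then the root subgroup $U_{-\alpha_{0}}$ lies in $K_{w}$. The crucial fact is that $U_{-\alpha_{0}}$ is central in $U$: since $\alpha_{0}$ is the highest root, $\alpha_{0} + \beta$ is never a root for $\beta \in R^{+}$, so the Chevalley commutator formula forces $[U_{-\alpha_{0}}, U_{-\beta}] = 1$ for every $\beta \in R^{+}$, with no dependence on the type of $G$. From $w^{-1}(\alpha_{0}) > 0$ we deduce $U_{-\alpha_{0}} \subseteq wBw^{-1}$, so $U_{-\alpha_{0}}$ fixes $wB$; centrality in $U$ then gives $u_{-\alpha_{0}}(s)\, u\, wB = u\, u_{-\alpha_{0}}(s)\, wB = u\, wB$ for every $u$ in $\prod_{\beta \in R^{+}(w^{-1})} U_{-\beta}$. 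Hence $U_{-\alpha_{0}}$ acts trivially on $C(w)$, and by density on $X(w)$, so $U_{-\alpha_{0}} \subseteq K_{w}$ and $\phi_{w}$ fails to be injective.

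The step I expect to be the main obstacle is the combinatorial implication used in $(\Leftarrow)$, namely that $w^{-1}(\alpha_{0}) < 0$ forces $\bigcup_{v \leq w} R^{+}(v^{-1}) = R^{+}$: one must check that every positive root is inverted by some $v \leq w$. The natural route uses that the set $\{w \in W : w^{-1}(\alpha_{0}) < 0\}$ is upward-closed in the Bruhat order, together with a descent through the root poset starting from $\alpha_{0}$; this is a global argument that does not really simplify in the non simply-laced case. All other pieces, and in particular the centrality argument used in $(\Rightarrow)$, transfer from the simply-laced setting without modification.
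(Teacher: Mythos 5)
Your forward direction (the contrapositive: if $w^{-1}(\alpha_{0})>0$ then $U_{-\alpha_{0}}\subseteq K_{w}$, using that $U_{-\alpha_{0}}$ is central in $U$ and fixes $wB$) is correct, and it is in fact a more elementary argument than the paper's, which deduces non-injectivity from the vanishing of the $-\alpha_{0}$-weight space of $H^{0}(w,\mathfrak{g}/\mathfrak{b})$ via separability and the identification of $\mathrm{Lie}(A_{w})$ with $H^{0}(X(w),T_{w})$.

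The other direction, however, has a genuine gap. Your argument reduces injectivity to the combinatorial claim that $w^{-1}(\alpha_{0})<0$ forces $\bigcup_{v\leq w}R^{+}(v^{-1})=R^{+}$, hence $U_{\leq w}=1$; you flag this as the delicate step but assert it ``carries over unchanged'' to the non simply laced case. It does not: it is false there, and the paper's own Remark after Corollary 3.9 records the counterexample. Take $G$ of type $B_{2}$ with $\langle\alpha_{1},\alpha_{2}\rangle=-2$, $\langle\alpha_{2},\alpha_{1}\rangle=-1$, and $w=s_{2}s_{1}$ (so $X(w)$ is smooth). Then $w^{-1}(\alpha_{0})=s_{1}s_{2}(\alpha_{1}+2\alpha_{2})=-\alpha_{1}<0$, yet $\beta=\alpha_{1}+\alpha_{2}$ lies in $R^{+}\setminus\bigcup_{v\leq w}R^{+}(v^{-1})$, so $U_{\leq w}\supseteq U_{-\beta}\neq 1$. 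Thus the fixed-point containment $K_{w}\subseteq T(w)\cdot U_{\leq w}$ (and your claimed decomposition $K_{w}=(K_{w}\cap T)(K_{w}\cap U)$, which you also do not justify) cannot yield $K_{w}=1$: one must show directly that such root subgroups $U_{-\beta}$ move points of $X(w)$, even though they fix every $T$-fixed point $vB$, $v\leq w$. This is exactly where the theorem becomes nontrivial in the non simply laced case, and it is why the paper does not argue combinatorially: it proves injectivity by showing the Lie algebra map $\psi_{w}:\mathfrak{p}_{w}\to H^{0}(X(w),T_{w})$ has zero kernel, using that its kernel is a $B$-submodule of $\mathfrak{g}$ which would have to contain the unique $B$-stable line $\mathfrak{g}_{-\alpha_{0}}$, while $w^{-1}(\alpha_{0})<0$ guarantees (via the Section 4 cohomology analysis behind Theorem 4.11, i.e.\ $H^{0}(w,\mathfrak{b})=0$) that $\mathfrak{g}_{-\alpha_{0}}$ survives in $H^{0}(X(w),T_{w})$; separability over $\mathbb{C}$ then transfers injectivity from $\psi_{w}$ to $\phi_{w}$. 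Your proposal is missing any substitute for this step.
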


Recall that the vanishing results of the cohomology groups of the restrictions of the homogeneous vector bundles to Schubert varieties have been an important area of research in the theory of algebraic groups ( see [1], [3],  [4], [6], [11], [14], [16] and [18] ).

In this paper, we prove the following vanishing results of the cohomology groups:

By abuse of notation, we denote  by $T_{G/B}$ also the restriction of the tangent bundle of 
$G/B$ to $X(w)$.  \begin{theorem} ( see Theorem 4.1 )

Assume that $G$ is a simple, simply laced algebraic group of adjoint type over $\mathbb{C}$. 
Then we have 
\begin{myenumerate}
\item
$H^{i}(X(w), T_{G/B})=0$ for every 
$i\geq 1$. 
\item
$H^{0}(X(w) , T_{G/B})$ is the adjoint representation 
$\mathfrak{g}$ of  $G$ if and only if  
$w^{-1}(\alpha_{0})$ is a negative root.
\end{myenumerate}
\end{theorem}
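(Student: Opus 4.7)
The plan is to compute $H^*(X(w), T_{G/B})$ via a filtration of the tangent bundle by homogeneous line bundles. As a $B$-module, $\mathfrak{g}/\mathfrak{b}$ admits a $B$-stable filtration $0 = V_0 \subset V_1 \subset \cdots \subset V_N = \mathfrak{g}/\mathfrak{b}$ with one-dimensional graded pieces $V_i/V_{i-1} \cong \mathbb{C}_{\beta_i}$ for an enumeration $\beta_1, \ldots, \beta_N$ of $R^+$; since $\alpha_0$ is maximal in the height order, we arrange $\beta_N = \alpha_0$. This descends to a filtration of $T_{G/B}|_{X(w)}$ by subbundles whose successive quotients are the line bundles $L_{\beta_i}$ restricted to $X(w)$. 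The long exact cohomology sequences of this filtration reduce the theorem to computing $H^*(X(w), L_\beta)$ for each $\beta \in R^+$.

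For part (1), I would prove $H^i(X(w), L_\beta) = 0$ for every $i \geq 1$ and every $\beta \in R^+$ by induction on $\ell(w)$. Choose a simple reflection $s_\alpha$ with $\ell(s_\alpha w) = \ell(w) - 1$; then $X(w) \cong P_\alpha \times^B X(s_\alpha w)$ carries a $\mathbb{P}^1$-fibration $\pi : X(w) \to P_\alpha/B$. The Leray spectral sequence together with the inductive hypothesis reduces the question to the cohomology on $\mathbb{P}^1$ of the $P_\alpha$-equivariant bundle $\pi_* L_\beta$ associated to the $B$-module $H^0(X(s_\alpha w), L_\beta)$. Its $H^1$ vanishes provided every weight $\mu$ of this fiber satisfies $\langle \mu, \alpha^\vee \rangle \geq -1$, which the simply-laced hypothesis supplies: every root-string has length at most two, so $\langle \mu, \alpha^\vee \rangle \in \{-1, 0, 1, 2\}$ for every root (or zero) weight $\mu$ and every simple $\alpha$, and an inductive check shows this bound persists across the Demazure modules $H^0(X(s_\alpha w), L_\beta)$. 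Applying this in the filtration exact sequences gives (1).

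For (2), the vanishing in (1) gives $\operatorname{char} H^0(X(w), L_\beta) = \Delta_w(e^\beta)$ for each $\beta \in R^+$, where $\Delta_w$ is the Demazure operator; summing yields $\operatorname{char} H^0(X(w), T_{G/B}) = \sum_{\beta \in R^+} \Delta_w(e^\beta)$, to be compared with $\operatorname{char} \mathfrak{g} = \ell + \sum_{\alpha \in R} e^\alpha$ (rank $\ell$). The decisive contribution is that of $\beta = \alpha_0$: in the simply-laced case $\alpha_0$ is the highest weight of the adjoint representation, so $\mathfrak{g} = V(\alpha_0)$, and a reduced-expression analysis of $\Delta_w(e^{\alpha_0})$ shows that this Demazure character first acquires the zero weight (the Cartan contribution) precisely when $w^{-1}(\alpha_0) \in R^-$. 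Combining with the contributions from the remaining $\beta$'s, the total $\sum_{\beta \in R^+} \Delta_w(e^\beta)$ matches $\operatorname{char} \mathfrak{g}$ exactly in that case, and strictly overshoots $\operatorname{char} \mathfrak{g}$ otherwise, establishing the equivalence.

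The main obstacle is the final character bookkeeping: one must verify that $\sum_{\beta \in R^+} \Delta_w(e^\beta) = \operatorname{char} \mathfrak{g}$ holds with exact multiplicities under $w^{-1}(\alpha_0) \in R^-$ (and fails by a controlled excess otherwise), which requires a careful weight-by-weight tracking of each $\Delta_{s_i}$ along a reduced expression for $w$. The simply-laced hypothesis is essential throughout -- both for the root-string bound underlying (1) and for the identification $V(\alpha_0) = \mathfrak{g}$ that makes the character-matching in (2) meaningful; in the non-simply-laced case either the higher cohomology vanishing or the character identity fails, which is precisely why the conclusion of Theorem 3.7 does not extend without modification.
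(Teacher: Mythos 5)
Your overall architecture (filter $T_{G/B}$ by the line bundles $\mathcal{L}_{\beta}$, $\beta\in R^{+}$, kill higher cohomology by induction along $\mathbb{P}^1$-fibrations, then compare $H^{0}$ with $\mathfrak{g}$) is the same skeleton as the paper's, but the step that carries all the weight in part (1) is wrong as stated. You claim every weight $\mu$ of the fiber module $H^{0}(X(s_{\alpha}w),\mathcal{L}_{\beta})$ satisfies $\langle \mu,\alpha^{\vee}\rangle\geq -1$ because simply-lacedness forces $\langle\mu,\alpha^{\vee}\rangle\in\{-1,0,1,2\}$; this excludes $\mu=-\alpha$, which does occur. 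Already $H^{0}(s_{\alpha},\mathbb{C}_{\alpha})$ has weights $\alpha,0,-\alpha$, and, for instance, in type $A_{2}$ with simple roots $\alpha,\gamma$ the module $H^{0}(s_{\gamma}s_{\alpha},\mathbb{C}_{\alpha})$ has weights $0,-\alpha,-\alpha-\gamma$, so at the next step of the induction (outer root $\alpha$, i.e.\ $w=s_{\alpha}s_{\gamma}s_{\alpha}$) the fiber contains a weight pairing to $-2$ with $\alpha$. The vanishing of $H^{1}$ is still true, but not for weight reasons: it holds because whenever $-\alpha$ occurs it sits inside a larger indecomposable $B_{\alpha}$-summand (either $sl_{2,\alpha}$ or a two-dimensional module of the form $V_{1}\otimes\mathbb{C}_{-\omega_{\alpha}}$ with a zero weight on top), whose $H^{1}$ on $P_{\alpha}/B$ vanishes by Lemma 2.3. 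Proving that this summand structure persists under the operations $H^{0}(s_{\alpha},-)$ is exactly the content of the paper's Lemma 3.3 (resting on Lemmas 2.3, 2.4, 3.1, 3.2), and it is the technical heart of the argument; an "inductive check" of a weight bound that is in fact false cannot substitute for it.

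Part (2) is also not yet a proof. The clean mechanism is structural, not character bookkeeping: since $H^{1}(w,\mathfrak{b})=0$ (Lemma 3.4), the map $\mathfrak{g}=H^{0}(w,\mathfrak{g})\longrightarrow H^{0}(w,\mathfrak{g}/\mathfrak{b})$ is surjective with kernel $H^{0}(w,\mathfrak{b})$, so $H^{0}(X(w),T_{G/B})$ is always a quotient of $\mathfrak{g}$; your claim that its character "strictly overshoots" that of $\mathfrak{g}$ when $w^{-1}(\alpha_{0})>0$ has the inequality backwards. The equivalence is then read off the $-\alpha_{0}$-weight space, not the zero weight: because $\mathfrak{g}=V(\alpha_{0})$ is irreducible in the simply laced case, its unique $B$-stable line is $\mathfrak{g}_{-\alpha_{0}}$, so the kernel is nonzero iff it contains $\mathfrak{g}_{-\alpha_{0}}$, and since every weight $\nu$ of $H^{0}(w,\beta)$ satisfies $\nu\geq w(\beta)$, one gets $H^{0}(w,\mathfrak{g}/\mathfrak{b})_{-\alpha_{0}}\neq 0$ iff $w(\beta)=-\alpha_{0}$ for some $\beta\in R^{+}$, i.e.\ iff $w^{-1}(\alpha_{0})<0$. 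Your alternative, matching $\sum_{\beta\in R^{+}}\Delta_{w}(e^{\beta})$ against the character of $\mathfrak{g}$ weight by weight, is precisely the bookkeeping you acknowledge not having done, and the zero weight receives contributions from many $\beta$, not only $\beta=\alpha_{0}$, so "when $\Delta_{w}(e^{\alpha_{0}})$ first acquires the zero weight" is not the right criterion. As it stands, both halves of the proposal have genuine gaps.
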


\begin{theorem} ( see Theorem 6.5 )

Assume that $G$ is a simple algebraic group of adjoint type over $\mathbb{C}$ which is not simply laced. Then we have
\begin{myenumerate}
\item
$H^{i}(X(w), T_{G/B})=0$ for every 
$i\geq 1$. 
\item
The adjoint representation $\mathfrak{g}$ of $G$ is a $B$-submodule of 
$H^{0}(X(w) , T_{G/B})$  if and 
only if $w^{-1}(\alpha_{0})$ is a negative root.
\end{myenumerate}
\end{theorem}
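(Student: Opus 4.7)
The plan is to reduce both parts to the computation of $H^*(X(w), L(\mathfrak{b}))$ through the $B$-module short exact sequence
\[
0 \longrightarrow \mathfrak{b} \longrightarrow \mathfrak{g} \longrightarrow \mathfrak{g}/\mathfrak{b} \longrightarrow 0.
\]
Since $\mathfrak{g}$ extends to a $G$-module, this yields on $X(w)$ the short exact sequence of associated vector bundles $0 \to L(\mathfrak{b}) \to \mathcal{O}_{X(w)} \otimes \mathfrak{g} \to T_{G/B}|_{X(w)} \to 0$. As Schubert varieties have rational singularities, $H^i(X(w), \mathcal{O}_{X(w)}) = 0$ for $i \geq 1$, so the long exact sequence in cohomology yields an isomorphism $H^i(X(w), T_{G/B}) \cong H^{i+1}(X(w), L(\mathfrak{b}))$ for $i \geq 1$ and identifies the kernel of the natural map $\mathfrak{g} \to H^0(X(w), T_{G/B})$ with $H^0(X(w), L(\mathfrak{b}))$.

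For part (1), I filter $\mathfrak{b}$ as a $B$-module by submodules whose one-dimensional successive quotients are of weight $0$ (coming from $\mathfrak{h}$) and of weight $-\alpha$ (coming from $\mathfrak{g}_{-\alpha}$, for $\alpha \in R^+$). Induction along this filtration using long exact sequences reduces the vanishing $H^j(X(w), L(\mathfrak{b})) = 0$ for $j \geq 2$ to the analogous vanishing for the line bundles $L(\mu)$ with $\mu \in \{0\} \cup \{-\alpha : \alpha \in R^+\}$. The case $\mu = 0$ is again rational singularities, and the case $\mu = -\alpha$ is handled via the Bott-Samelson resolution $\psi \colon Z_w \to X(w)$: since $R^i\psi_*\mathcal{O}_{Z_w} = 0$ for $i > 0$, the cohomology of $L(-\alpha)$ on $X(w)$ coincides with that of its pullback on the smooth tower $Z_w$, which can be evaluated via Leray using the iterated $\mathbb{P}^1$-bundle structure. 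The simply-laced hypothesis plays no role in this argument, so it should follow the same lines as in Theorem 3.7.

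For part (2), I identify $H^0(X(w), L(\mathfrak{b}))$ with the $B$-stable subspace $\bigcap_{gB \in X(w)} \mathrm{Ad}(g)(\mathfrak{b}) \subset \mathfrak{g}$. Every nonzero $B$-stable subspace of $\mathfrak{g}$ contains the lowest root vector $E_{-\alpha_0}$: for any nonzero weight vector it contains, iterated application of $\mathrm{ad}(E_{-\alpha})$ for $\alpha \in R^+$ descends through the root poset to $-\alpha_0$ by the irreducibility of the adjoint representation. Conversely, $\mathbb{C}\cdot E_{-\alpha_0}$ is itself $B$-stable because $[\mathfrak{n}^-, E_{-\alpha_0}] = 0$. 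Therefore $H^0(X(w), L(\mathfrak{b})) \neq 0$ if and only if $E_{-\alpha_0} \in \mathrm{Ad}(g)(\mathfrak{b})$ for every $gB \in X(w)$; since $\mathbb{C}\cdot E_{-\alpha_0}$ is $B$-invariant, it suffices to check this on the Weyl representatives $v \leq w$, giving the condition $v^{-1}(\alpha_0) \in R^+$ for all such $v$. Using dominance of $\alpha_0$ (so $\langle \alpha_0, \beta^\vee\rangle \geq 0$ for every $\beta \in R^+$), a direct calculation shows that on any Bruhat cover $v \lessdot v'$, whether by left- or right-reflection, the condition $v^{-1}(\alpha_0) \in R^-$ forces $v'^{-1}(\alpha_0) \in R^-$; hence $\{v : v^{-1}(\alpha_0) \in R^-\}$ is upward-closed in Bruhat, and requiring $v^{-1}(\alpha_0) \in R^+$ for all $v \leq w$ reduces to requiring $w^{-1}(\alpha_0) \in R^+$. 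This yields $H^0(X(w), L(\mathfrak{b})) = 0$ iff $w^{-1}(\alpha_0) \in R^-$, establishing the "if and only if".

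The principal technical challenge is the line-bundle vanishing in part (1): because $-\alpha$ is not dominant, standard Kempf-type results do not apply, and one must carefully execute the Bott-Samelson computation tracking Demazure operators. The injectivity analysis in part (2) is purely combinatorial and uniform across the simply laced and non-simply laced settings; the weakening from "$H^0 = \mathfrak{g}$" of Theorem 3.7 to merely "$\mathfrak{g}$ is a submodule of $H^0$" here reflects the fact that $H^1(X(w), L(\mathfrak{b}))$ — the cokernel of the natural map $\mathfrak{g} \to H^0(X(w), T_{G/B})$ — may fail to vanish in the non-simply laced setting, producing additional global vector fields not arising from the $G$-action.
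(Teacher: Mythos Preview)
Your overall architecture for part~(1) matches the paper's: both use the short exact sequence $0 \to \mathfrak{b} \to \mathfrak{g} \to \mathfrak{g}/\mathfrak{b} \to 0$, the triviality of $\mathcal{L}(\mathfrak{g})$, and the reduction to showing $H^{j}(w,\mathfrak{b}) = 0$ for $j \geq 2$. But that last vanishing is the entire content of the theorem, and you have only gestured at it. Your proposed line-bundle filtration requires $H^{j}(w,-\alpha)=0$ for $j\geq 2$ and every $\alpha\in R^{+}$; this is not a consequence of any standard vanishing theorem, and the remark that ``the simply-laced hypothesis plays no role'' is misleading. The paper's actual argument (Lemma~4.8) does \emph{not} reduce to line bundles. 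It runs an induction on $l(w)$ via the descending exact sequence of Lemma~4.7, and the inductive step requires knowing the possible weights of $H^{1}(s_{\gamma},V)$ for $V$ a $B$-submodule of $\mathfrak{g}$: these are shown to be either $0$ or short roots not in $-S$, after which Corollaries~4.4--4.6 (which in turn rest on the $B_{\gamma}$-summand analysis of Lemma~4.3 inside the \emph{short-root} irreducible module $V(\beta_{0})$) dispose of them. Your filtration by one-dimensional quotients loses exactly the $B_{\gamma}$-module structure that makes this work; to prove your line-bundle vanishing you would be forced to rebuild the same machinery.

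Your argument for part~(2), by contrast, is correct and genuinely different from the paper's. The paper works on the quotient side: it filters $\mathfrak{g}/\mathfrak{b}$ by the $\mathbb{C}_{\alpha}$ for $\alpha\in R^{+}$, uses the vanishing of $H^{1}(w,\alpha)$ (Corollary~3.6/4.10) to pass to $H^{0}$, and observes that the lowest weight of $H^{0}(w,\alpha)$ is $w(\alpha)$, so $H^{0}(w,\mathfrak{g}/\mathfrak{b})_{-\alpha_{0}}\neq 0$ iff $w(\alpha)=-\alpha_{0}$ for some $\alpha>0$. You instead work on the sub side: the identification $H^{0}(w,\mathfrak{b}) = \bigcap_{gB\in X(w)}\mathrm{Ad}(g)\mathfrak{b}$, reduction to $T$-fixed points via $B$-invariance of $\mathbb{C}E_{-\alpha_{0}}$, and the Bruhat monotonicity argument using dominance of $\alpha_{0}$. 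One small remark: you only need to check \emph{left} covers $v\lessdot v'=s_{\beta}v$ (every cover is one), where your computation $v'^{-1}(\alpha_{0})=v^{-1}(\alpha_{0})-\langle\alpha_{0},\beta^{\vee}\rangle\, v^{-1}(\beta)$ with $v^{-1}(\beta)>0$ cleanly gives the result; the direct check for right covers is messier and unnecessary. Your route is more geometric and avoids the higher-cohomology input the paper uses for this step.
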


The results in this paper play an important role in the study of the connected component, 
containing the identity automorphism of the group of all automorphisms of a 
Bott-Samelson-Demazure-Hansen variety ( see [5]  ).
 
The organisation of the paper is as follows:

Section 2 consists of preliminaries from [4], [9], [10] and [11]. 

The strategy of the proof of the results in this paper uses the induction on the  
dimension of Schubert varieties, using their Bott-Samelson-Demazure-Hansen desingularisations and the structure of the indecomposable representations of a Borel subgroup of $SL(2, \mathbb{C})$ (see [2. p.130, Corollary 9.1]). 

In section 3 and section 4,  we assume that $G$ is simply laced. 
We first  describe the $B$-module of the global sections of the homogeneous vector bundles associated to all those $B$-submodules $V$ of the adjoint representation of $G$ which contain the adjoint representation of $B$ ( see Lemma 3.3 ). Using this result, we prove that all higher cohomology groups $H^{i}(X(w), \mathcal{L}(V))$ vanish for any such $B$-module 
$V,$ where $\mathcal{L}(V)$ is the homogeneous vector bundle associated to $V$ ( see Lemma 3.4 ).
In section 4,  we prove the main results in the simply laced case 
applying Lemma 3.4 to a long exact sequence of cohomology groups associated to a certain short exact sequence of $B$-modules. In section 5, we  prove some vanishing results in the non simply laced case. These results are similar to those in section 3. In section 6, we prove the main results in the non simply laced  case. The proofs of these results are  similar to those of the main results in the simply laced case.

\section{Notation  and Preliminaries }
In this section, we set up some notation and preliminaries.
We refer to [9] and [10] for preliminaries in Lie algebras and algebraic groups.
Let $G$ be a simple algebraic group of adjoint type over $\mathbb{C}$ and $T$ a maximal torus of
 $G$.  Let $W=N_{G}(T)/T$ denote the Weyl group of $G$ with respect to $T$ and we denote 
  the set of roots of $G$ with respect to $T$ by $R$. Let $B^{+}$ be a  Borel subgroup of $G$ 
  containing $T$. Let $B$ be the Borel subgroup of $G$ opposite to $B^{+}$ determined by $T$. 
  That is, $B=n_{0}B^{+}n_{0}^{-1}$, where $n_{0}$ is a representative in $N_{G}(T)$ of the longest     element $w_{0}$ of $W$. Let  $R^{+}\subset R$ be 
  the set of positive roots of $G$ with respect to the Borel subgroup $B^{+}$. Note that the set of 
  roots of $B$ is equal to the set $R^{-} :=-R^{+}$ of negative roots.

Let $S = \{\alpha_1,\ldots,\alpha_n\}$ denote the set of simple roots in
$R^{+}.$ For $\beta \in R^{+},$ we also use the notation $\beta > 0$.  
The  reflection in  $W$ corresponding to a root $\alpha$ ( respectively, a simple root $\alpha_i$ ) 
is denoted by $s_{\alpha}$ ( respectively, $s_{i}$ ). Let $\mathfrak{g}$ be the Lie algebra of $G$. 
Let $\mathfrak{h}\subset \mathfrak{g}$ be the Lie algebra of $T$ and  $\mathfrak{b}\subset \mathfrak{g}$ be the Lie algebra of $B$. Let $X(T)$ denote the group of all characters of $T$. 
We have $X(T)\otimes \mathbb{R}=Hom_{\mathbb{R}}(\mathfrak{h}_{\mathbb{R}}, \mathbb{R})$, the dual of the real form of $\mathfrak{h}$. The positive definite 
$W$-invariant form on $Hom_{\mathbb{R}}(\mathfrak{h}_{\mathbb{R}}, \mathbb{R})$ 
induced by the Killing form of $\mathfrak{g}$ is denoted by $(~,~)$. 
We use the notation $\left< ~,~ \right>$ to
denote $\langle \mu, \alpha \rangle  = \frac{2(\mu,
\alpha)}{(\alpha,\alpha)}$,  for every  $\mu\in X(T)\otimes \mathbb{R}$ and $\alpha\in R$.  
Let $\{x_{\alpha}, ~ h_{\beta}: \alpha \in R,  ~  \beta \in S\}$ denote the Chevalley basis of $\mathfrak{g}$
corresponding to $R.$ For $\alpha\in R$, we denote by 
$\mathfrak{g}_{\alpha}$ the one dimensional root subspace of $\mathfrak{g}$ spanned by $x_{\alpha}$. Let $sl_{2,\alpha}$ denote the $3$ dimensional 
Lie subalgebra of $\mathfrak{g}$ generated by $x_{\alpha}$ and $x_{-\alpha}$. 
Let $\leq$ denote the partial order on $X(T)$ given by $\mu\leq \lambda$
if $\lambda-\mu$ is a non negative integral linear combination of simple 
roots. We denote by $X(T)^+$ the set of dominant characters of 
$T$ with respect to $B^{+}$. Let $\rho$ denote the half sum of all 
positive roots of $G$ with respect to $T$ and $B^{+}.$
For any simple root $\alpha$, we denote the fundamental weight
corresponding to $\alpha$  by $\omega_{\alpha}.$ 

For $w \in W,$ let $l(w)$ denote the length of $w$. We define the 
dot action of $W$ on $X(T)\otimes \mathbb{R}$ by $$w\circ\lambda=w(\lambda+\rho)-\rho,  ~  where  ~ w\in W  ~ and ~ \lambda\in X(T)\otimes \mathbb{R}.$$ 
 We set $R^{+}(w):=\{\beta\in R^{+}:w(\beta)\in -R^{+}\}$.  For $w \in W$, 
let $X(w):=\overline{BwB/B}$ denote the Schubert variety in $G/B$ 
corresponding to $w$.

For a simple root $\alpha$, we denote by $P_{\alpha}$ the minimal parabolic subgroup of $G$  generated by $B$ and $n_{\alpha}$, where $n_{\alpha}$ is a representative of $s_{\alpha}$ in $N_{G}(T)$. Let $L_{\alpha}$ denote the Levi subgroup of $P_{\alpha}$
containing $T$. Note that $L_{\alpha}$ is the product of $T$ and the homomorphic image 
$G_{\alpha}$ of $SL(2, \mathbb{C})$ via a homomorphism $\psi:SL(2, \mathbb{C})\longrightarrow L_{\alpha}$ ( see  [11, II, 1.3] ). We denote the intersection of $L_{\alpha}$ and $B$ by $B_{\alpha}$.  
We note that the morphism $L_{\alpha}/B_{\alpha}\hookrightarrow P_{\alpha}/B$  induced by the inclusion $L_{\alpha}\hookrightarrow P_{\alpha}$ is an isomorphism. Therefore,  to compute the cohomology modules $H^{i}(P_{\alpha}/B, \mathcal{L}(V))$ ( $0\leq i \leq 1$ ) for any $B$-module 
$V,$ we treat $V$ as a $B_{\alpha}$-module  and we compute 
$H^{i}(L_{\alpha}/B_{\alpha}, \mathcal{L}(V))$. 

Now,  we recall some preliminaries on Bott-Samelson-Demazure-Hansen varieties 
which we call  BSDH-varieties and some application of Leray spectral 
sequence to compute the cohomology of line bundles on 
Schubert varieties.  We refer to [4] and  [11] for preliminaries related to BSDH varieties.
We refer to [19] for spectral sequences.

We recall that the BSDH-variety corresponding to a reduced expression $\underline i=(i_{1}, i_{2}, \cdots , i_{r} )$ of $w=s_{{i_1}}s_{{i_2}}\cdots s_{{i_r}}$  is defined by \[
Z(w, \underline i) = \frac {P_{\alpha_{i_{1}}} \times P_{\alpha_{i_{2}}} \times \cdots \times 
P_{\alpha_{i_{r}}}}{B \times \cdots
\times B},
\] where the action of $B \times \cdots \times B$ on $P_{\alpha_{i_{1}}} \times P_{\alpha_{i_{2}}}
\times \cdots \times P_{\alpha_{i_{r}}}$ is given by $(p_1, \ldots , p_r)(b_1, \ldots
, b_r) = (p_1 \cdot b_1, b_1^{-1} \cdot p_2 \cdot b_2, \ldots
,b^{-1}_{r-1} \cdot p_r \cdot b_r)$, $ p_j \in P_{\alpha_{i_{j}}}$, $b_j \in B$ and 
$\underline i=(i_1, i_2, \ldots, i_r)$ (see [4, p.64, Definition 2.2.1] ).

We note that for each reduced expression $\underline i$ of $w$, $Z(w, \underline i)$ is a 
smooth projective 
variety. We denote both the natural birational surjective morphism
from $ Z(w, \underline i)$ to $X(w)$ and the composition map $Z(w, \underline i)
\longrightarrow X(w)\hookrightarrow G/B$ by $\phi_w$. We denote the restriction of the homogeneous vector bundle $\mathcal{L}(V)$ to $X(w)$ by  ${\mathcal L}(w, V)$  and its pull back to 
$Z(w, \underline i)$ via the birational morphism $\phi_{w}$ by ${\mathcal L}(w, \underline i, V)$.  

Let $f_r : Z(w, \underline i) \longrightarrow Z(ws_{i_r},
\underline i')$ denote the map induced by the
projection $$P_{\alpha_{i_1}} \times P_{\alpha_{i_2}} \times \cdots \times 
P_{\alpha_{i_r}} \longrightarrow P_{\alpha_{i_1}} \times P_{\alpha_{i_2}}
\times \cdots \times P_{\alpha_{i_{r-1}}},$$ where $\underline i'=(i_1,i_2,\ldots, i_{r-1})$.
 We note that $f_r$ is a 
$P_{\alpha_{i_r}}/B \simeq {\mathbb P}^{1}$-fibration.

Then for $j\geq 0$, we have the following isomorphism of
$B$-linearized sheaves ( see  [11, II, p.366, section 14.1, equation (4)]  and  [8, Theorem 12.11,  p.290] ):

\[
R^{j}{f_{r}}_{*}{\mathcal L}(w, \underline i, V) = {\mathcal L}({ws_{{i_r}}},  \underline i' , 
H^{j}(P_{\alpha_{i_r}}/B, {\mathcal L}(s_{i_r}, V))).
 \hspace{1.5cm}(Iso) \]

We use the following {\em ascending 1-step construction} as a basic tool 
in computing cohomology modules.

Let $\gamma$ be a simple root such that $l(w) = l(s_{\gamma}w) +1$. Let $Z(w, \underline i)$
be a BSDH-variety corresponding to a reduced
expression $w=s_{{i_{1}}}s_{{i_{2}}}\cdots s_{{i_{r}}}$,
where $\alpha_{i_{1}}=\gamma$. Then we have an induced morphism

\[
g: Z(w, \underline i) \longrightarrow P_{\gamma}/B \simeq {\mathbb P}^{1},
\]
with fibres $Z(s_{\gamma}w, \underline i'')$, where $\underline i''=(i_2,i_3, \ldots, i_r)$.
We note that $P_{\gamma}$ acts on both $Z(w, \underline i)$ and on $P_{\gamma}/B$ 
by the left and that the map $g: Z(w, \underline i) \longrightarrow P_{\gamma}/B$ is $P_{\gamma}$-equivariant.

By an application of the Leray spectral sequence together with the fact
that the base is ${\mathbb P}^{1},$ for every $B$-module $V,$ we obtain the 
following exact sequence of $P_{\gamma}$-modules:
$$
0 \to H^{1}(P_{\gamma}/B, R^{j-1}{g}_{*}{\mathcal L}(w, \underline i , V)) 
\to 
H^{j}(Z(w, \underline i) , {\mathcal L}(w, \underline i, V)) \to
H^{0}(P_{\gamma}/B , R^{j}{g}_{*}{\mathcal L}(w, \underline i, V) ) \to 0.
$$

Since for any $B$-module $V,$ the vector bundle ${\mathcal L}(w, \underline i, V)$
on $Z(w, \underline i)$ is the pull back of the homogeneous vector 
bundle ${\mathcal L}(w, V)$ from $X(w)$, we conclude that the cohomology modules
$$H^{j}(Z(w, \underline i) ,~{\mathcal L}(w, \underline i, V))\cong H^{j}(X(w),
~{\mathcal L}(w,V))$$ (see  [4, Theorem 3.3.4 (b)] ), and are independent
 of the choice of the reduced expression $\underline i$. Hence we denote
$H^{j}(Z(w, \underline i) ,~{\mathcal L}(w, \underline i, V))$ by $H^j(w,V)$. 
For a character $\lambda$ of $B$, we denote the one dimensional $B$-module corresponding to $\lambda$ by $\mathbb C_{\lambda}$. Further, we denote the 
cohomology modules $H^{j}(Z(w, \underline i) ,~{\mathcal L}(w,  \underline i, \mathbb C_{\lambda} ))$ by $H^j(w, \lambda)$. 

Recall the following isomorphism of $B$-linearized sheaves ( see  [11, II, p.366,  section 14.1, equation (4) ] and  [8, Theorem 12.11,  p.290] ): \[ R^{j}{g}_{*}{\mathcal L}(w, \underline i, V) = {\mathcal L}(s_{\gamma},   H^{j}(Z(s_{\gamma}w,  \underline i''), {\mathcal L}(s_{\gamma}w, \underline i'', V)))     \hspace{1.5cm}  ( j\geq 0 )  .  \]

We use the simple notation for the cohomology modules and apply the above isomorphism 
in the above short exact sequence to obtain the following short exact sequence of $P_{\gamma}$-modules:
$$
0 \to H^{1}(s_{\gamma}, H^{j-1}(s_{\gamma}w, V)) 
\to 
H^{j}(w, V) \to
H^{0}(s_{\gamma} , H^{j}(s_{\gamma}w, V) ) \to 0.
$$

In this paper, we call the above short exact sequence of $B$-modules 
{\it SES} when ever we use it. 

Let $\alpha$ be a simple root and $\lambda \in X(T)$ be such that 
$\langle \lambda , \alpha \rangle  \geq 0$. Here, we recall 
the following result due to Demazure ( see [6, Page 1] ) on a short exact 
sequence of $B$-modules:

\begin{lemma} Let $K$ denote the kernel of the surjective evaluation
map $H^0({s_\alpha, \lambda}) 
\longrightarrow \mathbb{C}_\lambda$. Then we have
\begin{enumerate}
\item  The sequence \mbox{$0 \longrightarrow K \longrightarrow H^0({s_\alpha, \lambda}) 
\longrightarrow \mathbb{C}_\lambda \longrightarrow 0$} \hspace{0.2cm} of $B$-modules 
is exact.
\item The sequence {$0 \longrightarrow \mathbb{C}_{s_{\alpha}(\lambda)} \longrightarrow 
K \longrightarrow H^0({s_\alpha, \lambda-\alpha})\longrightarrow 0$}\hspace{0.2cm} of $B$-modules  is  exact, whenever
$\langle \lambda , \alpha \rangle \geq 1$. 
\item If  $\langle \lambda,\alpha \rangle=1$ then $ H^0({s_\alpha,\lambda-\alpha})=0$.
\item If $\langle \lambda,\alpha \rangle=0$, then $K=0$ and hence {\rm (2)} does not hold. 
\end{enumerate}
\end{lemma}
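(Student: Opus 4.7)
Since $P_\alpha/B \cong L_\alpha/B_\alpha \cong \mathbb{P}^1$, my plan is to reduce the whole statement to concrete $SL_2$-computations on this projective line. Set $n = \langle \lambda, \alpha\rangle \geq 0$; by Borel--Weil, $H^0(s_\alpha, \lambda)$ is $(n+1)$-dimensional with simple $T$-weights $\lambda, \lambda - \alpha, \ldots, \lambda - n\alpha = s_\alpha(\lambda)$, and $U_{-\alpha} \subset B$ acts as a single lowering operator that carries each weight space isomorphically onto the next. Part (1) is then immediate: evaluation at the $B$-fixed point $eB$ is $B$-equivariant and carries the weight-$\lambda$ line isomorphically onto $\mathbb{C}_\lambda$, hence is surjective. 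Parts (3) and (4) are formal: if $\langle \lambda, \alpha\rangle = 0$, then $H^0(s_\alpha, \lambda) = \mathbb{C}_\lambda$ itself and $K = 0$; if $\langle \lambda, \alpha\rangle = 1$, then $\langle \lambda - \alpha, \alpha\rangle = -1 < 0$ and $H^0(s_\alpha, \lambda - \alpha)$ vanishes by Borel--Weil.

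For part (2), I would assume $n \geq 1$. Then $K$ has dimension $n$ with weights $\lambda - \alpha, \ldots, \lambda - n\alpha$. The lowest weight $s_\alpha(\lambda) = \lambda - n\alpha$ sits at the bottom of the lowering chain, hence is annihilated by $U_{-\alpha}$, and being $T$-stable it embeds as a $B$-submodule $\mathbb{C}_{s_\alpha(\lambda)} \hookrightarrow K$. The remaining task, which is the heart of the lemma, is to exhibit a $B$-equivariant isomorphism $K/\mathbb{C}_{s_\alpha(\lambda)} \cong H^0(s_\alpha, \lambda - \alpha)$.

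The natural device for this identification is the $B$-stable order-of-vanishing filtration $F_k = \{s \in H^0(s_\alpha, \lambda) : \mathrm{ord}_{eB}(s) \geq k\}$. One has $F_0 = H^0(s_\alpha, \lambda)$, $F_1 = K$, $F_{n+1} = 0$, and the graded pieces are $F_k/F_{k+1} \cong \mathbb{C}_{\lambda - k\alpha}$, because the cotangent character at $eB$ equals $-\alpha$ (the tangent space to $P_\alpha/B$ at $eB$ is $\mathfrak{g}_\alpha$). The bottom piece $F_n$ coincides with the line $\mathbb{C}_{s_\alpha(\lambda)}$ found above, and $K/F_n$ then has weights $\lambda - \alpha, \ldots, \lambda - (n-1)\alpha$ with $U_{-\alpha}$ again acting as a full lowering chain --- precisely the abstract weight and $U_{-\alpha}$-structure of the $(n-1)$-dimensional $L_\alpha$-irreducible $H^0(s_\alpha, \lambda - \alpha)$, which pins down the $B$-module up to isomorphism. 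The main obstacle is exactly this step in part (2): matching the abstractly-defined subquotient $K/\mathbb{C}_{s_\alpha(\lambda)}$ with the concretely-induced module $H^0(s_\alpha, \lambda - \alpha)$; all other assertions are formal consequences of weight-multiplicity bookkeeping together with standard $\mathbb{P}^1$-vanishing.
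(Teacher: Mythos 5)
The paper never proves this lemma: it is quoted verbatim from Demazure's note [6], so there is no internal argument to compare yours with. Judged on its own, your proof is correct and is the standard $SL_2$/$\mathbb{P}^1$ computation behind Demazure's statement. Parts (1), (3), (4) are indeed immediate from the identification $P_\alpha/B\cong\mathbb{P}^1$ and Borel--Weil weight bookkeeping, and in (2) your order-of-vanishing filtration at the $B$-fixed point (with cotangent character $-\alpha$) correctly produces the graded pieces $\mathbb{C}_{\lambda-k\alpha}$, locates $\mathbb{C}_{s_\alpha(\lambda)}$ as the bottom line, and exhibits $K/\mathbb{C}_{s_\alpha(\lambda)}$ as a multiplicity-free $\alpha$-string on which $U_{-\alpha}$ acts by a full lowering chain.

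The one step you should make explicit is the closing claim that matching $T$-weights and the $U_{-\alpha}$-action ``pins down the $B$-module up to isomorphism''; for arbitrary $B$-modules this is false, since the root subgroups $U_{-\beta}$ with $\beta\in R^{+}\setminus\{\alpha\}$ could act nontrivially. It is true here because those subgroups generate $R_u(P_\alpha)$, which acts trivially on $P_\alpha/B$ and, since characters of $B$ kill unipotent elements, trivially on the equivariant line bundle, hence trivially on $H^{0}(s_\alpha,\lambda)$, on all of its $B$-subquotients such as $K/\mathbb{C}_{s_\alpha(\lambda)}$, and on $H^{0}(s_\alpha,\lambda-\alpha)$. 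Thus every module in sight is really a $B_\alpha$-module, with $B=B_\alpha\ltimes R_u(P_\alpha)$ acting through $B_\alpha$, and a one-line rescaling of a weight basis shows that a multiplicity-free $\alpha$-string with full lowering chain is unique up to $B_\alpha$-isomorphism; alternatively, since such a string is indecomposable, the paper's Lemma 2.4 writes it as $V'\otimes\mathbb{C}_{\mu}$ with $V'$ an irreducible $\hat{L_\alpha}$-module, and comparing highest weights identifies $K/\mathbb{C}_{s_\alpha(\lambda)}$ with $H^{0}(s_\alpha,\lambda-\alpha)$. With that remark inserted (and noting that for $\langle\lambda,\alpha\rangle=1$ the quotient is the empty string, consistent with (3)), your argument is complete.
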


We use the following lemma to compute cohomology groups. The following lemma 
is due to Demazure ( see [6, page 1] ). He used this lemma to prove Borel-Weil-Bott's theorem.

\begin{lemma} 
Let $w = \tau s_{\alpha}$, $l(w) = l(\tau)+1$. Then we have 
\begin{enumerate}
\item  If $\langle \lambda , \alpha \rangle \geq 0$ then 
$H^{j}(w , \lambda) = H^{j}(\tau, H^0({s_\alpha, \lambda}) )$ for all $j\geq 0$.
\item  If $\langle \lambda ,\alpha \rangle \geq 0$, then $H^{j}(w , \lambda ) =
H^{j+1}(w , s_{\alpha}\circ \lambda)$ for all $j\geq 0$.
\item If $\langle \lambda , \alpha \rangle  \leq -2$, then $H^{j+1}(w , \lambda ) =
H^{j}(w ,s_{\alpha}\circ \lambda)$ for all $j\geq 0$. 
\item If $\langle \lambda , \alpha \rangle  = -1$, then $H^{j}( w ,\lambda)$ 
vanishes for every $j\geq 0$.
\end{enumerate}
\end{lemma}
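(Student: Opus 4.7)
The plan is to derive all four parts uniformly from the Leray spectral sequence for the descending $\mathbb{P}^1$-fibration on a BSDH variety, combined with the standard computation of line-bundle cohomology on $\mathbb{P}^1$. I would begin by fixing a reduced expression $\underline i=(i_1,\ldots,i_{r-1},i_r)$ of $w$ with $\alpha_{i_r}=\alpha$; then $\underline i':=(i_1,\ldots,i_{r-1})$ is reduced for $\tau$. The projection $f_r:Z(w,\underline i)\to Z(\tau,\underline i')$ described in the preliminaries is a $\mathbb{P}^1$-fibration, and by the isomorphism $(Iso)$ applied to $V=\mathbb C_\lambda$,
$$R^j{f_r}_*\mathcal L(w,\underline i,\mathbb C_\lambda)=\mathcal L(\tau,\underline i',H^j(s_\alpha,\lambda))\qquad (j=0,1),$$
and zero for $j\geq 2$. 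Whenever one of $R^0{f_r}_*$ or $R^1{f_r}_*$ is zero, the Leray spectral sequence degenerates and yields either $H^j(w,\lambda)\cong H^j(\tau,H^0(s_\alpha,\lambda))$ for all $j$, or $H^{j+1}(w,\lambda)\cong H^j(\tau,H^1(s_\alpha,\lambda))$ for all $j$. Each part of the lemma will be read off from one of these two identities after a short computation of $H^{\bullet}(s_\alpha,\lambda)$.

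Part (1), $\langle\lambda,\alpha\rangle\geq 0$, forces $H^1(s_\alpha,\lambda)=0$ by the standard cohomology on $\mathbb{P}^1$, so the first identity applies directly. Part (4), $\langle\lambda,\alpha\rangle=-1$, forces both $H^0(s_\alpha,\lambda)$ and $H^1(s_\alpha,\lambda)$ to vanish, so $R{f_r}_*=0$ and $H^j(w,\lambda)=0$ for every $j$. For part (3), where $\langle\lambda,\alpha\rangle\leq -2$, one has $H^0(s_\alpha,\lambda)=0$ and, via Serre duality on $P_\alpha/B\cong\mathbb{P}^1$, an identification $H^1(s_\alpha,\lambda)\cong H^0(s_\alpha,s_\alpha\cdot\lambda)$ as $B$-modules; since $\langle s_\alpha\cdot\lambda,\alpha\rangle=-\langle\lambda,\alpha\rangle-2\geq 0$, combining the second Leray identity with part (1) applied to $s_\alpha\cdot\lambda$ gives
$$H^{j+1}(w,\lambda)=H^j(\tau,H^1(s_\alpha,\lambda))=H^j(\tau,H^0(s_\alpha,s_\alpha\cdot\lambda))=H^j(w,s_\alpha\cdot\lambda).$$
Part (2) is then just part (3) applied to $s_\alpha\cdot\lambda$, whose pairing with $\alpha$ is $\leq -2$, together with the involutivity $s_\alpha\cdot(s_\alpha\cdot\lambda)=\lambda$.

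The only step beyond bookkeeping is the equivariant identification $H^1(s_\alpha,\lambda)\cong H^0(s_\alpha,s_\alpha\cdot\lambda)$ used in part (3). At the level of $T$-weights this is forced by the dimension count $\dim H^1(\mathbb{P}^1,\mathcal O(n))=\dim H^0(\mathbb{P}^1,\mathcal O(-n-2))$; promoting this to a $B$-module isomorphism is Serre duality on $P_\alpha/B$ with canonical character $-\alpha$, and is the same input Demazure uses in his proof of Borel--Weil--Bott. I would cite this rather than reprove it, after which the lemma reduces entirely to invocations of $(Iso)$ and the Leray spectral sequence.
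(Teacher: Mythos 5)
Your argument is correct and is essentially the paper's own proof spelled out: the paper also fixes a reduced expression ending in $s_{\alpha}$, uses the $\mathbb{P}^1$-fibration $f_r:Z(w,\underline i)\to Z(\tau,\underline i')$, and invokes the composite-functor (Leray/Grothendieck) description of $H^{j}(w,-)$ in terms of $H^{\bullet}(\tau,-)$ and $H^{\bullet}(s_{\alpha},-)$ together with the standard cohomology of line bundles on $\mathbb{P}^1$, which is exactly your degenerate spectral sequence plus $(Iso)$. The one genuine input you isolate, the $B$-equivariant identification $H^{1}(s_{\alpha},\lambda)\cong H^{0}(s_{\alpha},s_{\alpha}\cdot\lambda)$ for $\langle\lambda,\alpha\rangle\leq -2$, is the same $SL_2$ Borel--Weil--Bott fact the paper cites (Demazure, Jantzen), so citing it is consistent with the paper's treatment.
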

\begin{proof}
 Choose a reduced expression for $w=s_{i_1}s_{i_2}\cdots 
s_{i_r}$ with $\alpha_{i_r}=\alpha$. Hence 
$\tau=s_{i_1}s_{i_2}\cdots s_{i_{r-1}}$ is a reduced 
expression for $\tau$.  Let $\underline i=(i_{1}, i_{2}, \cdots  , i_{r} )$  and 
$\underline i'=(i_{1}, i_{2}, \cdots , i_{r-1} )$. Therefore, we have  the morphism $f_r : Z(w, \underline{i}) \longrightarrow Z(\tau, \underline{i}')$ defined as above. Now, the proof of the lemma follows 
from the fact that the functor $H^0(w,-)$ is the
composite of $H^0(\tau, -)$ and $H^0(s_{\alpha}, -)$ ( together 
with the well-known properties of the cohomology groups of line bundles over $\mathbb{P}^1$ ).
For instance, see [8, p.252, III, Ex $8.1$]  and  [11, p.218, Proposition 5.2(b)].
\end{proof}

The following consequence of Lemma 2.2  will be used 
to compute the cohomology modules in this paper.

Let $\pi:\hat{G}\longrightarrow G$ be the simply connected covering of $G$.  
Let  $\hat{L_{\alpha}}$  ( respectively, $\hat{B_{\alpha}}$ )  be the inverse image 
of $L_{\alpha}$  ( respectively, of $B_{\alpha}$ ) in $\hat{G}$. Note that $\hat{L_{\alpha}}/\hat{B_{\alpha}}$ is isomorphic to $L_{\alpha}/B_{\alpha}$. We make use of this isomorphism to use the same notation for the vector bundle on $L_{\alpha}/B_{\alpha}$ associated to a $\hat{B_{\alpha}}$-module. 
\begin{lemma}
 Let $V$ be an irreducible  $\hat{L_{\alpha}}$-module and $\lambda$ be a character of 
 $\hat{B_{\alpha}}$.  Then we have
\begin{enumerate}
\item If $\langle \lambda , \alpha \rangle \geq 0$, then the $\hat{L_{\alpha}}$-module
$H^{0}(L_{\alpha}/B_{\alpha} , V\otimes \mathbb{C}_{\lambda})$ 
is isomorphic to the tensor product of  $ ~ V$ and 
$H^{0}(L_{\alpha}/B_{\alpha} , \mathbb{C}_{\lambda})$. Further, we have  
$H^{j}(L_{\alpha}/B_{\alpha} , V\otimes \mathbb{C}_{\lambda}) =0$ 
for every $j\geq 1$.
\item If $\langle \lambda , \alpha \rangle  \leq -2$, then we have  
$H^{0}(L_{\alpha}/B_{\alpha} , V \otimes \mathbb{C}_{\lambda})=0.$ 
Further, the $\hat{L_{\alpha}}$-module  $H^{1}(L_{\alpha}/B_{\alpha} , V \otimes \mathbb{C}_{\lambda})$ is isomorphic to the tensor product of  $V$ and $H^{0}(L_{\alpha}/B_{\alpha} , 
\mathbb{C}_{s_{\alpha}\circ\lambda})$. 
\item If $\langle \lambda , \alpha \rangle  = -1$, then 
$H^{j}( L_{\alpha}/B_{\alpha} , V \otimes \mathbb{C}_{\lambda}) =0$ 
for every $j\geq 0$.
\end{enumerate}
\end{lemma}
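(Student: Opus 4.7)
The plan is to reduce each of the three cases to the corresponding case of Lemma 2.2 by exploiting the fact that $V$ is not just a $\hat{B_{\alpha}}$-module but extends to an $\hat{L_{\alpha}}$-module. The key observation I will use is the following standard fact: whenever $V$ is a representation of $\hat{L_{\alpha}}$, the associated homogeneous bundle $\mathcal{L}(V)=\hat{L_{\alpha}}\times^{\hat{B_{\alpha}}}V$ on $\hat{L_{\alpha}}/\hat{B_{\alpha}}$ is $\hat{L_{\alpha}}$-equivariantly trivial; the map $[g,v]\mapsto (g\hat{B_{\alpha}},g\cdot v)$ gives an $\hat{L_{\alpha}}$-equivariant isomorphism
\[
\mathcal{L}(V)\cong \mathcal{O}_{\hat{L_{\alpha}}/\hat{B_{\alpha}}}\otimes V,
\]
where the right-hand side carries the diagonal $\hat{L_{\alpha}}$-action.

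Granting this, I would write $\mathcal{L}(V\otimes \mathbb{C}_{\lambda})\cong \mathcal{L}(V)\otimes\mathcal{L}(\mathbb{C}_{\lambda})\cong V\otimes \mathcal{L}(\mathbb{C}_{\lambda})$, and since tensoring by a finite dimensional vector space is exact and commutes with global sections and their derived functors, this yields an isomorphism of $\hat{L_{\alpha}}$-modules
\[
H^{j}(L_{\alpha}/B_{\alpha}, V\otimes \mathbb{C}_{\lambda}) \cong V\otimes H^{j}(L_{\alpha}/B_{\alpha}, \mathbb{C}_{\lambda})
\]
for every $j\geq 0$, with $\hat{L_{\alpha}}$ acting diagonally on the right. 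After this reduction, the three cases follow at once from Lemma 2.2 applied with $w=s_{\alpha}$ and $\tau=e$ (so that $H^{0}(e,\mu)=\mathbb{C}_{\mu}$ and higher cohomologies vanish). Case (1) uses the vanishing $H^{j\geq 1}(s_{\alpha},\lambda)=0$ when $\langle\lambda,\alpha\rangle\geq 0$; case (2) applies Lemma 2.2 (3) together with the identity $\langle s_{\alpha}\cdot\lambda,\alpha\rangle = -\langle\lambda,\alpha\rangle-2\geq 0$ to rewrite $H^{1}(L_{\alpha}/B_{\alpha},\mathbb{C}_{\lambda})\cong H^{0}(L_{\alpha}/B_{\alpha},\mathbb{C}_{s_{\alpha}\cdot\lambda})$; and case (3) is immediate from Lemma 2.2 (4).

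The only subtle point I foresee is to verify the $\hat{L_{\alpha}}$-equivariance of the trivialization carefully, since the resulting identification on cohomology must respect the $\hat{L_{\alpha}}$-module structure, not merely the underlying vector spaces. This is the one step worth writing out in detail; once it is in place, everything else reduces to the rank one Borel--Weil--Bott computations already encoded in Lemma 2.2 and requires no new input.
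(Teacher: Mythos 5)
Your proposal is correct and follows essentially the same route as the paper: the paper obtains the isomorphism $H^{j}(L_{\alpha}/B_{\alpha}, V\otimes \mathbb{C}_{\lambda})\simeq V\otimes H^{j}(L_{\alpha}/B_{\alpha}, \mathbb{C}_{\lambda})$ by citing Jantzen's tensor identity ([11, I, 4.8 and 5.12]), which is exactly the content of the equivariant trivialization of $\mathcal{L}(V)$ you propose to verify directly, and then likewise concludes by Lemma 2.2 with $w=s_{\alpha}$. So the only difference is that you prove the key identity rather than quote it, and the case analysis matches the paper's.
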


\begin{proof} By [11, p.53, I, Proposition 4.8] and [11, p.77, I, Proposition 5.12], for all $j\geq 0$, we have the following isomorphism as 
$\hat{L_{\alpha}}$-modules:
   $$H^j(L_{\alpha}/B_{\alpha}, V \otimes \mathbb C_{\lambda})\simeq V \otimes
   H^j(L_{\alpha}/B_{\alpha}, \mathbb C_{\lambda}).$$ 
Now, the proof of the lemma follows from Lemma 2.2 by taking $w=s_{\alpha}$ 
and the fact that $L_{\alpha}/B_{\alpha} \simeq P_{\alpha}/B$.
\end{proof}

We now state the following Lemma on indecomposable 
$\hat{B_{\alpha}}$ ( respectively,  $B_{\alpha}$ ) modules which will be used in computing 
the cohomology modules ( see [2, p.130, Corollary 9.1]  ).

\begin{lemma}
\begin{enumerate}
\item
Any finite dimensional indecomposable $\hat{B_{\alpha}}$-module $V$ is isomorphic to 
$V^{\prime}\otimes \mathbb{C}_{\lambda}$ for some irreducible representation
$V^{\prime}$ of $\hat{L_{\alpha}}$, and some character $\lambda$ of $\hat{B_{\alpha}}$.

\item
Any finite dimensional indecomposable $B_{\alpha}$-module $V$ is isomorphic to 
$V^{\prime}\otimes \mathbb{C}_{\lambda}$ for some irreducible representation
$V^{\prime}$ of $\hat{L_{\alpha}}$, and some character $\lambda$ of $\hat{B_{\alpha}}$.
\end{enumerate}
\end{lemma}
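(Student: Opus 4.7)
My plan is to reduce both assertions to the classification of indecomposable finite dimensional rational representations of a Borel subgroup of $SL(2,\mathbb{C})$ (the content of [2, p.130, Corollary 9.1]) and then use a central weight decomposition to handle the ``extra torus'' appearing in $\hat{L_\alpha}$.

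First I would decompose $\hat{L_\alpha}$ as an almost direct product. Since $\hat{G}$ is simply connected, the subgroup $G_\alpha$ generated by the root subgroups $U_{\pm\alpha}$ is isomorphic to $SL(2,\mathbb{C})$, and $\hat{T}$ meets $G_\alpha$ in its image of a maximal torus. Choosing a subtorus $S\subset \hat{T}$ with $\hat{T}=(\hat{T}\cap G_\alpha)\cdot S$ and $(\hat{T}\cap G_\alpha)\cap S$ finite, one gets $\hat{L_\alpha}=G_\alpha\cdot S$ with $S$ central. Correspondingly $\hat{B_\alpha}=B^{\prime}_\alpha\cdot S$, where $B^{\prime}_\alpha$ is a Borel of $G_\alpha$, and $S$ is central in $\hat{B_\alpha}$.

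Next, given a finite dimensional indecomposable rational $\hat{B_\alpha}$-module $V$, the centrality of $S$ implies that each $S$-isotypic summand of $V$ is a $\hat{B_\alpha}$-submodule. Indecomposability therefore forces a single $S$-character $\mu$ to occur, so $V\cong V'\otimes \mathbb{C}_\mu$ where $V'$ is an indecomposable rational $B^{\prime}_\alpha$-module. Invoking [2, p.130, Corollary 9.1], $V'\cong W\otimes \mathbb{C}_\nu$ for some irreducible $G_\alpha$-module $W$ and some character $\nu$ of $B^{\prime}_\alpha$. Since $S$ is central, $W$ extends trivially along $S$ to an irreducible $\hat{L_\alpha}$-module, and the one dimensional factor $\mathbb{C}_\nu\otimes \mathbb{C}_\mu$ reassembles into a single character $\lambda$ of $\hat{B_\alpha}$, yielding the required isomorphism $V\cong W\otimes \mathbb{C}_\lambda$ for part (1). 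For part (2), any rational $B_\alpha$-module pulls back via the covering $\pi\colon \hat{B_\alpha}\longrightarrow B_\alpha$ to a rational $\hat{B_\alpha}$-module of the same underlying vector space, and indecomposability is preserved; applying part (1) gives the statement immediately.

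The only genuinely non-formal input is the classification of indecomposable rational representations of the Borel of $SL(2,\mathbb{C})$, so I expect the main (and only) obstacle to be that step, which however is handled by the cited corollary of [2]. The remaining arguments are routine weight-space bookkeeping and the observation that $S$-centrality makes the decomposition clean.
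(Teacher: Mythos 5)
Your route is the same in spirit as the paper's: the paper disposes of part (1) by directly citing [2, p.130, Corollary 9.1] and of part (2) by exactly your pullback-along-the-covering remark, so the only place where you do real work is the reduction of (1) to the $SL(2,\mathbb{C})$ Borel classification via the almost-direct product $\hat{L_{\alpha}}=G_{\alpha}\cdot S$. That reduction is a reasonable idea, but one step in it fails as stated. Since $\hat{L_{\alpha}}=G_{\alpha}\cdot S$ is only an almost-direct product, the finite central subgroup $F:=G_{\alpha}\cap S$ need not be trivial, and it need not act trivially on the irreducible $G_{\alpha}$-module $W$; hence ``$W$ extends trivially along $S$'' is false in general. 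Concretely, for $\hat{G}=SL_{3}$ and $\alpha=\alpha_{1}$ one has $S=(\ker\alpha)^{\circ}=\{\mathrm{diag}(t,t,t^{-2})\}$ and $\alpha^{\vee}(-1)=\mathrm{diag}(-1,-1,1)\in G_{\alpha}\cap S$, which acts by $-1$ on the standard two-dimensional $G_{\alpha}$-module, so no extension of that $W$ with trivial $S$-action exists. The same issue is hidden in the claim that $\mathbb{C}_{\nu}\otimes\mathbb{C}_{\mu}$ ``reassembles'' into a character of $\hat{B_{\alpha}}$: a character of $B^{\prime}_{\alpha}$ and a character of $S$ define a character of $\hat{B_{\alpha}}=B^{\prime}_{\alpha}\cdot S$ only if they agree on the finite intersection, and you never check this.

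Both points are repairable, and the repair is precisely the missing content. Extend $W$ to the irreducible $\hat{L_{\alpha}}$-module of highest weight $(\dim W-1)\omega_{\alpha}$ (this exists because $\hat{G}$ is simply connected, and $S\subset\ker\alpha$ acts on it by the single character $(\dim W-1)\omega_{\alpha}|_{S}$). The remaining one-dimensional twist is then a pair of characters, one on $\hat{T}\cap G_{\alpha}$ and one on $S$, which agree on $F$ because both are forced by how the central subgroup $F$ acts on $V$; and a pair of characters on two subtori generating $\hat{T}$ that agree on their intersection does glue to a character of $\hat{T}$, hence of $\hat{B_{\alpha}}$ (a character of $T_{1}\times T_{2}$ descends along the surjection $T_{1}\times T_{2}\rightarrow\hat{T}$ exactly when it kills the antidiagonal copy of $F$). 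With that inserted your argument for (1) is complete; part (2) is fine as written and coincides with the paper's one-line proof.
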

\begin{proof}
The proof  of part 1 follows from [2, p.130, Corollary 9.1].

The proof  of part 2 follows from the fact that every $B_{\alpha}$-module can be viewed  as a 
$\hat{B_{\alpha}}$-module via the natural homomorphism. 
\end{proof}

The following lemmas on the evaluation map are known. For the sake of completeness, 
we provide a sketch of a proof.
For a $B$-module $V,$  the evaluation map $ev: H^{0}(w, V)\longrightarrow V$ is given 
by $ev(s)=s(idB)$.

\begin{lemma}
Let $V$ be a finite dimensional rational $G$-module, and let $w\in W$. 
Then we have

\begin{myenumerate}
\item
The evaluation map 
$ev: H^{0}(w, V)\longrightarrow V$ is an isomorphism of $B$-modules.
\item
$H^{i}(w, V)=0$ for every $i\geq 1$.
\end{myenumerate}
\end{lemma}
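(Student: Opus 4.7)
The plan is to exploit the fact that when $V$ is a $G$-module (rather than merely a $B$-module), the associated homogeneous vector bundle $\mathcal{L}(V) = G \times^{B} V$ on $G/B$ is globally trivial, via the $G$-equivariant isomorphism
\[
G \times^{B} V \; \longrightarrow \; G/B \times V, \qquad [g, v] \; \longmapsto \; (gB,\, g \cdot v),
\]
where $G$ acts diagonally on the target. Restricting along $X(w) \hookrightarrow G/B$ yields a $B$-equivariant isomorphism $\mathcal{L}(w, V) \cong \mathcal{O}_{X(w)} \otimes V$, under which the evaluation at $idB$ becomes the canonical identification $\mathbb{C} \otimes V \to V$. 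By the projection formula,
\[
H^{j}(X(w), \mathcal{L}(w, V)) \; \cong \; H^{j}(X(w), \mathcal{O}_{X(w)}) \otimes V
\]
as $B$-modules, so both conclusions reduce at once to the well-known facts $H^{0}(X(w), \mathcal{O}_{X(w)}) = \mathbb{C}$ and $H^{j}(X(w), \mathcal{O}_{X(w)}) = 0$ for $j \geq 1$, which follow from the rational resolution by the BSDH variety $Z(w, \underline{i})$.

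An alternative route, more in the inductive spirit of the paper, is induction on $\ell(w)$ via the \emph{SES}. For the base case $w = s_{\gamma}$, since $V$ is a $G$-module it restricts to a semisimple $L_{\gamma}$-module; decomposing $V = \bigoplus_{i} V_{i}$ into $L_{\gamma}$-irreducibles and applying Lemma~2.3(1) with $\lambda = 0$ to each summand gives $H^{0}(s_{\gamma}, V) = V$ and $H^{j}(s_{\gamma}, V) = 0$ for $j \geq 1$. For the inductive step, write $w = s_{\gamma} w'$ with $\ell(w) = \ell(w') + 1$. By the inductive hypothesis, $H^{0}(w', V) \cong V$ as $B$-modules and $H^{j}(w', V) = 0$ for $j \geq 1$. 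Feeding this into the SES
\[
0 \to H^{1}(s_{\gamma}, H^{j-1}(w', V)) \to H^{j}(w, V) \to H^{0}(s_{\gamma}, H^{j}(w', V)) \to 0
\]
makes every outer term vanish for $j \geq 1$ (invoking the base case for $V$ when $j = 1$), giving $H^{j}(w, V) = 0$; for $j = 0$ the right-hand term is $H^{0}(s_{\gamma}, V) = V$.

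The only real subtlety is naturality: one must verify that the isomorphism $H^{0}(w, V) \cong V$ produced by either route coincides with the evaluation at $idB$. This is essentially tautological — under the trivialization, a global section corresponds to a $V$-valued function on $X(w)$ whose value at $idB$ recovers itself, while the edge map in the SES for $j = 0$ is precisely evaluation on the distinguished fibre over $idB$ — and $B$-equivariance of $ev$ is automatic because $B$ fixes $idB$. The only non-formal input the argument requires is the vanishing of $H^{\geq 1}(X(w), \mathcal{O}_{X(w)})$, a standard consequence of the rationality of Schubert varieties.
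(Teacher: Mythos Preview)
Your first argument is precisely the paper's: the paper observes that since $V$ is a $G$-module the bundle $\mathcal{L}(V)$ is trivial on $G/B$, hence its restriction to $X(w)$ is trivial, and declares both assertions immediate from this. You have merely spelled out what ``immediate'' means---the explicit trivialization $[g,v]\mapsto(gB,g\cdot v)$, the projection formula, and the vanishing $H^{\geq 1}(X(w),\mathcal{O}_{X(w)})=0$ coming from the BSDH resolution---all of which the paper leaves tacit.

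Your inductive alternative via the \emph{SES} and Lemma~2.3 is also correct but is not the paper's route; it trades the single global input about $\mathcal{O}_{X(w)}$ for repeated use of the $\mathbb{P}^{1}$ computation. (One small wording issue: the true base of that induction is $w=1$, which is trivial since $X(1)$ is a point; your ``base case'' $w=s_{\gamma}$ is really the key ingredient fed into the inductive step.)
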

\begin{proof}

Since $V$ is a $G$-module, the homogeneous vector bundle $\mathcal{L} ( V)$
on $G/B$ is trivial. Therefore, its restriction  $\mathcal{L}(w, V)$ to $X(w)$  is also trivial.
Hence,  the assertions of both parts of the lemma  follow immediately.
\end{proof}  

\begin{lemma}
Let $w\in W,$ and $V$ be a $B$-submodule 
of a $G$-module $V^{\prime}$. Then the evaluation map 
$ev:H^{0}(w, V)\longrightarrow V$ is injective.
\end{lemma}
\begin{proof}

Since $V$ is a $B$-submodule of $V^{\prime}$,  it follows that $H^{0}(w , V)$ is a $B$-submodule of $H^{0}(w , V^{\prime})$. Hence, we have the following commutative diagram of 
$B$-modules: 

\begin{center}
$\xymatrix{H^{0}(w , V)\ar[d]\ar[r] & H^{0}(w, V^{\prime})\ar[d]\\
V \ar[r] &  V^{\prime}}$
\end{center} 
Here, both the horizontal maps are the canonical inclusions and both the 
vertical maps are evaluation maps.
Since the  $B$-module $V^{\prime}$ is the restriction 
of a $G$-module, by using Lemma 2.5, we see that the evaluation map
$ev^{\prime}:H^{0}(w , V^{\prime})\longrightarrow V^{\prime}$ on the 
right hand side is an isomorphism of $B$-modules. Since the first 
horizontal map $H^{0}(w, V)\longrightarrow H^{0}(w, V^{\prime})$ is 
injective, the composition $H^{0}(w, V)\longrightarrow V^{\prime}$ is also 
injective. 
Thus, we conclude that the vertical map $ev: H^{0}(w, V) \longrightarrow V$ 
on the left hand side is injective. This completes the proof of the lemma.
\end{proof}

\section{Cohomology modules in the simply laced case}
In this section, we prove some preliminary results for the simply laced case. 
We use these results in section 4 to prove the main results. 
Through out this section, we assume that  $G$ is simply laced. 

We use the following lemma whose proof is well known.

\begin{lemma}
Let $\alpha\in S$ and $\beta$ be a root different from both 
$\alpha$ and $-\alpha.$
Then we have $\langle \beta , \alpha \rangle \in \{-1, 0, 1\}$.
\end{lemma}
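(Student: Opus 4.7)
The statement is a standard fact about simply laced root systems, so the plan is short. The key input is that in type $A$, $D$, $E$ all roots have the same length with respect to the $W$-invariant form $(~,~)$, i.e.\ $(\alpha,\alpha) = (\beta,\beta)$ for any two roots $\alpha, \beta$. Under this symmetry, the Cartan integer $\langle \beta, \alpha \rangle = 2(\beta,\alpha)/(\alpha,\alpha)$ coincides with $\langle \alpha, \beta \rangle$.

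The idea is to bound $\langle \beta, \alpha \rangle$ in absolute value. Applying the Cauchy--Schwarz inequality to the positive definite form $(~,~)$ on $\mathrm{Hom}_{\mathbb{R}}(\mathfrak{h}_{\mathbb{R}}, \mathbb{R})$, one has $(\alpha,\beta)^2 \leq (\alpha,\alpha)(\beta,\beta) = (\alpha,\alpha)^2$, with equality if and only if $\alpha$ and $\beta$ are proportional. Multiplying and using $(\beta,\beta) = (\alpha,\alpha)$ gives
\[
\langle \beta, \alpha \rangle \, \langle \alpha, \beta \rangle \;=\; \langle \beta, \alpha \rangle^{2} \;\leq\; 4,
\]
with equality precisely when $\beta$ is a scalar multiple of $\alpha$. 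In a reduced root system the only roots proportional to $\alpha$ are $\pm \alpha$, and by hypothesis $\beta \neq \pm \alpha$, so the inequality is strict: $\langle \beta, \alpha \rangle^{2} \leq 3$.

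Finally, since $\alpha, \beta$ are both roots, $\langle \beta, \alpha \rangle$ is an integer, and the bound $\langle \beta, \alpha \rangle^{2} \leq 3$ forces $\langle \beta, \alpha \rangle \in \{-1, 0, 1\}$, as required. There is no real obstacle here; the only point to be careful about is ruling out the boundary cases $\langle \beta, \alpha \rangle = \pm 2$, which is exactly what the simply laced hypothesis and the exclusion $\beta \neq \pm \alpha$ are designed to handle via Cauchy--Schwarz.
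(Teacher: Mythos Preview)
Your proof is correct and essentially the same as the paper's: both use that $\langle\beta,\alpha\rangle=\langle\alpha,\beta\rangle$ in the simply laced case, bound the product $\langle\beta,\alpha\rangle\langle\alpha,\beta\rangle$ for non-proportional roots, and conclude by integrality. The only cosmetic difference is that you spell out the Cauchy--Schwarz argument behind the bound, whereas the paper simply quotes that the product lies in $\{0,1,2,3\}$ as a standard fact about root systems.
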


From now on, for any $T$-module and a character $\mu$ of $T$, we denote the set of all vectors 
$v$ in $V$ such that $t  v=\mu(t)v$ for every $t\in T$ by $V_{\mu}$. That is, 
$$V_{\mu}:=\{v\in V: t v=\mu(t)v ~ for ~ all ~ t\in T\}.$$    Here $tv$ denotes the action of $t$ on $v$.

{\it Notation. } We set up a notation for some indecomposable $B_{\gamma}$-summand of 
$\mathfrak{g}.$  Let $\gamma$ be a simple root. We recall that $sl_{2, \gamma}$ is the simple Lie algebra corresponding to $\gamma$. We first note that $sl_{2, \gamma}$ is an indecomposable $B_{\gamma}$-summand of $\mathfrak{g}$. We also note that for $\beta\in R$ such that 
$\langle \beta , \gamma \rangle = 1,$  the $T$-submodule  $\mathfrak{g}_{\beta} \oplus \mathfrak{g}_{\beta-\gamma}$  is an indecomposable $B_{\gamma}$-summand
of $\mathfrak{g}.$ We denote it by $\mathfrak{g}_{\beta, \beta - \gamma}.$ Indeed, this is an irreducible $L_{\gamma}$-submodule of $\mathfrak{g}.$  In this paper, we denote by 
the one dimensional complex vector space generated by a non zero vector $v$ by $\mathbb{C}v$.   

The following lemma gives a description of indecomposable $B_{\gamma}$-summands
of $\mathfrak{g}$.
\begin{lemma}
Every indecomposable $B_{\gamma}$-summand $V$ of 
$\mathfrak{g}$ is one of the following:
\begin{myenumerate}
\item
$V=\mathbb{C} h$ for some non zero vector $h\in \mathfrak{h}$ such that 
$\gamma(h)=0$. 
\item
$V=\mathfrak{g}_{\beta}$ for some root $\beta$ such that 
$\langle \beta , \gamma \rangle = 0.$
\item  $V=\mathfrak{g}_{\beta , \beta-\gamma}$ 
for some root $\beta$ such that 
$\langle \beta , \gamma \rangle = 1$. 
\item
$V=sl_{2,\gamma}$, the three dimensional irreducible 
$L_{\gamma}$-module with highest weight $\gamma$.
\end{myenumerate}
\end{lemma}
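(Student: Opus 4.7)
The plan is to exhibit an explicit $B_\gamma$-stable direct sum decomposition of $\mathfrak{g}$ whose summands are exactly of the four listed types, then invoke the Krull--Schmidt theorem together with $T$-weight bookkeeping to conclude that any indecomposable $B_\gamma$-summand $V$ coincides with one of them.

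First I would write down a reference decomposition. Starting from the root space decomposition $\mathfrak{g} = \mathfrak{h}\oplus \bigoplus_{\alpha \in R} \mathfrak{g}_\alpha$, split off $sl_{2,\gamma} = \mathbb{C} h_\gamma \oplus \mathfrak{g}_\gamma \oplus \mathfrak{g}_{-\gamma}$ (the adjoint representation of $SL_2$, hence $L_\gamma$-irreducible and a fortiori $B_\gamma$-indecomposable). Choose any basis of $\ker(\gamma) \subset \mathfrak{h}$; this gives $\dim\mathfrak{h}-1$ trivial one-dimensional $B_\gamma$-summands, since $T$ acts with weight $0$ and $\mathrm{ad}(x_{-\gamma})h = -\gamma(h)\,x_{-\gamma} = 0$ whenever $\gamma(h)=0$. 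For every root $\beta \ne \pm\gamma$, Lemma 3.1 yields $\langle\beta,\gamma\rangle\in\{-1,0,1\}$, and in the simply laced case a short length argument forces the $\gamma$-string through $\beta$ to be either $\{\beta\}$ (when $\langle\beta,\gamma\rangle=0$) or $\{\beta,\beta-\gamma\}$ (when $\langle\beta,\gamma\rangle=1$). Correspondingly $\mathfrak{g}_\beta$ is an $L_\gamma$-stable summand or $\mathfrak{g}_{\beta,\beta-\gamma}$ is a two-dimensional $L_\gamma$-irreducible summand. Pairing $\beta \leftrightarrow \beta-\gamma$ for the roots with $\langle\beta,\gamma\rangle = \pm 1$ assembles $\mathfrak{g}$ as a direct sum of indecomposable $B_\gamma$-submodules of the four types (i)--(iv).

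Next, the Krull--Schmidt theorem (valid for finite-dimensional representations of an algebraic group) implies that any indecomposable $B_\gamma$-summand $V$ of $\mathfrak{g}$ is $B_\gamma$-isomorphic to one of the summands of the reference decomposition. In particular $\dim V \in \{1,2,3\}$ and the $T$-weights of $V$ coincide, with multiplicities, with those of the matching reference summand.

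Finally, I would pin down $V$ as a subspace of $\mathfrak{g}$ using its $T$-weight decomposition $V = \bigoplus_\mu V_\mu$. If $\dim V=3$, the weights are $-\gamma,0,\gamma$; one-dimensionality of $\mathfrak{g}_{\pm\gamma}$ forces $V\supset \mathfrak{g}_\gamma\oplus\mathfrak{g}_{-\gamma}$, and the zero-weight line in $V$ must contain $\mathrm{ad}(x_{-\gamma})\mathfrak{g}_\gamma = \mathbb{C} h_\gamma$, giving $V = sl_{2,\gamma}$. If $\dim V = 2$, the weights are $\{\beta,\beta-\gamma\}$ with $\langle\beta,\gamma\rangle=1$, so $V = \mathfrak{g}_\beta \oplus \mathfrak{g}_{\beta-\gamma} = \mathfrak{g}_{\beta,\beta-\gamma}$. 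If $\dim V=1$ with nontrivial $T$-weight $\beta$, then $V = \mathfrak{g}_\beta$, and Krull--Schmidt uniqueness forces the one-dimensional module $\mathbb{C}_\beta$ to appear in the reference decomposition, which happens only when $\langle\beta,\gamma\rangle = 0$. If $\dim V = 1$ with trivial $T$-weight, then $V = \mathbb{C} h$ for some $h\in\mathfrak{h}$, and triviality of the $U_{-\gamma}$-action combined with $\mathrm{ad}(x_{-\gamma})h = -\gamma(h)\,x_{-\gamma}$ forces $\gamma(h) = 0$. The only slightly subtle ingredient is the simply laced length computation bounding $\gamma$-strings by two, used in Step 1 to limit the possible isomorphism types to the four listed ones; everything else is routine $T$-weight bookkeeping together with Krull--Schmidt.
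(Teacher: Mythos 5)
Your proof is correct, but it takes a different route from the paper's. The paper argues intrinsically on $V$: it picks a maximal weight $\lambda$ of $V$, observes that the weights of $V$ lie on the single string $\lambda,\lambda-\gamma,\dots$, notes that $V$ is in fact an $L_{\gamma}$-module (this is where the classification of indecomposable $B_{\gamma}$-modules, Lemma 2.4, and complete reducibility of $\mathfrak{g}$ under $L_{\gamma}$ enter), and then uses Lemma 3.1 to bound $\dim V\leq 2$ unless $V=sl_{2,\gamma}$, finishing by inspection of the possible weights. You instead write down an explicit reference decomposition $\mathfrak{g}=sl_{2,\gamma}\oplus(\text{lines in }\ker\gamma\subset\mathfrak{h})\oplus\bigoplus_{\langle\beta,\gamma\rangle=0}\mathfrak{g}_{\beta}\oplus\bigoplus_{\langle\beta,\gamma\rangle=1}\mathfrak{g}_{\beta,\beta-\gamma}$, invoke Krull--Schmidt to pin the isomorphism type of $V$, and then identify $V$ as a subspace by weight bookkeeping (one-dimensionality of the nonzero weight spaces, and the $x_{-\gamma}$-action to recover $\mathbb{C}h_{\gamma}$ in the three-dimensional case, resp.\ to force $\gamma(h)=0$ in the weight-zero case). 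Both routes hinge on the simply laced bound of Lemma 3.1. Yours makes completely explicit why, for instance, $\mathfrak{g}_{\beta}$ with $\langle\beta,\gamma\rangle=-1$, or a two-dimensional module with weights $\{0,-\gamma\}$, although perfectly good $B_{\gamma}$-submodules of $\mathfrak{g}$, can never occur as direct summands (no isomorphic piece appears in the reference decomposition) --- a point the paper handles implicitly through the assertion that $V$ is $L_{\gamma}$-stable; the paper's version is shorter because it never constructs a full decomposition. The only blemish is a harmless sign: $[x_{-\gamma},h]=\gamma(h)x_{-\gamma}$ rather than $-\gamma(h)x_{-\gamma}$; the conclusion $\gamma(h)=0$ is unaffected.
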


\begin{proof}
Let $V$ be an indecomposable $B_{\gamma}$-summand of $\mathfrak{g}$.
Let $\lambda$ be a maximal weight of $V$. Then the direct sum $\oplus_{r\in \mathbb{Z}_{\geq 0}}V_{\lambda-r\gamma}$ is a $B_{\gamma}$-summand of $V$.  
Hence, we have $V=\oplus_{r\in \mathbb{Z}_{\geq 0}}V_{\lambda-r\gamma}$. Note that 
$V$ is a $L_{\gamma}$-module. Therefore, using Lemma 3.1, we see that the dimension of $V$ 
must be at most two unless $V=sl_{2,\gamma}$. 
Further, if the dimension of $V$ is one, either $V=\mathbb{C} h$ for some non zero vector 
$h\in \mathfrak{h}$ such that $\gamma(h)=0,$ or $V=\mathfrak{g}_{\beta}$ for some root $\beta$ such that $\langle \beta , \gamma \rangle = 0.$  Also, if the dimension 
of $V$ is two, then we must have $V=\mathfrak{g}_{\beta , \beta-\gamma}$  for some root $\beta$ such that $\langle \beta , \gamma \rangle = 1$. This completes the proof of the lemma.
\end{proof}

First, we note that a decomposition of a $B_{\gamma}$-module into a direct sum of 
indecomposable submodules is not necessarily unique.  The following lemma is crucial in the proof of the main results in section 3 and section 4.

We make a remark on some notation used in the statement of Lemma 3.3 and its 
proof. 

{\it Notation.} 
Let  $h(\gamma) \in\mathfrak{h}$ be the fundamental dominant coweight corresponding to $\gamma$. That is,  $h(\gamma)$  satisfies $\gamma(h(\gamma))=1$ and $\nu(h(\gamma))=0$ for every 
simple root $\nu$ different from $\gamma$. Then the smallest $B_{\gamma}$-submodule of 
$\mathfrak{g}$ containing $h(\gamma)$ is the two dimensional vector subspace $\mathbb{C} h(\gamma) \oplus \mathfrak{g}_{-\gamma}$. Note that this is an indecomposable $B_{\gamma}$-submodule of $\mathfrak{g}$. In the context of Lemma 2.4,  we have $\mathbb{C} h(\gamma) \oplus \mathfrak{g}_{-\gamma}=V_{1}\otimes \mathbb{C}_{-\omega_{\gamma}}$, where $V_{1}$ is the standard  two dimensional irreducible $\hat{L_{\gamma}}$-module. We denote this indecomposable $B_{\gamma}$-submodule by $\mathfrak{g}_{0, -\gamma}$. Further, $\mathfrak{g}_{0, -\gamma}$ is an indecomposable $B_{\gamma}$-direct summand of any $B_{\gamma}$-submodule $V^{\prime}$ of $\mathfrak{g}$ such that  $\mathfrak{g}_{0, -\gamma}\subset V^{\prime}$  and 
$\mathfrak{g}_{\gamma}\bigcap V^{\prime}=0$.  We use this two dimensional $B_{\gamma}$-submodule in type (2) of the statement of Lemma 3.3 and its proof.

\begin{lemma}
Let $w\in W$ and $\gamma\in S$. Let $V$ be a $B$-submodule of $\mathfrak{g}$ containing  $\mathfrak{b}$. Then there is a decomposition of the $B_{\gamma}$-module $H^{0}(w,V)$ such that 
every indecomposable $B_{\gamma}$-summand $V^{\prime}$ of 
$H^{0}(w , V)$ is one of the following:
\begin{myenumerate}
\item                 
$V^{\prime}=\mathbb{C} h$ for some non zero vector $h\in \mathfrak{h}$ such that 
$\gamma(h)=0$. 
\item  
$V^{\prime}=\mathfrak{g}_{0, -\gamma}$.
\item
$V^{\prime}=\mathfrak{g}_{\beta}$ for some root $\beta$ such that 
$\langle \beta , \gamma \rangle \in \{-1, 0 \}$. 
\item  $V^{\prime}=\mathfrak{g}_{\beta , \beta-\gamma}$ 
for some root $\beta$ such that 
$\langle \beta , \gamma \rangle = 1.$  
\item
$V^{\prime}=sl_{2,\gamma}$, the restriction of the three dimensional irreducible 
$L_{\gamma}$-module with highest weight $\gamma$.
\end{myenumerate}
\end{lemma}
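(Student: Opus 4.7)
The plan is to induct on $l(w)$. For the base case $l(w)=0$ (so $w=e$) we have $H^{0}(e,V)=V$, and I would decompose $V$ directly from its root space structure. Writing $V=\mathfrak{h}\oplus\bigoplus_{\beta\in R_{V}}\mathfrak{g}_{\beta}$ for a $B$-stable set of roots $R_{V}\supseteq R^{-}$, the hyperplane $\{h\in\mathfrak{h}:\gamma(h)=0\}$ supplies the type (1) summands; the line $\mathbb{C}\cdot h_{\gamma}$ combines with $\mathfrak{g}_{-\gamma}\subseteq\mathfrak{b}$ to form either a type (5) summand $sl_{2,\gamma}$ (when $\mathfrak{g}_{\gamma}\subseteq V$) or a type (2) summand $\mathfrak{g}_{0,-\gamma}$ (otherwise). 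The remaining root spaces $\mathfrak{g}_{\beta}$ with $\beta\neq\pm\gamma$ are grouped into $\gamma$-strings of length at most two by Lemma 3.1: singletons give type (3), full two-element strings give type (4); the $B$-stability of $V$ together with $\mathfrak{g}_{-\gamma}\subseteq V$ ensures that whenever $\langle\beta,\gamma\rangle=1$ and $\mathfrak{g}_{\beta}\subseteq V$, one also has $\mathfrak{g}_{\beta-\gamma}\subseteq V$, so the type (4) pairs always close up.

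For the inductive step, the workhorse is the degree-zero specialisation of the SES: for any simple root $\delta$ with $l(s_{\delta}w)=l(w)-1$, one has $H^{0}(w,V)\cong H^{0}(s_{\delta},H^{0}(s_{\delta}w,V))$ as $P_{\delta}$-modules. The clean situation is when $\gamma$ itself lies in the left descent set of $w$, in which case I would take $\delta=\gamma$. By induction, $H^{0}(s_{\gamma}w,V)$ admits a $B_{\gamma}$-decomposition of the five claimed types, and I would apply $H^{0}(s_{\gamma},-)$ summand by summand using Lemma 2.3: a type (1) summand $\mathbb{C}_{0}$ survives unchanged; a type (2) summand $V_{1}\otimes\mathbb{C}_{-\omega_{\gamma}}$ vanishes since $\langle-\omega_{\gamma},\gamma\rangle=-1$; a type (3) summand $\mathbb{C}_{\beta}$ survives when $\langle\beta,\gamma\rangle=0$ and vanishes when $\langle\beta,\gamma\rangle=-1$; a type (4) summand $V_{1}\otimes\mathbb{C}_{\beta-\omega_{\gamma}}$ satisfies $\langle\beta-\omega_{\gamma},\gamma\rangle=0$ and so survives; the type (5) summand $sl_{2,\gamma}$ survives as an $L_{\gamma}$-module. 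Every surviving summand is of one of the five allowed forms, and crucially any weight-$(-\gamma)$ vector of $H^{0}(w,V)$ arises only from the type (5) piece, so it is automatically engulfed in an $sl_{2,\gamma}$ summand rather than forming a stray $\mathbb{C}_{-\gamma}$.

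When $\gamma$ is not in the left descent set of $w$, I would have to pick $\delta\neq\gamma$ for the recursion, and the main difficulty is that $H^{0}(s_{\delta},-)$ is governed by the $B_{\delta}$-structure rather than the $B_{\gamma}$-structure delivered by the inductive hypothesis. My approach is to invoke the inductive hypothesis on $H^{0}(s_{\delta}w,V)$ twice---once with $\gamma$ and once with $\delta$---so that the $B_{\delta}$-decomposition lets me compute $H^{0}(s_{\delta},H^{0}(s_{\delta}w,V))$ term by term, while the $B_{\gamma}$-decomposition together with the embedding $H^{0}(w,V)\hookrightarrow V$ from Lemma 2.6 (which forces all $T$-weights of $H^{0}(w,V)$ into $\{0\}\cup R$) and Lemma 2.4 is used to identify the resulting $B_{\gamma}$-summands. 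The step I expect to be the true obstacle is ruling out a lone one-dimensional $B_{\gamma}$-summand isomorphic to $\mathbb{C}_{-\gamma}$: its weight lies in $R$ and so is not excluded by crude weight bookkeeping, yet it matches none of the five listed types. Excluding it amounts to showing that any weight-$(-\gamma)$ vector in $H^{0}(w,V)$ is absorbed into a type (2) or type (5) summand, which ultimately reduces back to the Case $\delta=\gamma$ computation.
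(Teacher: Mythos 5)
Your overall architecture coincides with the paper's: induction on $l(w)$ via the one-step identification $H^{0}(w,V)\cong H^{0}(s_{\delta},H^{0}(s_{\delta}w,V))$, a direct decomposition of $V$ in the base case, and a summand-by-summand application of Lemma 2.3 when $\gamma$ itself is a left descent of $w$ (this is the paper's Case 1, and your treatment of it is correct). The genuine gap is exactly at the point you flag and then defer: when $\gamma$ is \emph{not} a left descent, excluding a stray summand $\mathbb{C}_{-\gamma}$ does not ``reduce back to the Case $\delta=\gamma$ computation'' --- no such reduction exists, since then $l(s_{\gamma}w)=l(w)+1$ and the functor $H^{0}(s_{\gamma},-)$ never enters the recursion. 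This non-descent situation (the paper's Case 2, $\delta=\alpha\neq\gamma$ with $\langle\gamma,\alpha\rangle=-1$) is where the actual work lies, and your proposal gives no mechanism for it.

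Concretely, the problematic branch is: the $B_{\gamma}$-summand of $H^{0}(s_{\alpha}w,V)$ containing $\mathfrak{g}_{-\gamma}$ is $sl_{2,\gamma}$, but $\mathfrak{g}_{\alpha+\gamma}\not\subset H^{0}(s_{\alpha}w,V)$; then $\mathfrak{g}_{\gamma}$ is a $B_{\alpha}$-summand with $\langle\gamma,\alpha\rangle=-1$, so it is annihilated by $H^{0}(s_{\alpha},-)$, and downstairs one must instead exhibit a zero-weight vector $h\in\mathfrak{h}\cap H^{0}(s_{\alpha}w,V)$ with $\gamma(h)=1$ and $\nu(h)=0$ for every simple $\nu\neq\gamma$, so that $\mathbb{C}\cdot h$ is a type (1) summand for $B_{\alpha}$, survives to $H^{0}(w,V)$, and pairs with $\mathfrak{g}_{-\gamma}$ (which survives because, for $B_{\alpha}$, it sits in the type (4) module $\mathfrak{g}_{-\gamma,-\gamma-\alpha}$) to form the type (2) summand. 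Producing such an $h$ is the heart of the paper's proof: one starts from $h_{1}$ supported on the $h_{\beta}$ with $\beta\in S_{1}$ (simple roots whose $sl_{2,\beta}$ is a summand of $H^{0}(s_{\alpha}w,V)$), uses the bracket $x_{-\nu}=[x_{-\nu},h_{\beta}]$ to show $\mathfrak{g}_{-\nu}\subset H^{0}(s_{\alpha}w,V)$ for each offending $\nu$ with $\nu(h_{1})\neq 0$, applies the induction hypothesis \emph{at the simple root $\nu$} to obtain a summand $\mathfrak{g}_{0,-\nu}$, and corrects $h_{1}$ by the resulting vectors $h(\nu)$. This forces two features absent from your plan: the induction hypothesis must be carried for all simple roots simultaneously (not just for $\gamma$ and the chosen descent $\delta$), and the type (2) summand must be normalized so that its zero-weight vector kills every simple root other than $\gamma$ --- which is precisely what guarantees it survives $H^{0}(s_{\alpha},-)$ for an arbitrary descent $\alpha\neq\gamma$. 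Without this construction your induction does not close.
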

\begin{proof}
First note that in view of Lemma 2.6,  $H^{0}(w, V)$ is a $B$-submodule
of $V$. Let $V^{\prime}$ be an indecomposable $B_{\gamma}$-summand of
$H^{0}(w, V)$.  Note that there is a non zero vector $v\in V^{\prime}$ and a $\mu\in X(B_{\gamma})=X(T)$ such that $bv=\mu(b) v$ for every $b\in B_{\gamma}$.
If  $\mu\neq -\gamma$, then using the arguments similar to the proof of Lemma 3.2, we see that $V^{\prime}$ must be one of the 
types (1), (3) or (4).

Therefore, we may assume that $\mathfrak{g}_{-\gamma}$ is a $B_{\gamma}$-submodule of 
$V^{\prime}$. To complete the proof of the lemma, we need to show that  either  $sl_{2, \gamma}$ or  
$\mathfrak{g}_{0, -\gamma}$ is an indecomposable $B_{\gamma}$-summand of $H^{0}(w, V).$  

The proof  is by induction on $l(w)$.
If $l(w)=0$, then we have $w=1$.  First note that since $S$ is a basis of the complex vector space $Hom_{\mathbb{C}}(\mathfrak{h}, \mathbb{C})$, it follows that  $h(\gamma)\in \mathfrak{h}$. Further,
since $\mathfrak{h}$ is a complex vector subspace of $V,$ it follows that  $h(\gamma)\in V$.
Thus,  type (2) indecomposable $B_{\gamma}$-module $\mathfrak{g}_{0, -\gamma}$ is a submodule of $V$. Now, if $\mathfrak{g}_{\gamma}\subset V,$ then it is easy to see that $V^{\prime}=sl_{2, \gamma}$ is an indecomposable $B_{\gamma}$-summand of $V$ containing $\mathfrak{g}_{-\gamma}$. Otherwise, the $B_{\gamma}$-submodule $\mathfrak{g}_{0, -\gamma}$ is a summand of $V$.

Therefore, we may  choose a simple root 
$\alpha$ such that $l(w)=1+l(s_{\alpha}w)$.
By induction on $l(w)$, we assume that for any simple root $\nu$ such that 
$\mathfrak{g}_{-\nu}\subset H^{0}(s_{\alpha}w, V)$, there is a 
decomposition of the $B_{\nu}$-module $H^{0}(s_{\alpha}w, V)$  such that the 
indecomposable $B_{\nu}$-summand $V^{\prime}$ of $H^{0}(s_{\alpha}w, V)$ 
containing $\mathfrak{g}_{-\nu}$ is either of the form 
$V^{\prime}=\mathfrak{g}_{0, -\nu}$ or of the form
$V^{\prime}=sl_{2, \nu}$. In particular,  it follows that 
either $\mathfrak{g}_{0, -\gamma}$  or $sl_{2, \gamma}$ is a 
 $B_{\gamma}$-summand of  $H^{0}(s_{\alpha}w, V)$.

We divide the  proof into three different cases as follows.

{\it Case 1:} We first assume that $\gamma=\alpha$. 
By using {\it SES}, we have $$H^{0}(s_{\alpha}, H^{0}(s_{\alpha}w , V))=H^{0}(w, V).$$ 
 
 Note that in view of Lemma 2.4,  we have $\mathfrak{g}_{0, -\gamma}=V_{1}\otimes \mathbb{C}_{-\omega_{\gamma}}$, where $V_{1}$ is the standard  two dimensional irreducible 
$\hat{L_{\gamma}}$-module. Hence by Lemma 2.3, we have  $$H^{0}(s_{\alpha}, \mathfrak{g}_{0, -\gamma})=H^{0}(s_{\gamma}, \mathfrak{g}_{0, -\gamma})=0 ~ ( ~ since ~ \alpha=\gamma ~ ).$$  
Therefore, if $\mathfrak{g}_{0, -\gamma}$  is a $B_{\gamma}$-summand of 
$H^{0}(s_{\alpha}w, V)$, then by using Lemma 2.6, we see that  $\mathfrak{g}_{-\gamma}$ can not be a subspace of $H^{0}(s_{\alpha},  H^{0}(s_{\alpha}w, V))=H^{0}(w, V)$. This is a contradiction to the above observation. Hence, we conclude that $sl_{2, \gamma}$ is a $B_{\gamma}$-summand of $H^{0}(s_{\alpha}w,  V)$. Further,  by Lemma 2.5, we  have 
$$H^{0}(L_{\alpha}/B_{\alpha},  sl_{2, \gamma})=H^{0}(L_{\gamma}/B_{\gamma},  sl_{2, \gamma})=sl_{2, \gamma} ~ ( ~ since ~ \alpha=\gamma  ~ ).$$
This completes the proof for the case when $\gamma=\alpha$.

{\it Case 2 :}
We assume that $\alpha$ is different from $\gamma$ and 
$\langle \gamma , \alpha \rangle \neq 0$.
By using Lemma 3.1, we have $\langle \gamma , \alpha \rangle = -1$.

{\it Sub case 1:} 
Assume that $\mathfrak{g}_{0, -\gamma}$  is a $B_{\gamma}$-summand of $H^{0}(s_{\alpha}w, V)$. 
Note that $h(\gamma)\in \mathfrak{h}\bigcap \mathfrak{g}_{0, -\gamma}$. Since $\alpha\neq \gamma$, we have $\alpha(h(\gamma))=0$. Hence,  
$\mathbb{C} h(\gamma) $ is a $B_{\alpha}$-direct summand 
of $H^{0}(s_{\alpha}w, V)$. Hence, $\mathbb{C} h(\gamma) $ must be a 
$B_{\alpha}$-submodule of $H^{0}(s_{\alpha}, H^{0}(s_{\alpha}w, V))=H^{0}(w, V)$.
Since $\mathfrak{g}_{-\gamma}\subset H^{0}(w, V),$ it follows that 
$\mathfrak{g}_{0, -\gamma}$ is a $B_{\gamma}$-summand of $H^{0}(w, V)$.

{\it Sub case 2:}
Assume that $sl_{2, \gamma}$ is a $B_{\gamma}$-summand of $H^{0}(s_{\alpha}w, V)$.
Now, if $\mathfrak{g}_{\alpha +\gamma}$ is a subspace of 
$H^{0}(s_{\alpha}w, V)$, then $\mathfrak{g}_{\alpha+\gamma, \gamma}$ is an 
indecomposable $B_{\alpha}$-summand of $H^{0}(s_{\alpha}w, V)$. Since $\mathfrak{g}_{\alpha+\gamma, \gamma}$ is a $\hat{L_{\alpha}}$-module, using 
Lemma  2.5, we see that  $$H^{0}(s_{\alpha}, \mathfrak{g}_{\alpha+\gamma, \gamma})=\mathfrak{g}_{\alpha+\gamma, \gamma}\subset H^{0}(s_{\alpha},  H^{0}(s_{\alpha}w, V)).$$ 
Thus, we see that the $B_{\gamma}$-span $sl_{2,\gamma}$ of $\mathfrak{g}_{\gamma}$ 
must be a $B_{\gamma}$-summand of $H^{0}(s_{\alpha}, H^{0}(s_{\alpha}w, V))=H^{0}(w, V)$. 

On the other hand, if $\mathfrak{g}_{\alpha +\gamma}$ is not a subspace of 
$H^{0}(s_{\alpha}w, V)$, then $\mathfrak{g}_{\gamma}$ is an indecomposable 
$B_{\alpha}$-direct summand of $H^{0}(s_{\alpha}w, V)$.
Since $\langle \gamma , \alpha \rangle =-1$, by Lemma 2.2 , we have
$H^{i}(s_{\alpha} ,  \mathfrak{\gamma}) =0$ for every $i \in \mathbb{Z}_{\geq 0}$. In particular, $\mathfrak{g}_{\gamma}$ can not be a subspace of $H^{0}(s_{\alpha}, H^{0}(s_{\alpha}w, V))$. Hence, $sl_{2, \gamma}$ is not a $B_{\gamma}$-summand of $H^{0}(s_{\alpha}, H^{0}(s_{\alpha}w, V))=H^{0}(w, V)$. 

We now show that $\mathfrak{g}_{0, -\gamma}$ is a $B_{\gamma}$-summand of 
$H^{0}(s_{\alpha}, H^{0}(s_{\alpha}w, V))$. Let $S_{1}$ be the set of all simple roots $\beta$ such 
that $sl_{2, \beta}$ is a $B_{\beta}$-summand of $H^{0}(s_{\alpha}w, V)$. By the hypothesis, we have $\gamma \in S_{1}$. Since $S_{1}$ is a linearly independent subset of 
Hom$_{\mathbb{C}}(\oplus _{\beta \in S_{1}} \mathbb{C} h_{\beta}, \mathbb{C})$, there is a 
$h_{1}\in \oplus _{\beta \in S_{1}} \mathbb{C} h_{\beta}$ such that $\gamma(h_{1})=1$ and $\beta(h_{1})=0$ for every simple root $\beta$ in $S_{1}$ 
different from $\gamma$. If $\nu(h_{1})=0$ for every $\nu \in S\setminus S_{1}$, we are done. Otherwise, let $S_{2}$ be the set of all simple roots $\nu \in S\setminus S_{1} $ such 
that $\nu(h_{1})\neq 0$. Let $\nu\in S_{2}$. Then there is a 
$\beta\in S_{1}$ such that $\nu(h_{\beta})=-1$. Since $\beta \in S_{1}$, 
we have $h_{\beta}\in H^{0}(s_{\alpha}w, V)$. Hence, the Lie bracket 
$x_{-\nu}=[x_{-\nu}, h_{\beta}]$ is in $H^{0}(s_{\alpha}w, V)$. Hence, 
$\mathfrak{g}_{-\nu}$ must be a subspace of $H^{0}(s_{\alpha}w, V)$. 
Therefore, by induction applied to the simple root $\nu$, we see that 
either $\mathfrak{g}_{0, -\nu}$  or $sl_{2, \nu}$ is a 
 $B_{\nu}$-summand of  $H^{0}(s_{\alpha}w, V)$. Since 
$\nu\notin S_{1}$, we conclude that $\mathfrak{g}_{0, -\nu}$ is an indecomposable $B_{\nu}$-summand of $H^{0}(s_{\alpha}w, V)$.  Note that the fundamental dominant coweight $h(\nu)$ is in $\mathfrak{h}\bigcap \mathfrak{g}_{0, -\nu}$. 

Let $h_{2}=\sum_{\nu \in S_{2}}\nu(h_{1})h(\nu)$. Then we have $h(\gamma)=h_{1}-h_{2}$.
Therefore, $\mathbb{C} h(\gamma)$ is a 
$B_{\alpha}$-summand of $H^{0}(s_{\alpha}w, V)$. Hence, we see that 
$H^{0}(s_{\alpha}, \mathbb{C} h(\gamma))=\mathbb{C} h(\gamma)$ is a 
subspace of $H^{0}(s_{\alpha}, H^{0}(s_{\alpha}w, V))$.
Thus, we conclude that 
$\mathfrak{g}_{0, -\gamma}$ is a $B_{\gamma}$-summand 
of $H^{0}(s_{\alpha}, H^{0}(s_{\alpha}w, V))=H^{0}(w, V)$.  

{\it Case 3:}
We assume that $\langle \gamma , \alpha \rangle =0$. 
If $\mathfrak{g}_{0, -\gamma}$ is a $B_{\gamma}$-summand 
of $H^{0}(s_{\alpha}w, V)$, using $\langle \gamma , \alpha \rangle =0$
we see that it is also a $B_{\alpha}$-summand 
of $H^{0}(s_{\alpha}, H^{0}(s_{\alpha}w, V))$. For the same reason, the vector bundle on $L_{\alpha}/B_{\alpha}$ associated to the $B_{\alpha}$-module $\mathfrak{g}_{0, -\gamma}$ is trivial. Thus,  
$\mathfrak{g}_{0, -\gamma}$ is a $B_{\gamma}$-summand of 
$H^{0}(s_{\alpha}, H^{0}(s_{\alpha}w, V))=H^{0}(w, V)$. The proof  of the case when $sl_{2, \gamma}$
is a $B_{\gamma}$-summand of $H^{0}(s_{\alpha}w, V)$ is similar.

\end{proof}

We now deduce the following lemma as a consequence of Lemma 3.3.

\begin{lemma}
Let $w\in W$  and $V$ be a $B$-submodule of 
$\mathfrak{g}$ containing $\mathfrak{b}$. Then we have
$H^{i}(w, V)=0$ for every $i\geq 1$.
\end{lemma}
\begin{proof}
 The proof is by induction on $l(w)$.
If $l(w)=0$, we are done. Otherwise, we choose a simple root $\gamma\in S$ be 
such that $l(s_{\gamma}w)=l(w)-1$. By Lemma 3.3, there is a decomposition of 
the $B_{\gamma}$-module $H^{0}(s_{\gamma}w, V)$ such that every indecomposable 
$B_{\gamma}$-summand $V^{\prime}$ of $H^{0}(s_{\gamma}w , V)$ must be one of 
the 5 types given in Lemma 3.3.   In view of Lemma 2.4, any such $V^{\prime}$ is of the form 
$V^{\prime}=V^{\prime \prime }\otimes \mathbb{C}_{a \omega_{\gamma}}$ for some irreducible 
$\hat{L_{\gamma}}$-module $V^{\prime \prime }$ and an integer $a\in \{-1,  0 \}$. Hence, using Lemma 2.3, we conclude that $H^{i}(L_{\gamma}/B _{\gamma}, V^{\prime})$ is zero for every indecomposable 
$B_{\gamma}$-summand $V^{\prime}$ of  $H^{0}(s_{\gamma}w , V)$ 
and for every $i\geq 1$.
Thus, we see that 
$$ H^{i}(P_{\gamma}/B, H^{0}(s_{\gamma}w , V))=0  \hspace{1cm}  (1)   $$ 
for all $i\geq 1$.
By induction on $l(w)$, we have $H^{i}(s_{\gamma}w , V)$ is zero 
for all $i\geq 1$. Now, using  (1) and using the short exact 
sequence {\it SES} of $B$-modules, we conclude that 
$H^{i}(w, V)$ is zero for all $i\geq 1$. This completes 
the proof of lemma.
\end{proof}

We now prove the following.
\begin{lemma}
Let $w\in W$. Let $V_{1}$ be a $B$-submodule of $\mathfrak{g}$ containing $\mathfrak{b}$
and $V_{2}$ be a $B$-submodule of $V_{1}$ containing $\mathfrak{b}$. Then we have 
\begin{myenumerate}
\item
$H^{i}(w, V_{1}/V_{2})=0$  for every  $i\geq 1$. 
\item
The homomorphism $\Pi_{w}:H^{0}(w, V_{1}) \longrightarrow H^{0}(w, V_{1}/V_{2})$ of $B$-modules
induced by the natural homomorphism  $\Pi:V_{1} \longrightarrow 
V_{1}/V_{2}$ is surjective and  kernel of $\Pi_{w}$  is 
$H^{0}(w ,V_{2})$. 
\item The restriction map $r:H^{0}(w_{0}, \mathfrak{g}/\mathfrak{b})\longrightarrow  H^{0}(w, \mathfrak{g}/\mathfrak{b})$ is surjective.
\end{myenumerate}
\end{lemma}
\begin{proof}
{\it Proof of (1):}
We have the short exact sequence   
$0\longrightarrow V_{2} \longrightarrow V_{1} \longrightarrow 
V_{1}/V_{2} \longrightarrow 0$ of $B$-modules.
Applying $H^{0}(w, -)$ to this short exact sequence of $B$-modules, 
we obtain the following long exact sequence of $B$-modules: 
$$\cdots H^{i}(w, V_{2}) \longrightarrow H^{i}(w, V_{1}) 
\longrightarrow H^{i}(w, V_{1}/V_{2})\longrightarrow 
H^{i+1}(w, V_{2}) \cdots$$
By Lemma 3.4, $H^{i}(w, V_{1})$ and  
$H^{i+1}(w, V_{2})$ are zero for every $i\geq 1$.
Thus, we conclude that $H^{i}(w, V_{1}/V_{2})=0$
for every $i\geq 1$. This proves (1).

{\it Proof of (2):}
Taking $i=0$ in the above long exact sequence of $B$-modules and using 
$H^{1}(w, V_{2})=0$ ( see Lemma 3.4 ),  we obtain the following short exact sequence of 
$B$-modules: 
$$0\longrightarrow H^{0}(w, V_{2}) \longrightarrow H^{0}(w, V_{1}) 
\longrightarrow H^{0}(w, V_{1}/V_{2})\longrightarrow 0.$$
This proves (2).
  
{\it Proof of (3):}   We have the following commutative diagram of $B$-modules:
  
 \begin{center}  

$\xymatrix{ H^{0}(w_{0}, \mathfrak{g}) 
\ar[d]_{\Pi_{w_{0}}}\ar[r]^{res} & H^{0}(w, \mathfrak{g}) \ar[d]_{\Pi_{w}}\\ 
H^{0}(w_{0}, \mathfrak{g}/\mathfrak{b}) \ar[r]^{r} & H^{0}(w, \mathfrak{g}/\mathfrak{b}).}$

\end{center}

By (2), $\Pi_{w}:H^{0}(w, \mathfrak{g})\longrightarrow H^{0}(w, \mathfrak{g}/\mathfrak{b})$
is surjective. By Lemma 2.5(1), the restriction map $res:H^{0}(w_{0}, \mathfrak{g})\longrightarrow H^{0}(w, \mathfrak{g})$ is an isomorphism.  Thus, $r:H^{0}(w_{0}, \mathfrak{g}/\mathfrak{b})\longrightarrow  H^{0}(w, \mathfrak{g}/\mathfrak{b})$ is surjective. 

This completes the proof of the lemma.

\end{proof}

The following is a useful consequence of Lemma 3.5. 

\begin{corollary}
Let $w\in W$ and $\alpha\in R^{+}$.  Then we have
$H^{i}(w, \alpha)=0$ for every $i\geq 1$.
\end{corollary}
\begin{proof}
Let $V_{1}:=\oplus_{\mu\leq \alpha}\mathfrak{g}_{\mu}$ denote the 
direct sum of the weight spaces of $\mathfrak{g}$ of weights $\mu$ satisfying
$\mu\leq \alpha$ and let $V_{2}:=\oplus_{\mu < \alpha}\mathfrak{g}_{\mu}$ 
denote the direct sum of the weight spaces of $\mathfrak{g}$ of weights 
$\mu$ satisfying $\mu < \alpha$. 
It is clear that $V_{2}$ is a $B$-submodule of $\mathfrak{g}$ containing $\mathfrak{b}$ and $V_{1}$ is a $B$-submodule of $\mathfrak{g}$ containing $V_{2}$.
Since the $B$-module $V_{1}/V_{2}$ is isomorphic to $\mathbb{C}_{\alpha}$, we have 
$H^{i}(w, \alpha)= H^{i}(w, V_{1}/V_{2})$ for every $i\geq 1$.
Hence, by Lemma 3.5(1), $H^{i}(w, \alpha)=0$ for every 
$i\geq 1$.
This completes the proof of the corollary.
\end{proof}

\section {Main Results in the simply laced case} 
In this section, we prove the main results in the simply laced case.
Through out this section, we assume that $G$ is simply laced.

We first prove the following:

\begin{theorem}
Let $w\in W$ and let $\alpha_{0}$ denote the highest root of $G$ with 
respect to $T$ and $B^{+}$.  Then we have 
\begin{myenumerate}
\item
$H^{i}(X(w), T_{G/B})=0$ for every 
$i\geq 1$. 
\item
$H^{0}(X(w) , T_{G/B})$ is the adjoint representation 
$\mathfrak{g}$ of  $G$ if and only if  
$w^{-1}(\alpha_{0})$ is a negative root.
\end{myenumerate}
\end{theorem}
\begin{proof}
Since the tangent space of $G/B$ at the point $idB$ is 
$\mathfrak{g}/\mathfrak{b}$, the tangent bundle $T_{G/B}$ 
is the homogeneous vector bundle $\mathcal{L}(\mathfrak{g}/\mathfrak{b})$ 
on $G/B$ associated to the $B$-module $\mathfrak{g}/\mathfrak{b}$.

Hence, it is sufficient to prove the following:
\begin{myenumerate}
\item
$H^{i}(w, \mathfrak{g}/\mathfrak{b})=0$ for every $i\geq 1$. 
\item
$H^{0}(w , \mathfrak{g}/\mathfrak{b})$ is the adjoint representation 
$\mathfrak{g}$ of $G$ if and only if 
$w^{-1}(\alpha_{0}) < 0$. 
\end{myenumerate}

The proof  of (1) follows from Lemma 3.5(1).

To prove (2), we first note that 
the natural projection $\Pi:\mathfrak{g} \longrightarrow 
\mathfrak{g}/\mathfrak{b}$ of $B$-modules induces a 
homomorphism $\Pi_{w}:H^{0}(w, \mathfrak{g})
\longrightarrow H^{0}(w, \mathfrak{g}/\mathfrak{b})$ of $B$-modules.
Since the evaluation map $ev:H^{0}(w, \mathfrak{g})\longrightarrow 
\mathfrak{g}$ is an isomorphism ( see Lemma 2.5(1) ), using Lemma 3.5(2), we have the following 
short exact sequence of $B$-modules:
$$0\longrightarrow H^{0}(w, \mathfrak{b}) \longrightarrow \mathfrak{g} 
\longrightarrow H^{0}(w, \mathfrak{g}/\mathfrak{b})\longrightarrow 0.$$
Taking $-\alpha_{0}$-weight spaces, we obtain the following short eact 
sequence of vector spaces:
$$0\longrightarrow H^{0}(w, \mathfrak{b})_{-\alpha_{0}} \longrightarrow \mathfrak{g}_{-\alpha_{0}} \longrightarrow H^{0}(w, \mathfrak{g}/\mathfrak{b})_{-\alpha_{0}}\longrightarrow 0.$$
Since $\mathfrak{g}_{-\alpha_{0}}$ is one dimensional,  $H^{0}(w, \mathfrak{b})_{-\alpha_{0}}$ is zero if and only if $H^{0}(w, \mathfrak{g}/\mathfrak{b})_{-\alpha_{0}}$ is non-zero. 
Since $\mathfrak{g}$  is an irreducible $G$-module of highest weight $\alpha_{0},$ 
the $B$-stable line in $\mathfrak{g}$  is unique and it is the one dimensional subspace 
$\mathfrak{g}_{-\alpha_{0}}$. Therefore, it follows that  
$H^{0}(w, \mathfrak{b})$ is zero if and only if 
$H^{0}(w, \mathfrak{b})_{-\alpha_{0}}$ is zero.

We now show that $H^{0}(w, \mathfrak{g}/\mathfrak{b})_{-\alpha_{0}}$ is non-zero if and only if 
$w^{-1}(-\alpha_{0})\in R^{+}$.  For this, we first note that  the $B$-module 
$\mathfrak{g}/\mathfrak{b}$ has a composition series of $B$-modules with each successive simple quotient is isomorphic to $\mathbb{C}_{\alpha},$ where $\alpha$ is running over positive roots.  
By taking global sections $H^{0}(X(w),\mathcal{L}(w, \mathbb{C}_{\alpha}))$ 
and applying Corollary 3.6,  we see that the $B$-module $H^{0}(w, \mathfrak{g}/\mathfrak{b})$ has a filtration of $B$-modules with each successive quotient is isomorphic to $H^{0}(w, \alpha)$
for some positive root $\alpha$.  Therefore, it follows that $H^{0}(w, \mathfrak{g}/\mathfrak{b})_{-\alpha_{0}}$ is non-zero if and only if  $H^{0}(w, \alpha)_{-\alpha_{0}}$ is non-zero for some positive root $\alpha.$  On the other hand, for any positive root $\alpha$, there is a $v\in W$ such that $v(\alpha_{0})=\alpha$. Without loss of generality,  we may assume that $v$ is of minimum length among such elements.  It follows from the Demazure character formula that if  
$H^{0}(w , \alpha_{0} )_{\mu}\neq 0$, then $\mu$ is in the convex hull of the set $\{x(\alpha_{0}): x\leq wv\}$ ( see [4, Theorem 3.3.8, p.97, equation (3)] and [11, Proposition 14.18(b), p.379] ).  Note that  the convention for the signature of the weights in the Demazure character formula in this paper is  the same as the one in [11].  Further,  using  the above arguments, we see that $\mathbb{C}_{\alpha}$ is a $B$-submodule of $H^{0}(v, \alpha_{0})$. Therefore, by using {\it SES }, we see that $H^{0}(w , \alpha)$ is a $B$-submodule of $H^{0}(w v , \alpha_{0} )$.  Hence, every weight $\mu$ 
of  $H^{0}(w, \alpha)$ satisfies $\mu\geq w(\alpha)$. Clearly, $w(\alpha)\geq -\alpha_{0}$.  Hence, $H^{0}(w, \alpha)_{-\alpha_{0}}$ is non zero if and only if $w(\alpha)=-\alpha_{0}$. Thus, we conclude that  $H^{0}(w, \mathfrak{g}/\mathfrak{b})_{-\alpha_{0}}$ is non-zero if and only if $w^{-1}(-\alpha_{0})\in R^{+}$. 

Summarising the above arguments, we conclude that 
$H^{0}(w, \mathfrak{b})$ is zero if and only if $w^{-1}(\alpha_{0})$ is a 
negative root. Since $Ker(\Pi_{w})=H^{0}(w, \mathfrak{b})$ ( see Lemma 3.5(2) ),  we see that 
$Ker(\Pi_{w})$ is zero if and only if $w^{-1}(\alpha_{0})$ is a negative 
root. This completes the proof of (2).
\end{proof}

The following theorem describes the automorphism group of a smooth Schubert variety in 
the simply laced case.

\begin{theorem}
Let $w\in W$ be such that $X(w)$ is smooth. Let $A_{w}$ denote 
the connected component of the group of all automorphisms of $X(w)$ containing the  identity automorphism. For the left action of $G$ on $G/B$,  let  $P_{w}$ denote the stabiliser of 
$X(w)$ in $G.$ Then we have 
\begin{enumerate}
\item 
The homomorphism $\phi_{w}:P_{w} \longrightarrow A_{w}$ induced by the action of $P_{w}$ on $X(w)$ is surjective. 
\item
$\phi_{w}:P _{w}\longrightarrow A_{w}$ is an isomorphism if and only if  
$w^{-1}(\alpha_{0})$ is a negative root.
\end{enumerate}
\end{theorem}
\begin{proof}
Let $T_{w}$ denote the tangent bundle of $X(w)$. 
By [15, Theorem 3.7, p.17], we see that  $A_{w}$ is an algebraic group. 
Further, by [15, Lemma 3.4, p.13], it follows that the Lie algebra of $A_{w}$ is isomorphic to the 
space of all global sections $H^{0}(X(w), T_{w})$. Since $T_{w}$ is a vector subbundle of $T_{G/B}$, 
we have an injective homomorphism $i:H^{0}(X(w), T_{w})\hookrightarrow H^{0}(X(w), T_{G/B})=H^{0}(w, \mathfrak{g}/\mathfrak{b})$ of $B$-modules.  
We first note that $\phi_{w}$ induces a
homomorphism $\psi_{w}:\mathfrak{p}_{w}\longrightarrow H^{0}(X(w), T_{w})$
of Lie algebras, where $\mathfrak{p}_{w}$ is the Lie algebra of $P_{w}$. 

We now prove (1). By Lemma  3.5(3), the  restriction map
$r:\mathfrak{g}=H^{0}(w_{0}, \mathfrak{g}/\mathfrak{b})\longrightarrow
H^{0}(w, \mathfrak{g}/\mathfrak{b})$ is surjective. 
Since $P_{w}$ is a parabolic subgroup of $G$ containing $B$, the Lie algebra $\mathfrak{p}_{w}$ is 
a Lie subalgebra of $\mathfrak{g}=H^{0}(w_{0}, \mathfrak{g}/\mathfrak{b})$  containing $\mathfrak{b}$. 
Further, since $X(w)$ is a $P_{w}$-stable subvariety for the left action of $P_{w}$ on $G/B$,
we have the following commutative diagram of $B$-modules:

\begin{center}

$\xymatrix{ 
\mathfrak{p}_{w}\ar@{^{(}->}[d]\ar[r]^ {\psi_{w}} &
H^0(X(w), T_{w})\ar@{^{(}->}[d]^{i}
\\ H^0(w_{0}, \mathfrak g/\mathfrak b) \ar[r]^{r}  & H^0(w, \mathfrak g/ \mathfrak b )}
$

\end{center}

Now, let $\mathfrak{q}=r^{-1}(H^{0}(X(w), T_{w}))$. Note that since $\mathfrak{q}$ is a $B$-submodule  of $\mathfrak{g}$ containing $\mathfrak{p}_{w}$, $\mathfrak{q}$ is a parabolic subalgebra of $\mathfrak{g}$ containing $\mathfrak{p}_{w}$. We denote the restriction of $r$ to $\mathfrak{q}$ also by $r$.

We now show that $\mathfrak{p}_{w}=\mathfrak{q}$. Since $\mathfrak{g}=H^{0}(w_{0}, \mathfrak{g}/\mathfrak{b}),$ every element $x\in \mathfrak{q}\subset \mathfrak{g}$ is a tangent vector field on $G/B$. Further, by the definition of $\mathfrak{q},$ the ideal sheaf of $X(w)$  is $x$-stable for every $x\in \mathfrak{q}.$ Therefore, $\mathfrak{q}$ is contained in the Lie algebra $\mathfrak{p}_{w}$ of the stabiliser $P_{w}$ of $X(w)$ in $G$. Thus, we have $\mathfrak{p}_{w}=\mathfrak{q}.$  

Clearly,  $r:\mathfrak{q}\longrightarrow H^{0}(X(w), T_{w})$  is a homomorphism of Lie algebras. Therefore $\psi_{w}:\mathfrak{p}_{w}\longrightarrow H^{0}(X(w), T_{w})$ is surjective. By counting the dimensions of the images both at the level of algebraic groups and at the level of their Lie algebras, 
we conclude that $\phi_{w}:P_{w}\longrightarrow A_{w}$ is surjective. 

We now prove (2).

Assume that $w^{-1}(\alpha_{0})$ is a negative root.  Then by  the proof of Theorem 4.1(2),  the homomorphism $\Pi_{w}:H^{0}(w, \mathfrak{g})=\mathfrak{g}\longrightarrow H^{0}(w,\mathfrak{g}/\mathfrak{b})=\mathfrak{g}$ of $B$-modules induced by the natural homomorphism $\mathfrak{g}\longrightarrow \mathfrak{g}/\mathfrak{b}$  is an isomorphism.  Therefore,  $H^{0}(w, \mathfrak{g}/\mathfrak{b})$  has a unique $B$- stable line, namely $\mathfrak{g}_{-\alpha_{0}}$.  Note that since 
$w^{-1}(\alpha_{0}) < 0$, we have $w\neq 1$. Therefore, the action of $P_{w}$ on $X(w)$ is non trivial. Hence,  the homomorphism 
$\psi_{w}:\mathfrak{p}_{w}\longrightarrow H^{0}(X(w), T_{w})$ of $B$-modules is non-zero.  Therefore, the $B$-stable line $H^{0}(w, \mathfrak{g}/\mathfrak{b})_{-\alpha_{0}}$ is in the image 
$$\psi_{w}(\mathfrak{p}_{w})\subset H^{0}(X(w), T_{w})\subset H^{0}(w, \mathfrak{g}/\mathfrak{b}).$$
Hence, we have $\mathfrak{g}_{-\alpha_{0}}\bigcap ker(\psi_{w})=0$. Thus, 
$\psi_{w}:\mathfrak{p}_{w}\longrightarrow H^{0}(X(w), T_{w})$ is injective.  Since the base field is $\mathbb{C},$ it follows that the kernel of $\psi_{w}$ is the Lie algebra of the kernel of  $\phi_{w}.$   Therefore, $\phi_{w}:P_{w}\longrightarrow A_{w}$ is injective. Now, that
$\phi_{w}$ is an isomorphism follows from (1). This is again because the base field is $\mathbb{C}.$

Conversely, if $\phi_{w}:P_{w}\longrightarrow A_{w}$ is an isomorphism, then the induced homomorphism $\psi_{w}:\mathfrak{p}_{w}\longrightarrow H^{0}(X(w), T_{w})\subset H^{0}(w, \mathfrak{g}/\mathfrak{b})$ is injective. In particular, the $-\alpha_{0}$-weight space $H^{0}(w,\mathfrak{g}/\mathfrak{b})_{-\alpha_{0}}$
is non-zero. By using arguments similar to  the proof of Theorem 4.1,   we conclude that 
$w^{-1}(\alpha_{0})$ is a negative root.
\end{proof}

In the following Corollary, we describe the kernel of $\phi_{w}:P_{w} \longrightarrow A_{w}$. 
Let $J_{w}:=\{\alpha\in S: s_{\alpha}\leq w\}$ and 
let $T(w):=\bigcap_{\alpha\in J_{w}}Ker(\alpha)=\{t\in T: \alpha(t)=1 ~ for  ~ \alpha\in J_{w}\}$.
Let $U$  ( respectively, $U^{+}$ ) denote the unipotent radical of $B$ ( respectively, $B^{+}$ ) and  let $U_{-\alpha}$  ( respectively,  $U^{+}_{\alpha}$ ) be the the root subgroup of $U$ ( respectively, $U^{+}$ ) normalised by $T$ corresponding to $\alpha\in R^{+}$. Further, 
let $U_{\leq w}$ be the subgroup of $U$ generated by $\{U_{-\alpha}: \alpha\in  R^{+}\setminus (\bigcup_{v \leq w}R^{+}(v^{-1}))\}$. Then we have 

\begin{corollary}
The kernel $K_{w}$ of $\phi_{w}:P_{w} \longrightarrow A_{w}$ is generated by $T(w)$ and $U_{\leq w}$. 
 \end{corollary}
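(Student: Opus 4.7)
The plan is to leverage the identification $\mathrm{Lie}(K_w) = H^0(w, \mathfrak{b})$, which falls out of the proof of Theorem~3.8 (since $H^0(w, \mathfrak{b}) \subset \mathfrak{b} \subset \mathfrak{p}_w$, we have $\ker\psi_w = \mathfrak{p}_w \cap H^0(w, \mathfrak{b}) = H^0(w, \mathfrak{b})$), combined with a group-theoretic splitting of closed $T$-stable subgroups of $B$. First, because $K_w$ fixes $idB \in X(w)$, we get $K_w \subset \mathrm{Stab}_G(idB) = B$; and since $K_w$ is normal in $P_w$, it is normalised by $T$. A standard contraction argument---for $k = tu \in K_w$ with $t \in T$ and $u \in U$, choosing a one-parameter subgroup $\lambda\colon \mathbb{G}_m \to T$ that is positive on all roots occurring in $u$, the limit $\lim_{z\to\infty} \lambda(z)k\lambda(z)^{-1} = t$ must lie in the closed group $K_w$---then yields $K_w = (K_w \cap T)\cdot(K_w \cap U)$. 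It therefore suffices to identify the two factors separately.

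For $K_w \cap T$: parametrise the Schubert cell $BvB/B$ as $\prod_{\beta \in R^+(v^{-1})} U_{-\beta} \cdot vB$; then $t \in T$ acts on this cell by scaling each $U_{-\beta}$-coordinate by $\beta(t)^{-1}$, so $t$ acts trivially on $X(w) = \bigsqcup_{v \leq w} BvB/B$ iff $\beta(t)=1$ for every $\beta \in \bigcup_{v \leq w} R^+(v^{-1})$. Since each such $\beta$ has support in $J_v \subset J_w$ (a standard lemma on inversion sets) and conversely $J_w \subset \bigcup_{v \leq w} R^+(v^{-1})$ (take $v = s_\alpha$ for $\alpha \in J_w$), this is equivalent to $\alpha(t) = 1$ for all $\alpha \in J_w$, i.e.\ $t \in T(w)$. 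For $K_w \cap U$: $u \in U$ fixes $vB$ iff $v^{-1}uv \in B$ iff $u \in U \cap vBv^{-1} = \prod_{\alpha \notin R^+(v^{-1})} U_{-\alpha}$, and intersecting over $v \leq w$ gives $K_w \cap U \subset U_{\leq w}$. For the reverse containment $U_{\leq w} \subset K_w$, by the Lie-group/Lie-algebra correspondence in characteristic zero it is enough to show $\mathfrak{g}_{-\alpha} \subset H^0(w, \mathfrak{b})$ for every $\alpha \in R^+ \setminus \bigcup_{v \leq w} R^+(v^{-1})$; under the inclusion $H^0(w, \mathfrak{b}) \hookrightarrow \mathfrak{g}$ coming from Lemma~3.5(2), this is the statement that the global vector field $\xi_{x_{-\alpha}}$ on $G/B$ vanishes identically on $X(w)$.

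The main obstacle is this last step: one must extend the vanishing of $\mathrm{Ad}(g^{-1})(x_{-\alpha}) \in \mathfrak{b}$ from the $T$-fixed points $vB$ (where it reduces to the hypothesis $v^{-1}(\alpha) \in R^+$) to an arbitrary point $u'vB$ of a cell, with $u' \in \prod_{\beta \in R^+(v^{-1})} U_{-\beta}$. For this one expands
\[
\mathrm{Ad}(u'^{-1})(x_{-\alpha}) = x_{-\alpha} + \sum_{\substack{\beta \in R^+(v^{-1}) \\ \alpha+\beta \in R^+}} c_\beta\, x_{-\alpha-\beta},
\]
no higher-order commutators surviving thanks to the simply laced identity $\mathrm{ad}(x_{-\beta})^2(x_{-\alpha})=0$ (a consequence of Lemma~3.1), and then verifies for each surviving $\beta$ that $v^{-1}(-\alpha-\beta) \in R^-$. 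The combinatorial check $\alpha + \beta \notin R^+(v^{-1})$ underlying this is precisely where the simply laced hypothesis enters decisively. Assembling the two factor identifications then gives $K_w = T(w)\cdot U_{\leq w} = \langle T(w), U_{\leq w}\rangle$, as required.
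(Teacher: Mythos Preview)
Your overall strategy is sound and differs from the paper's: you work at the Lie-algebra level, identifying $\mathrm{Lie}(K_w) = H^0(w, \mathfrak{b})$ and checking directly that the vector field attached to $x_{-\alpha}$ vanishes on each Schubert cell, whereas the paper works at the group level, proving by induction on $l(w)$ that $U_{-\beta}$ acts trivially on $X(w)$ via commutator identities. Your reductions $K_w \subset B$ (since $K_w$ fixes $idB$) and $K_w = (K_w \cap T)\cdot(K_w \cap U)$ by a one-parameter contraction are clean and in fact sharper than the paper's treatment, which handles $U^+_\beta \cap K_w = \{1\}$ by a separate argument.

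There is, however, a genuine gap in your final step. The expansion
\[
\mathrm{Ad}(u'^{-1})(x_{-\alpha}) = x_{-\alpha} + \sum_{\beta} c_\beta\, x_{-\alpha-\beta}
\]
is incorrect: the identity $\mathrm{ad}(x_{-\beta})^2(x_{-\alpha}) = 0$ only rules out repeating the \emph{same} $\beta$, but mixed iterated brackets $[x_{-\beta_2}, [x_{-\beta_1}, x_{-\alpha}]]$ with $\beta_1 \neq \beta_2$ certainly survive (e.g.\ in type $A_3$ with $\alpha = \alpha_1$, $\beta_1 = \alpha_2$, $\beta_2 = \alpha_3$). So $\mathrm{Ad}(u'^{-1})(x_{-\alpha})$ picks up components in $\mathfrak{g}_{-\alpha - \gamma}$ for every root $\alpha + \gamma$ reachable from $\alpha$ by a chain of additions of roots $\beta_i \in R^+(v^{-1})$, and each of these must be checked.

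The fix is to iterate your combinatorial check, whose proof you also omit but which goes as follows: in the simply laced case $\alpha + \beta = s_\beta(\alpha)$, so $v^{-1}(\alpha+\beta) < 0$ would force $(s_\beta v)^{-1}(\alpha) < 0$, contradicting the hypothesis applied to $v' = s_\beta v \leq w$ (here $s_\beta v < v$ in Bruhat order since $\beta \in R^+(v^{-1})$). The same argument shows more: $\alpha + \beta \notin \bigcup_{v' \leq w} R^+(v'^{-1})$, since for any $v' \leq w$ either $\beta \in R^+(v'^{-1})$ and the reflection argument applies, or $v'^{-1}(\beta) > 0$ and then $v'^{-1}(\alpha + \beta) > 0$ directly. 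Thus $\alpha + \beta$ again satisfies the full hypothesis on $\alpha$, and you may iterate to handle all higher-order terms. This iterated check is precisely the Lie-algebra shadow of the paper's inductive commutator calculation; once stated correctly, both arguments rest on the same Bruhat-order fact, and the simply laced hypothesis enters in the same place (the identity $s_\beta(\alpha)=\alpha+\beta$).
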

\begin{proof}
Let $R^{+}(w^{-1}):=\{\beta_{1}, \beta_{2}, \cdots, \beta_{l(w)}\}$.
Note that for every $1\leq j\leq l(w)$, we have $\beta_{j}=\sum_{\alpha \in S}m_{\alpha}\alpha$ where each $m_{\alpha}\in \mathbb{Z}_{\geq 0}$ and $m_{\alpha}=0$ unless $s_{\alpha}\leq w$. Therefore, it follows that $T(w) \subset Ker(\beta_{j})$ for every $1\leq j \leq l(w)$. Hence the action of $T(w)$ on $BwB/B=\Pi_{j=1}^{l(w)}U_{-\beta_{j}}wB/B$ is trivial. Here, we note that product is independent of the ordering  of $R^{+}(w^{-1})$ ( see [11, II, p.354] ). Since $BwB/B$ is an open dense subset of $X(w),$ we conclude that the action of $T(w)$ on $X(w)$ is trivial.

If the action of $U_{-\beta}$ on $X(w)$ is trivial for some positive root $\beta$, then it fixes $vB/B$ 
for every $v\leq w$. Hence, $v^{-1}(\beta)\in R^{+}$ for every $v\leq w.$ 

Conversely, if  $\beta\in R^{+}$ is such that $v^{-1}(\beta) > 0$ for every $v\leq w$, then we show that the action of $U_{-\beta}$ on $X(w)$ is trivial. We show this by induction on $l(w)$. So, fix such a $\beta$.

If $l(w)=1$, then we have $w=s_{\alpha}$ for some simple root $\alpha$. For any $u_{1}\in U_{-\alpha}$ and $u_{2}\in U_{-\beta}$, the commutator $u_{2}^{-1}u_{1}^{-1}u_{2}u_{1}\in \Pi_{\gamma}U_{-\gamma}$, where the product is taken over all positive roots $\gamma$ of the form $\gamma=i\alpha+j\beta$ with $i, ~  j\in \mathbb{N}$ (see, [10, p.203] and [10, p.209 -  p.215] ). First note that $\alpha\neq \beta,$ since $s_{\alpha}(\beta)\in R^{+}.$ Further, if $u_{1}$ and $u_{2}$ commute with each other, then $u_{1}wB/B$ is fixed by $u_{2}$. Therefore, we may assume that they do not commute with each other. 

Since $G$ is simply laced, we have  $\langle \alpha , \beta \rangle=\langle \beta , \alpha \rangle =-1$. Therefore, it follows that 
$\alpha+\beta=s_{\alpha}(\beta)\in R^{+}$  and $u_{2}^{-1}u_{1}^{-1}u_{2}u_{1}\in U_{-(\alpha+\beta)}$. It is easy to see that the subgroup $U_{-(\alpha+\beta)}$ fixes the point $s_{\alpha}B/B$. Hence, by the above arguments, it follows that $$u_{2}u_{1}wB/B=u_{1}u_{2}(u_{2}^{-1}u_{1}^{-1}u_{2}u_{1})wB/B=u_{1}u_{2}wB/B=u_{1}wB/B.$$

We may assume that $l(w)\geq 2$  and choose a simple root $\alpha$ such that $l(s_{\alpha}w)=l(w)-1$.  Let $v=s_{\alpha}w$ and let $n_{\alpha}$ be a representative of $s_{\alpha}$ in $N_{G}(T).$  We have $BwB/B=U_{-\alpha}n_{\alpha}U_{v}vB/B$, where $U_{v}=\Pi_{\gamma\in R^{+}(v^{-1})}U_{-\gamma}$ ( see [11, II, p.354] ).  Note that by induction, the action of $U_{-\beta}$ on 
$U_{v}vB/B$ is trivial. Therefore,  by the above arguments we may assume that the subgroups $U_{-\alpha}$ and $U_{-\beta}$ do not commute with each other. By the proof in the case of length one, for every $u_{2}\in U_{-\beta},$ $u_{1}\in U_{-\alpha}$ and $u\in U_{v}$, we see that $$u_{2}\cdot (u_{1}n_{\alpha}uvB/B)=u_{1}n_{\alpha}\cdot (u^{\prime}\cdot uvB/B),$$ for some 
$u^{\prime}\in U_{-s_{\alpha}(\beta)}U_{-\beta}$. Note that for every $v_{1}\leq v$, both $v_{1}^{-1}(\beta)$ and 
$v_{1}^{-1}(s_{\alpha}(\beta))$ are positive roots. Therefore, by the induction hypothesis, $u^{\prime}$ fixes the point $uvB/B$.
Thus, we conclude that the action of $U_{-\beta}$ on the Schubert cell $BwB/B$ is trivial. Since, $BwB/B$ is an open dense subset of $X(w)$, it follows that the action of $U_{-\beta}$ on $X(w)$ is trivial.    

Finally, we show that $U^{+}_{\beta}\bigcap K_{w}=\{1\}$ for every positive root $\beta$ such that $U^{+}_{\beta}$ is a subgroup of $P_{w}$. First note that if  $U^{+}_{\beta}\bigcap K_{w}$ is non trivial, then by conjugating by $T$, we could prove that $U^{+}_{\beta}\subset K_{w}.$   Further, since $G$ is simply laced, if $\beta$ is not a simple root, then there is a simple root $\alpha$ such that  $\langle \beta , \alpha \rangle =1.$  Therefore, by using similar arguments as above, we conclude that  $1\neq u_{2}^{-1}u_{1}^{-1}u_{2}u_{1}\in U^{+}_{\beta-\alpha},$ for some $u_{1}\in U_{-\alpha}$ and some $u_{2}\in U^{+}_{\beta}.$ Hence, $U^{+}_{\beta-\alpha}\bigcap K_{w}$ is non trivial. Therefore, $U^{+}_{\beta-\alpha}\subset K_{w}.$ Proceeding this way, we are able to find 
a simple root $\gamma$  such that $U^{+}_{\gamma}\subset K_{w}.$  Hence, by using the arguments 
similar to the proof of  Theorem 4.2, we conclude that $K_{w}$ contains a representative $n_{\gamma}$  of  $s_{\gamma}$ in $N_{G}(T).$  Thus, we have $l(s_{\gamma}w)=l(w)-1.$ This is a contradiction to the fact that if $\alpha$ is a simple root such that  $l(s_{\alpha}w)=l(w)-1,$ then the action of  
$U^{+}_{\alpha}$ on $X(w)$ is non trivial.   

This completes the proof .
\end{proof}

Again we assume that  $X(w)$ is smooth.
Let $\mathcal{L}_{\alpha_{0}}$ denote the line bundle on 
$X(w)$ associated to $\alpha_{0}$. 
Consider the left action of $T$ on $G/B$. Note that $X(w)$ is 
stable under $T$. Let $\lambda$ be a dominant character of $T$. In the 
following Corollary, we use the notion of semi-stable points introduced  
by Mumford ( see [17]  ). We denote by $X(w^{-1})_{T}^{ss}(\mathcal{L}_{\alpha_{0}})$ the set of all semi-stable points of $X(w^{-1})$ with respect to the $T$-linearised 
line bundle $\mathcal{L}_{\alpha_{0}}.$  

Then we have

\begin{corollary}
Let $A_{w}$,  $P_{w}$  and 
$\phi_{w}:P_{w} \longrightarrow A_{w}$  be as in the hypothesis of  Theorem 4.2. 
Then $\phi_{w}:P_{w} \longrightarrow A_{w}$ is an isomorphism  if and only if the set 
$X(w^{-1})_{T}^{ss}(\mathcal{L}_{\alpha_{0}})$ of semi-stable points is non-empty.
\end{corollary}
\begin{proof}
By Theorem 4.2, we see that $\phi_{w}:P_{w} \longrightarrow A_{w}$ is an isomorphism 
if and only if  $w^{-1}(\alpha_{0})$ is a negative root. By [13, Lemma 2.1], we note that
$w^{-1}(\alpha_{0}) < 0$ if and only if 
$X(w^{-1})_{T}^{ss}(\mathcal{L}_{\alpha_{0}})$ is non empty.
This completes the proof of the corollary.
\end{proof}

The following Corollary connects the set 
$X(w^{-1})_{T}^{ss}(\mathcal{L}_{\alpha_{0}})$ of semi-stable points with the 
vanishing of all cohomology modules of the vector bundle 
$\mathcal{L}(w, \mathfrak{b})$ on $X(w)$.  
\begin{corollary}
The set $X(w^{-1})_{T}^{ss}(\mathcal{L}_{\alpha_{0}})$ of semi-stable points is 
non-empty if and only if $H^{i}(w, \mathfrak{b})=0$ for 
every $i\in \mathbb{Z}_{\geq 0}$.
\end{corollary}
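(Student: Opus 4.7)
The plan is to string together the ingredients already in the paper: Lemma 3.4 kills higher cohomology of $\mathfrak{b}$, the proof of Theorem 3.7(2) identifies when $H^0(w,\mathfrak{b})$ vanishes, and Corollary 3.10 (via [13, Lemma 2.1]) translates this vanishing into the non-emptiness of the semi-stable locus.

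First I would observe that $\mathfrak{b}$ is itself a $B$-submodule of $\mathfrak{g}$ containing $\mathfrak{b}$, so Lemma 3.4 applies with $V = \mathfrak{b}$ and yields $H^{i}(w,\mathfrak{b}) = 0$ for every $i \geq 1$ unconditionally. Consequently the statement "$H^{i}(w,\mathfrak{b}) = 0$ for every $i \in \mathbb{Z}_{\geq 0}$" is equivalent to the single condition $H^{0}(w,\mathfrak{b}) = 0$.

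Next I would reduce the vanishing of $H^{0}(w,\mathfrak{b})$ to a root-theoretic condition on $w$. By Lemma 2.6 applied with $V = \mathfrak{b} \subset \mathfrak{g} = V'$, the evaluation map realises $H^{0}(w,\mathfrak{b})$ as a $B$-submodule of $\mathfrak{g}$. Since $\mathfrak{g}$ is an irreducible $G$-module of highest weight $\alpha_{0}$, its unique $B$-stable line is $\mathfrak{g}_{-\alpha_{0}}$ (the argument is precisely the one used in the proof of Theorem 3.7(2)). By the Lie–Kolchin theorem, if $H^{0}(w,\mathfrak{b})$ were non-zero then it would contain a $B$-stable line, forcing $H^{0}(w,\mathfrak{b})_{-\alpha_{0}} \neq 0$. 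Hence $H^{0}(w,\mathfrak{b}) = 0$ if and only if $H^{0}(w,\mathfrak{b})_{-\alpha_{0}} = 0$. Using the short exact sequence of $B$-modules
\[
(0) \longrightarrow H^{0}(w,\mathfrak{b}) \longrightarrow \mathfrak{g} \longrightarrow H^{0}(w,\mathfrak{g}/\mathfrak{b}) \longrightarrow (0)
\]
from the proof of Theorem 3.7(2) and taking $-\alpha_{0}$-weight spaces, the condition $H^{0}(w,\mathfrak{b})_{-\alpha_{0}} = 0$ becomes equivalent to $H^{0}(w,\mathfrak{g}/\mathfrak{b})_{-\alpha_{0}} \neq 0$, which by the filtration argument of Theorem 3.7(2) holds precisely when $w^{-1}(\alpha_{0})$ is a negative root.

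Finally, I would invoke Corollary 3.10, whose proof cites [13, Lemma 2.1] to equate $w^{-1}(\alpha_{0}) < 0$ with non-emptiness of $X(w^{-1})_{T}^{ss}(\mathcal{L}_{\alpha_{0}})$. Chaining the three equivalences
\[
H^{i}(w,\mathfrak{b}) = 0 \ \forall i \ \Longleftrightarrow\ H^{0}(w,\mathfrak{b}) = 0 \ \Longleftrightarrow\ w^{-1}(\alpha_{0}) < 0 \ \Longleftrightarrow\ X(w^{-1})_{T}^{ss}(\mathcal{L}_{\alpha_{0}}) \neq \emptyset
\]
completes the proof. There is essentially no main obstacle here: the corollary is a direct assembly of previously established facts, and the only thing to be careful about is that the uniqueness of the $B$-stable line argument is applied to $H^{0}(w,\mathfrak{b})$ viewed inside $\mathfrak{g}$ via the evaluation map of Lemma 2.6, rather than inside $\mathfrak{b}$ itself.
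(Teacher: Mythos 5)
Your proposal is correct and follows essentially the same chain of equivalences as the paper: Lemma 3.4 disposes of the higher cohomology, the argument from the proof of Theorem 3.7(2) (unique $B$-stable line $\mathfrak{g}_{-\alpha_{0}}$ plus the weight-space filtration) gives $H^{0}(w,\mathfrak{b})=0$ if and only if $w^{-1}(\alpha_{0})<0$, and [13, Lemma 2.1] converts that into non-emptiness of $X(w^{-1})_{T}^{ss}(\mathcal{L}_{\alpha_{0}})$. The only (harmless) difference is that the paper routes the last step through Theorem 3.8 and Corollary 3.10, i.e.\ through the statement about $\phi_{w}$, whereas you cite the underlying GIT fact directly, which incidentally makes no use of the smoothness of $X(w)$ or of the automorphism-group results.
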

\begin{proof}
By Lemma 3.4, $H^{j}(w, \mathfrak{b})=0$ for every $j\geq 1$.
By using the proof of Theorem 4.1, we see that $H^{0}(w, \mathfrak{b})$ is 
zero if and only if $w^{-1}(\alpha_{0})$ is a negative root.  Therefore, by using Theorem 4.2, it follows that $\phi_{w}:P_{w}\longrightarrow A_{w}$ is an isomorphism if and only if $H^{0}(w, \mathfrak{b})$ is 
zero. Now, the proof of the corollary follows from Corollary 4.4.
\end{proof}

\begin{remark}
We recall the restriction map $r: H^0(w_{0}, \mathfrak g/\mathfrak b) \longrightarrow H^0(w, \mathfrak g/ \mathfrak b )$ which is used in the proof of Theorem 4.2.  Let $\mathfrak{q}=r^{-1}(H^{0}(X(w), T_{w})).$  The proof of the statement $\mathfrak{q}=\mathfrak{p}_{w}$ is due to the referee.
We are very  grateful to him/her for the proof.
\end{remark}

\begin{remark}
Corollary 4.3 does not hold in the non simply laced case. For instance, 
let $G$ be of type $B_{2},$ and $w=s_{2}s_{1}.$ Here, we follow the convention
with $\langle \alpha_{1} , \alpha_{2} \rangle =-2$ and $\langle \alpha_{2} , \alpha_{1} \rangle =-1.$ 
In this case, we have $\beta=\alpha_{1}+\alpha_{2}\in R^{+}\setminus (\bigcup_{v\leq w} R^{+}(v^{-1})),$ and the natural action of $U_{-\beta}$ on $X(w)$ is not trivial. 
\end{remark}

\section { Cohomology modules in the non simply laced  case}

Throughout this section, we assume that $G$ is a simple  
algebraic group of adjoint type over $\mathbb{C}$ which is not simply laced.
Since $G$ is not simply laced, there is a highest long root and there is a highest short root. We denote the highest long root  by  $\alpha_{0}$ and the highest short root  by $\beta_{0}$.    

We first prove the following.

\begin{lemma}
There is a positive root $\beta$ and a simple root $\alpha$ such 
that $s_{\alpha}\circ  \beta=\beta_{0}$. 
\end{lemma}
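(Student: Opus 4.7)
The plan is to construct the required pair $(\alpha,\beta)$ explicitly, exploiting the fact that under the dot action
\[
s_\alpha\cdot\beta \;=\; s_\alpha(\beta+\rho)-\rho \;=\; s_\alpha(\beta)-\alpha,
\]
so the equation $s_\alpha\cdot\beta=\beta_0$ is equivalent to $\beta = s_\alpha\cdot\beta_0 = \beta_0-(\langle\beta_0,\alpha\rangle+1)\alpha$. I would therefore try to identify a simple root $\alpha$ for which this prescription gives a positive root.

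First I would produce a simple root $\alpha$ with $\beta_0+\alpha\in R^+$. Since $G$ is not simply laced, $\beta_0\neq\alpha_0$, and as $\alpha_0$ is the unique maximal element of the positive root poset we have $\beta_0<\alpha_0$. By the standard fact that any two comparable positive roots can be joined by an ascending chain whose successive differences are simple roots, some simple root $\alpha$ satisfies $\beta_0+\alpha\in R^+$. Because $\beta_0$ is the highest element of its $W$-orbit it is dominant, so $k:=\langle\beta_0,\alpha\rangle\geq 0$; the $\alpha$-string through $\beta_0$ therefore has the form $\{\beta_0+j\alpha : -q\leq j\leq p\}$ with $q-p=k$ and $p\geq 1$, which forces $q\geq k+1$.

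Next, set $\beta:=\beta_0-(k+1)\alpha$. Since $k+1\leq q$, $\beta$ lies in the $\alpha$-string and is hence a root. To upgrade this to $\beta\in R^+$ I would descend along the string and observe inductively that each $\beta_0-j\alpha$ for $0\leq j\leq q$ is positive; this relies on the elementary fact that if $\gamma\in R^+$ and $\gamma-\alpha\in R$, then $\gamma-\alpha\in R^+$ (otherwise $\alpha-\gamma\in R^+$ would dominate $\alpha$ coefficient-wise, forcing $\gamma=\alpha$ and hence the non-root $\gamma-\alpha=0$). Finally, a direct calculation gives
\[
s_\alpha\cdot\beta \;=\; s_\alpha(\beta)-\alpha \;=\; \bigl(\beta_0-k\alpha+(k+1)\alpha\bigr)-\alpha \;=\; \beta_0,
\]
as required.

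The principal step is securing the existence of a simple root $\alpha$ with $\beta_0+\alpha\in R^+$; once such an $\alpha$ is in hand, both the positivity of $\beta$ and the identity $s_\alpha\cdot\beta=\beta_0$ follow from a routine piece of $\alpha$-string arithmetic together with the dot-action formula.
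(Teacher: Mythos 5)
Your proposal is correct and follows essentially the same route as the paper: choose a simple root $\alpha$ with $\beta_{0}+\alpha\in R$ (possible since $\beta_{0}<\alpha_{0}$, as $G$ is not simply laced) and take $\beta=s_{\alpha}\cdot\beta_{0}$. The only difference is cosmetic: the paper obtains positivity of $\beta$ immediately from the identity $\beta=s_{\alpha}(\beta_{0}+\alpha)$, since $s_{\alpha}$ permutes the positive roots other than $\alpha$, whereas you re-derive it by $\alpha$-string arithmetic and a descent along the string.
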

\begin{proof}

Since $\beta_{0} < \alpha_{0}$, there is a simple root 
$\alpha$ such that $\beta_{0}+\alpha$ is a positive root. 
Since $\beta_{0} +\alpha \neq  \alpha $,  it follows that  $\beta=s_{\alpha}\circ\beta_{0}=s_{\alpha}(\beta_{0}+\alpha)$ is a positive root.
\end{proof}

Let $\alpha$ and $\beta$ be as in Lemma 5.1. 
Let $w\in W$ be such that $s_{\alpha}\leq w$. 
Let $V_{1}:=\oplus_{\mu\leq \beta}\mathfrak{g}_{\mu}$ be the direct 
sum of all weight spaces of weights $\mu$ satisfying $\mu\leq\beta$. Also, 
let $V_{2}:=\oplus_{\mu < \beta}\mathfrak{g}_{\mu}$ be the direct sum of all
weight spaces of weights $\mu$ satisfying $\mu < \beta$.
We note that $V_{2}$ is a $B$- submodule of $\mathfrak{g}$ containing $\mathfrak{b}$ and $V_{1}$ is a $B$-submodule of $\mathfrak{g}$ containing $V_{2}$. 

Then we have
\begin{lemma}
$H^{1}(w , V_{1}/V_{2})$  is non-zero.  
\end{lemma}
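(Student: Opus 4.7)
The $B$-module $V_1/V_2$ is one-dimensional of weight $\beta$, since the only weight space present in $V_1$ but not in $V_2$ is the one-dimensional $\mathfrak{g}_\beta$. Thus the statement reduces to showing $H^1(w,\beta)\neq 0$. As preliminary numerical input: since $\beta_0$ is dominant and $\beta_0+\alpha$ is a root, $n:=\langle\beta_0+\alpha,\alpha\rangle=\langle\beta_0,\alpha\rangle+2\geq 2$, and $\beta=s_\alpha(\beta_0+\alpha)=\beta_0-(n-1)\alpha$ gives both $\langle\beta,\alpha\rangle=-n\leq -2$ and $s_\alpha\cdot\beta=\beta_0$.

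The heart of the argument is Lemma 2.2(3). In the main case, where $w$ admits a reduced expression ending in $s_\alpha$ (equivalently, $l(ws_\alpha)=l(w)-1$), writing $w=\tau s_\alpha$ with $l(\tau)=l(w)-1$, Lemma 2.2(3) yields
\[
H^1(w,\beta)=H^0(w,s_\alpha\cdot\beta)=H^0(w,\beta_0).
\]
Because $\beta_0$ is dominant, $H^0(w,\beta_0)$ is a nonzero Demazure module: this is verified by induction on $l(w)$ using Lemma 2.2(1) (each step $u\mapsto us_\gamma$ has $\langle\beta_0,\gamma\rangle\geq 0$), and the one-dimensional $\beta_0$-weight space is preserved throughout, so in particular $H^0(w,\beta_0)_{\beta_0}\neq 0$.

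The main obstacle is the general case: $s_\alpha\leq w$ allows $ws_\alpha>w$, in which case no reduced expression of $w$ ends in $s_\alpha$. I would handle this by picking a reduced expression $w=s_{i_1}\cdots s_{i_r}$ with $s_{i_j}=s_\alpha$ for the largest such $j$, applying the previous paragraph to the subword $w':=s_{i_1}\cdots s_{i_j}$ to obtain $H^1(w',\beta)\neq 0$, and then transferring the nonvanishing to $H^1(w,\beta)$ by iteratively invoking Lemma 2.2(1) for the remaining factors $s_{i_{j+1}},\ldots,s_{i_r}$. This iteration is legitimate because, by the maximality of $j$, every $\alpha_{i_m}$ with $m>j$ is different from $\alpha$, and the inequalities $\langle\beta_0,\gamma\rangle\geq 0$ and $\langle\alpha,\gamma\rangle\leq 0$ (for any simple root $\gamma\neq\alpha$) combine to give $\langle\beta,\alpha_{i_m}\rangle\geq 0$, the hypothesis of Lemma 2.2(1); the resulting long exact sequences then carry the nonzero $\beta_0$-weight class from $H^1(w',\beta)$ through each step up to $H^1(w,\beta)$.
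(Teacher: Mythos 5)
Your reduction to $H^{1}(w,\beta)\neq 0$, the computations $\langle \beta,\alpha\rangle\leq -2$ and $s_{\alpha}\cdot\beta=\beta_{0}$, and the main case $l(ws_{\alpha})=l(w)-1$ (where Lemma 2.2(3) gives $H^{1}(w,\beta)=H^{0}(w,\beta_{0})\neq 0$) are all fine. The gap is in the transfer step needed for a general $w$ with $s_{\alpha}\leq w$. Lemma 2.2(1) applied to $u'=us_{\gamma}$ with $l(u')=l(u)+1$ and $\langle\beta,\gamma\rangle\geq 0$ gives $H^{1}(u',\beta)\cong H^{1}(u,H^{0}(s_{\gamma},\beta))$; it does not relate this module directly to $H^{1}(u,\beta)$. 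The only comparison available is the long exact sequence attached to $0\to K\to H^{0}(s_{\gamma},\beta)\to\mathbb{C}_{\beta}\to 0$ (Lemma 2.1), which reads
$$H^{1}(u,K)\longrightarrow H^{1}(u,H^{0}(s_{\gamma},\beta))\longrightarrow H^{1}(u,\beta)\longrightarrow H^{2}(u,K),$$
so your nonzero class in $H^{1}(u,\beta)_{\beta_{0}}$ lifts to $H^{1}(u',\beta)$ only if its image under the connecting map into $H^{2}(u,K)_{\beta_{0}}$ vanishes. Nothing you have said controls that map: the weights of $K$ are the roots $\beta-t\gamma$, $1\leq t\leq\langle\beta,\gamma\rangle$, and a vanishing statement such as $H^{i}(u,\mu)=0$ for $i\geq 2$ for such weights is only established much later in the paper (Lemma 4.8, Corollary 4.9) by a substantial argument. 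So the sentence ``the resulting long exact sequences then carry the nonzero $\beta_{0}$-weight class'' is exactly the unproved point, and it is where all the difficulty lies when no reduced expression of $w$ ends in $s_{\alpha}$.

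The paper's own proof avoids this induction entirely: since $\beta_{0}=s_{\alpha}\cdot\beta$ is dominant, Borel--Weil--Bott makes $H^{1}(w_{0},\beta)$ an irreducible $G$-module with highest weight $\beta_{0}$, Lemma 2.2 gives $H^{1}(s_{\alpha},\beta)\neq 0$, and the surjectivity of the restriction map $H^{1}(w_{0},\beta)\to H^{1}(s_{\alpha},\beta)$ from [12, Corollary 4.3] factors through $H^{1}(w,\beta)$ because $X(s_{\alpha})\subset X(w)\subset G/B$, forcing $H^{1}(w,\beta)\neq 0$ at once. To repair your argument you must either invoke such a surjectivity-of-restriction result, or prove at each step that the class survives, e.g.\ by showing $H^{2}(u,K)_{\beta_{0}}=0$ or that $H^{1}(u,H^{0}(s_{\gamma},\beta))\to H^{1}(u,\beta)$ is surjective; neither follows from Lemmas 2.1--2.6 alone.
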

\begin{proof}
Note that  $\beta_{0}=s_{\alpha}\circ \beta$  
is a dominant character of $T$.  
Hence, by the Borel-Weil-Bott's theorem [3], 
$H^{1}(w_{0}, \beta)$ is an irreducible representation of $G$
with highest weight $s_{\alpha}\circ \beta$. Also, by Lemma 2.2, $H^{1}(s_{\alpha}, \beta)$ is non-zero. Also, by [12, Corollary 4.3], the restriction map $H^{1}(w_{0}, \beta) \longrightarrow 
H^{1}(s_{\alpha}, \beta)$ is surjective. 

Thus, the restriction map $H^{1}(w,  \beta)
\longrightarrow H^{1}(s_{\alpha}, \beta)$ is also 
surjective. Hence, $H^{1}(w,  \beta)$ is non-zero.
Since the quotient $B$-module $V_{1}/V_{2}$ is isomorphic to $\mathbb{C}_{\beta}$, we have 
$H^{1}(w,  V_{1}/V_{2})=H^{1}(w,  \beta)$. Hence 
$H^{1}(w,  V_{1}/V_{2})$ is non-zero.
This completes the proof.

\end{proof}

Let $V(\beta_{0})$ be the irreducible $G$-module of highest 
weight $\beta_{0}$.

{\it Notation:}
\begin{enumerate}
\item
Let $\gamma$ be a  short simple root. Since $V(\beta_{0})_{\gamma}$ is non zero, there is a $v\in V(\beta_{0})_{0}$ such that $x_{-\gamma}\cdot v$ is non zero and $x_{-\nu}\cdot v$ is zero for every 
simple root $\nu$ different from $\gamma$. The cyclic $B_{\gamma}$-submodule of $V(\beta_{0})$ generated by $v$  is a two dimensional indecomposable $B_{\gamma}$-submodule 
of type similar to that of type (2) in the statement of Lemma 3.3.  We denote it by $V(\beta_{0})_{0, -\gamma}$. 
\item
Let $\gamma$ be a simple root not necessarily a short root. If $\beta$ is a short root such that $\langle \beta , \gamma \rangle =1$,
then the cyclic $B_{\gamma}$-submodule of $V(\beta_{0})$ generated by the one dimensional subspace $V(\beta_{0})_{\beta}$ is a two dimensional  indecomposable $B_{\gamma}$-submodule of type similar to that of type (4) in the statement of 
Lemma 3.3. We denote it by $V(\beta_{0})_{\beta , \beta - \gamma}$. 
\end{enumerate}

Let $V$ be a $B$-submodule of $V(\beta_{0})$ such 
that for every short simple root $\alpha$, there is a 
decomposition of the $B_{\alpha}$-module $V$ such that every indecomposable  
$B_{\alpha}$-summand $V^{\prime}$ of $V$ is one of the following:  

\begin{myenumerate}
\item
$V^{\prime}=\mathbb{C} v$ for some non zero vector $v\in V(\beta_{0})_{0}$ such that 
$x_{-\alpha}\cdot v=0.$ 
\item  
$V^{\prime}=V(\beta_{0})_{0, -\alpha}$.
\item
$V^{\prime}=V(\beta_{0})_{\beta}$ for some short root $\beta$ 
such that  $\langle \beta , \alpha \rangle \in \{-1, 0\}$. 
\item  $V^{\prime}= V(\beta_{0})_{\beta , \beta-\alpha}$ 
for some short root $\beta$ such $\langle \beta , \alpha \rangle =1.$ 
\item
$V^{\prime}=sl_{2,\alpha}$, the three dimensional irreducible 
$L_{\alpha}$-module with highest weight $\alpha$.
\end{myenumerate}

The following lemma is useful to prove the main results in section 5 and section 6.

 \begin{lemma}
Let $\phi\in W$ and $\gamma\in S$. Then we have
\begin{myenumerate}
\item
If  $\gamma$ is a short root, then there is a decomposition of the 
$B_{\gamma}$-module $H^{0}(\phi, V)$ such that every indecomposable 
$B_{\gamma}$-summand of $H^{0}(\phi, V)$ is one of the five types as in the 
hypothesis above.
\item If $\gamma$ is a long root, then there is a decomposition of the $B_{\gamma}$-module $H^{0}(\phi, V)$ such that 
every indecomposable $B_{\gamma}$-summand of $H^{0}(\phi, V)$ is one of the types (1), (3) or (4) as in the hypothesis above.
\end{myenumerate}
\end{lemma}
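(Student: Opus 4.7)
The plan is to mimic the proof of Lemma 3.3, replacing $\mathfrak{g}$ by $V(\beta_{0})$ and distinguishing short and long $\gamma$. The weights of $V(\beta_{0})$ are short roots together with (possibly) $0$; this makes things easier for long $\gamma$, because neither $\pm\gamma$ nor the highest weight $\gamma$ of an $sl_{2,\gamma}$-submodule occurs in $V(\beta_{0})$, so types (2) and (5) are automatically absent. More precisely, I would first establish a classification analogous to Lemma 3.2: every indecomposable $B_{\gamma}$-summand of $V(\beta_{0})$ is of one of the types (1)--(5) when $\gamma$ is short, and of type (1), (3) or (4) when $\gamma$ is long. The key ingredient is that for long $\gamma$ and any weight $\mu$ of $V(\beta_{0})$ one has $\langle \mu, \gamma \rangle \in \{-1, 0, 1\}$, which bounds the $\gamma$-string through $\mu$.

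Next I would induct on $l(\phi)$. The base case $\phi=1$ gives $H^{0}(\phi, V) = V$; for short $\gamma$ the decomposition is exactly the one assumed in the hypothesis on $V$, while for long $\gamma$ the classification above, applied to $V$ as a $B_{\gamma}$-submodule of $V(\beta_{0})$, yields the result. For the inductive step, choose a simple root $\alpha$ with $l(s_{\alpha}\phi) = l(\phi)-1$ and use \emph{SES} to identify $H^{0}(\phi, V) = H^{0}(s_{\alpha}, H^{0}(s_{\alpha}\phi, V))$. By induction, $H^{0}(s_{\alpha}\phi, V)$ admits, for every simple root $\nu$, a $B_{\nu}$-decomposition into summands of the prescribed types. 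One then divides into the same three cases as in Lemma 3.3: (i) $\gamma = \alpha$, handled by applying Lemmas 2.3 and 2.5 to each indecomposable $B_{\alpha}$-summand of $H^{0}(s_{\alpha}\phi, V)$; (ii) $\gamma \neq \alpha$ with $\langle \gamma, \alpha \rangle \neq 0$, handled by a subcase analysis according to whether the $B_{\alpha}$-summand of $H^{0}(s_{\alpha}\phi, V)$ containing $V(\beta_{0})_{-\gamma}$ is of type (2)/(4) or of type (5); and (iii) $\langle \gamma, \alpha \rangle = 0$, handled by the observation that $\alpha$ and $\gamma$ then lie in orthogonal root subsystems, so each $B_{\gamma}$-summand of $H^{0}(s_{\alpha}\phi, V)$ survives under $H^{0}(s_{\alpha}, -)$ unchanged (the associated bundle on $L_{\alpha}/B_{\alpha}$ being trivial).

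The main obstacle will be Case (ii), because $\langle \gamma, \alpha \rangle$ and $\langle \alpha, \gamma \rangle$ can now differ and either of them can be as large as $\pm 2$ (or $\pm 3$ in type $G_{2}$) rather than $\pm 1$ as in the simply laced setting. This forces a careful subcase analysis along the four possibilities short/short, short/long, long/short and long/long for the pair $(\alpha, \gamma)$, together with explicit Chevalley-bracket computations to pin down which of the five (resp.\ three) types actually occurs. In particular, when $\gamma$ is short, producing a $V(\beta_{0})_{0, -\gamma}$-summand of $H^{0}(\phi, V)$ requires repeating the auxiliary construction of a Cartan element $h = h_{1} - h_{2}$ as in the proof of Lemma 3.3, with $h_{2} = \sum_{\nu \in S_{2}} \nu(h_{1}) h(\nu)$ subtracting off contributions from simple roots $\nu \in S \setminus S_{1}$ on which $h_{1}$ acts nontrivially; the argument adapts verbatim because it is driven only by weight bookkeeping inside the zero-weight space. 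Part (2), for long $\gamma$, should in fact be the easier half, since no $sl_{2,\gamma}$- or $V(\beta_{0})_{0, -\gamma}$-summand ever appears and one only needs to track types (1), (3) and (4).
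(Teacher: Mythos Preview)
Your proposal is correct and follows the same approach as the paper, whose proof is extremely terse: for part (2) it records only that $V(\beta_{0})_{-\gamma}=0$ when $\gamma$ is long and says ``similar arguments as in Lemma~3.3'', and for part (1) it says ``essentially the same as Lemma~3.3''. Your outline is a faithful expansion of exactly this.

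One remark: your concern about Case~(ii) is somewhat overstated. The decisive point is that every nonzero weight $\mu$ of $V(\beta_{0})$ is a \emph{short} root, so for any simple root $\alpha$ one has $\langle \mu,\alpha\rangle\in\{-1,0,1\}$ whenever $\mu\neq\pm\alpha$ (if $\alpha$ is long this holds for all $\mu$; if $\alpha$ is short it follows because two non-proportional short roots have the same length). Thus the $B_{\alpha}$-strings occurring in $H^{0}(s_{\alpha}\phi,V)$ behave exactly as in the simply laced case, and the subcase analysis of Lemma~3.3 carries over without new phenomena from the possibly large values of $\langle\alpha,\gamma\rangle$. The extra length ratios affect only pairings in the direction you do not need.
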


\begin{proof}
{ \it Proof of (2):}  If $\gamma$ is a long simple root, then $V(\beta_{0})_{-\gamma}=0$.  The proof  of (2) follows by  using similar arguments in the proof of  Lemma 3.3. 

The proof  of (1)  is essentially the same as that of Lemma 3.3. 
\end{proof}

\begin{corollary}
Let $\phi\in W$.  
\begin{enumerate}
\item
Let $V$ be a $B$-submodule of $V(\beta_{0})$ satisfying the 
hypothesis of Lemma 5.3. Then we have 
$H^{i}(\phi, V)=0$ for every $i\geq 1$.
\item
Let $V$ be a $B$-submodule of $V(\beta_{0})$ such that $V_{\mu}=0$
unless $\mu\in -(R^{+}\setminus S)$. Then we have 
$H^{i}(\phi, V)=0$ for every $i\geq 1$.
\end{enumerate}
\end{corollary}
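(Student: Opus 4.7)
The plan is to prove (1) by induction on $l(\phi)$, mirroring the proof of Lemma 3.4 in the simply laced case. The base case $\phi=1$ is immediate since $H^0(1,V)=V$ and all higher cohomology vanishes. For the inductive step, pick a simple root $\gamma$ with $l(s_\gamma\phi)=l(\phi)-1$ and apply Lemma 4.3 to the pair $(s_\gamma\phi,\gamma)$ (part (1) if $\gamma$ is short, part (2) if $\gamma$ is long): this produces a decomposition of the $B_\gamma$-module $H^0(s_\gamma\phi,V)$ into indecomposable summands of the listed types. By Lemma 2.4 each such summand $V'$ is of the form $V''\otimes\mathbb{C}_{a\omega_\gamma}$ with $V''$ an irreducible $\hat{L_\gamma}$-module and $a\in\{-1,0,1\}$; this is a type-by-type check (e.g.\ type (4) gives $a=\langle\beta-\omega_\gamma,\gamma\rangle=0$ and type (2) gives $a=-1$). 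Lemma 2.3 then forces $H^j(L_\gamma/B_\gamma,V')=0$ for every $j\geq 1$, and summing over the summands yields $H^j(P_\gamma/B,H^0(s_\gamma\phi,V))=0$ for $j\geq 1$. Feeding this into \textit{SES}
\[0\to H^1(s_\gamma,H^{i-1}(s_\gamma\phi,V))\to H^i(\phi,V)\to H^0(s_\gamma,H^i(s_\gamma\phi,V))\to 0,\]
the case $i=1$ is handled because the first term vanishes by the argument above and the third by the inductive hypothesis $H^1(s_\gamma\phi,V)=0$, while for $i\geq 2$ both outer terms vanish by induction.

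For part (2), the strategy is to deduce the result from (1) by verifying that $V$ itself satisfies the hypothesis of Lemma 4.3. Fix a short simple root $\alpha$; Krull--Schmidt (valid for finite-dimensional $B_\alpha$-modules over $\mathbb{C}$) yields a decomposition of $V$ into indecomposable $B_\alpha$-summands, each an indecomposable $B_\alpha$-submodule of $V(\beta_0)$ whose $T$-weights all lie in $-(R^+\setminus S)$. Because every $\alpha$-string in $V(\beta_0)$ has length at most three, each such summand $V'$ has dimension $1$, $2$, or $3$: the 3-dimensional case $V'=sl_{2,\alpha}$ has weights $\{\alpha,0,-\alpha\}$ and is excluded by the weight hypothesis, as are types (1) and (2) (whose weight supports contain $0$ or $-\alpha$). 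A 1-dimensional summand is spanned by a weight vector $v_\beta$ with $\beta\in-(R^+\setminus S)$, and $B_\alpha$-invariance forces $x_{-\alpha}v_\beta=0$, i.e.\ $\langle\beta,\alpha\rangle\leq 0$; the value $-2$ would give $\beta=-\alpha\notin-(R^+\setminus S)$, leaving $\langle\beta,\alpha\rangle\in\{-1,0\}$ and placing $V'$ in type (3). A 2-dimensional summand has weights $\beta,\beta-\alpha$ both in $-(R^+\setminus S)$, and the length-two $\alpha$-string structure forces $\langle\beta,\alpha\rangle=1$, so $V'$ is of type (4). Thus $V$ satisfies the hypothesis of Lemma 4.3, and part (1) delivers the desired vanishing.

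The main obstacle is the classification step in (2) --- in particular, ruling out the borderline weight $-\alpha$ from the 1-dimensional case and the $sl_{2,\alpha}$ possibility from the 3-dimensional case using only the weight condition $V_\mu=0$ for $\mu\notin-(R^+\setminus S)$ --- together with the type-by-type check in (1) that each twist character pairs with $\gamma$ in $\{-1,0,1\}$ so that Lemma 2.3 applies.
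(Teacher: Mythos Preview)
Your proof of part (1) matches the paper's approach closely: both induct on $l(\phi)$, invoke Lemma 4.3 to decompose $H^0(s_\gamma\phi,V)$ as a $B_\gamma$-module, read off that each summand has the form $V''\otimes\mathbb{C}_{a\omega_\gamma}$ with $a\geq -1$, and conclude via Lemma 2.3 and \textit{SES}. (The paper in fact gets $a\in\{-1,0\}$ rather than your $\{-1,0,1\}$, but either suffices.)

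For part (2) you take a genuinely different route. The paper does \emph{not} reduce to (1); instead it runs the same induction directly, observing that $H^0(s_\alpha\phi,V)$ is a $B$-submodule of $V$ by Lemma 2.6 and hence again has all weights in $-(R^+\setminus S)$. Since every such weight is a short root, its pairing with \emph{any} simple root $\alpha$ lies in $\{-1,0,1\}$, which forces the twist $a\in\{-1,0\}$ for every indecomposable $B_\alpha$-summand, and Lemma 2.3 finishes. This argument is shorter and treats long and short $\alpha$ uniformly, avoiding the case-by-case classification of summands. Your reduction, by contrast, works only because the weight constraint rules out types (1), (2), (5) and constrains types (3), (4); this is correct but your justification for the two-dimensional case (``the length-two $\alpha$-string structure forces $\langle\beta,\alpha\rangle=1$'') is terse. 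The point is not the $B_\alpha$-module structure alone but rather that both $\beta$ and $\beta-\alpha$ are short roots distinct from $\pm\alpha$, which forces $\langle\beta,\alpha\rangle,\langle\beta-\alpha,\alpha\rangle\in\{-1,0,1\}$, and together with $\langle\beta,\alpha\rangle-\langle\beta-\alpha,\alpha\rangle=2$ this pins down $\langle\beta,\alpha\rangle=1$. With that clarification your argument is complete.
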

\begin{proof}
The proof is by induction on $l(\phi)$.
If $l(\phi)=0$, then  $\phi=1$ and so we are done.
Otherwise, choose a simple root $\alpha$ such that 
$l(\phi)=1+l(s_{\alpha}\phi)$.
By induction, we have $H^{i}(s_{\alpha}\phi, V)=0$ for every $i\geq 1.$ 
Applying this to {\it SES}, we see that 
$H^{i}(\phi, V)=0$ for every $i\geq 2$ and  
$H^{1}(\phi, V)=H^{1}(s_{\alpha}, H^{0}(s_{\alpha}\phi, V)).$ 
Therefore, it is sufficient to show that $H^{1}(s_{\alpha}, H^{0}(s_{\alpha}\phi, V))=0.$ 

{\it Proof of 1:}  If $\alpha$ is a long root, then  by Lemma 5.3(2), every indecomposable 
$B_{\alpha}$-summand $V^{\prime}$ of $H^{0}(s_{\alpha}\phi, V)$ is one of 
the types (1), (3) or (4) as in the hypothesis of Lemma 5.3. In all these cases, 
in view of Lemma 2.4, we see that there is an irreducible $\hat{L_{\alpha}}$-module 
$V^{\prime \prime}$ and an integer $a\in \{-1,0\}$ such that   
$V^{\prime}=V^{\prime \prime}\otimes \mathbb{C}_{a\omega_{\alpha}}.$ Hence, by using Lemma 2.3,
we see that $H^{i}(s_{\alpha}, V^{\prime})=0$ for every $i\geq 1.$
Thus, we have $H^{1}(s_{\alpha}, H^{0}(s_{\alpha}\phi, V))=0.$

If $\alpha$ is a short root, using Lemma 5.3(1), we see that 
every indecomposable $B_{\alpha}$-summand $V^{\prime}$ of 
$H^{0}(s_{\alpha}\phi, V)$ is one of the five types as in the hypothesis 
of Lemma 5.3. Therefore, in view of Lemma 2.4, we see that every 
indecomposable $B_{\alpha}$-summand $V^{\prime}$ of 
$H^{0}(s_{\alpha}\phi, V)$ is of the form 
$V^{\prime}=V^{\prime \prime}\otimes \mathbb{C}_{a\omega_{\alpha}}$ for some irreducible 
$L_{\alpha}$-module $V^{\prime \prime}$ and an integer $a\in \{-1,0\}.$
We apply Lemma 2.3 to conclude that  $H^{1}(s_{\alpha}, H^{0}(s_{\alpha}\phi, V))=0$.

This completes the proof of (1).

{\it Proof of 2:}  By Lemma 2.6, $H^{0}(s_{\alpha}\phi , V)$ is a $B$-submodule of $V$ and therefore 
it satisfies the hypothesis of (2).  Let  $V^{\prime}$ be an indecomposable $B_{\alpha}$-summand  of  $H^{0}(s_{\alpha}\phi, V)$. In view of Lemma 2.4, we have  $V^{\prime}=V^{\prime \prime}\otimes \mathbb{C}_{a\omega_{\alpha}}$ for some irreducible $\hat{L_{\alpha}}$-module $V^{\prime \prime}$ and an integer $a$.  
In particular, the set of all weights of $V^{\prime}$ is equal to $\{\lambda+a\omega_{\alpha}, \lambda-\alpha+a\omega_{\alpha}, \cdots , s_{\alpha}(\lambda)+a\omega_{\alpha} \}$,  where $\lambda$ is the 
highest weight of the irreducible $L_{\alpha}$-module $V^{\prime \prime}$.  
Note that by the hypothesis, we have $$\lambda -i\alpha+a\omega_{\alpha}\in -(R^{+}\setminus S),$$  
for every $0 \leq i \leq \langle \lambda , \alpha \rangle $. 
Further,  $\lambda -i\alpha+a\omega_{\alpha}$ is a short root for every $0 \leq i \leq \langle \lambda , \alpha \rangle $. Therefore, we have  $$-1\leq \langle s_{\alpha}(\lambda) + a\omega_{\alpha}, \alpha \rangle \leq  a \leq \langle \lambda + a\omega_{\alpha}, \alpha \rangle \leq  1. $$ 
Thus, we have $a\in \{-1,0, 1 \}$. Hence, by using Lemma 2.3,
we see that $H^{i}(s_{\alpha}, V^{\prime})=0$ for every $i\geq 1.$
Thus, we have $H^{1}(s_{\alpha}, H^{0}(s_{\alpha}\phi, V))=0.$

This completes the proof of (2).

\end{proof}

\begin{corollary}
Let $\phi\in W$.  Let $V_{1}$ be a $B$-submodule of $V(\beta_{0})$ and $V_{2}$ be a 
$B$-submodule of  $V_{1}$ .  Assume that both $V_{1}$ and $V_{2}$ satisfy either the 
hypothesis of Corollary 5.4(1)  or the hypothesis of Corollary 5.4(2).
Then we have 
$H^{i}(\phi, V_{1}/V_{2})=0$ for every $i\geq 1$.
\end{corollary}
\begin{proof}

By Corollary 5.4, we see that  
$H^{i}(\phi, V_{j})=0$ for every $i\geq 1$ and $j=1,2.$ 
Now, the proof follows by using the arguments similar to the proof of Lemma 3.5(1).
\end{proof}

\begin{corollary}
Let $\phi\in W$ and $\alpha$ be a short root such that $-\alpha\notin S$. \\
Then we have 
$H^{i}(\phi, \alpha)=0$ for every $i\geq 1$.
\end{corollary}
\begin{proof}
Take $V_{1}:=\oplus_{\mu\leq \alpha}V(\beta_{0})_{\mu}$ and 
$V_{2}:=\oplus_{\mu < \alpha}V(\beta_{0})_{\mu}$.  If $\alpha\in R^{+}$, 
then $\oplus_{\mu\leq 0}V(\beta_{0})_{\mu} $ is a  subspace of $V_{2}$. 
Therefore, imitating the proof of Lemma 3.3, we see that both the $B$-modules $V_{1}$ and $V_{2}$ satisfy the hypothesis of Lemma 5.3 ( the only difference here is that we have to work with 
the zero wight space of $V(\beta_{0})$ instead of $\mathfrak{h}$ and the set of all short simple roots instead of the set of all simple roots ). Therefore, both $V_{1}$ and $V_{2}$ satisfy the hypothesis of Corollary 5.4(1).

Otherwise, both the $B$-modules $V_{1}$ and $V_{2}$ satisfy the hypothesis of 
Corollary 5.4(2). In both cases, the proof of the corollary follows by using Corollary 5.5 and 
imitating the proof of Corollary 3.6.
\end{proof}

\section{Main results in the non simply laced case}
In this section, we  prove the main results in the non simply laced case.
Through out this section, we assume that $G$ is not simply laced.

The following lemma is useful to prove the main results in this section. 

\begin{lemma} 
 Let $V$ be a $B$-module. Let $w\in W$ and let $\gamma$ be a simple root
such that $l(ws_{\gamma})=l(w)-1$. Let $\phi=ws_{\gamma}$.  Then
we have the following long exact sequence of $B$-modules:
\begin{align*}
&H^{1}(\phi , H^{0}(s_{\gamma}, V))\longrightarrow 
 H^{1}(w , V)\longrightarrow H^{0}(\phi, H^{1}(s_{\gamma}, V)) \longrightarrow  \cdots
 \\ 
& \cdots 
\longrightarrow H^{i}(\phi, H^{0}(s_{\gamma}, V)) \longrightarrow 
H^{i}(w, V) \longrightarrow 
H^{i-1}(\phi, H^{1}(s_{\gamma} , V)) \longrightarrow  \cdots
\end{align*}
\end{lemma}
\begin{proof}

Fix a decomposition of $V=\oplus_{j=1}^{r}V_{j}$ into a direct sum of indecomposable 
$B_{\gamma}$-modules. By using Lemma 2.4, for each $1\leq j \leq r$, there exists an irreducible 
$\hat{L_{\gamma}}$-module $V_{j}^{\prime}$ and an integer $m_{j}$  such that  $V_{j}=V_{j}^{\prime}\otimes \mathbb{C}_{m_{j}\omega_{\gamma}}$. Now, let $J_{1}:=\{j\in \{1, 2, \cdots, r \}: m_{j}\geq 0\}$
 and  $J_{2}:=\{1, 2, \cdots, r\} \setminus J_{1}$. By using Lemma 2.3, we see that the 
image $V_{+}$ of the evaluation map $H^{0}(s_{\gamma} , V)\longrightarrow V$ 
is equal to $\oplus_{j\in J_{1}}V_{j}$.

 Let $V_{-}:=\oplus_{j\in J_{2}}V_{j}$. Clearly, $V_{+}$ is a $B$-submodule of $V$ and $V_{-}$ is a $B_{\gamma}$-submodule of $V$. Note that  the restriction to $V_{-}$ of the natural map 
$V\longrightarrow V/V_{+}$ is an isomorphism of $B_{\gamma}$-modules.

Further, we have $H^{i}(s_{\gamma} , V_{+})=0$ for every 
$i\geq 1$ and $H^{i}(s_{\gamma} , V/V_{+})=0$ for every $i\neq 1$ ( see Lemma 2.3 ).
Also, the inclusion $V_{+}\subset V$ of $B$-modules induces an isomorphism 
$H^{0}(s_{\gamma} , V_{+})\longrightarrow H^{0}(s_{\gamma}, V)$ of $B$-modules.
Similarly, the natural map $V\longrightarrow V/V_{+}$ induces an isomorphism 
$H^{1}(s_{\gamma} , V)\longrightarrow H^{1}(s_{\gamma}, V/V_{+})$
of $B$-modules.

Let $w=s_{i_{1}}s_{i_{2}}\cdots s_{i_{r}}$ be a reduced  expression for $w$ with $s_{i_{r}}=s_{\gamma}$. Let $\underline i=(i_{1}, i_{2}, \cdots, i_{r})$ and $\underline i'=(i_{1}, i_{2}, \cdots, i_{r-1})$

Recall the morphism $f_r : Z(w, \underline i) \longrightarrow Z(\phi,
\underline i')$ from section 2. Using (Iso), we see that 
\[ R^{j}{f_{r}}_{*}{\mathcal L}(w, \underline i, V) = {\mathcal L}({\phi},  \underline i' , 
H^{j}(P_{\alpha_{i_r}}/B, {\mathcal L}(s_{i_r}, V))) \hspace{1.5cm}( j\geq 0). \] 

Using this isomorphism and the fact that  $H^{j}(s_{\gamma}, V_{+})=0$ for $j\geq 1$, it follows that  
the $B$-modules $H^{i}(\phi, H^{0}(s_{\gamma}, V_{+}))$ and $H^{i}(w, V_{+})$ are isomorphic 
for every $i\geq 0$. Similarly, the $B$-modules 
$H^{i-1}(\phi, H^{1}(s_{\gamma}, V/V_{+}))$ and 
$H^{i}(w, V/V_{+})$ are isomorphic for every $i\geq 1$. 
Summarising these observations, we see that the $B$-modules 
$H^{i}(\phi, H^{0}(s_{\gamma}, V))$ and $H^{i}(w, V_{+})$ are isomorphic for 
every $i\geq 0$. Similarly, we also see that 
$H^{i-1}(\phi, H^{1}(s_{\gamma}, V))$ and $H^{i}(w, V/V_{+})$ are 
isomorphic for every $i\geq 1$. 

Applying $H^{0}(w, -)$ to the short exact sequence 
$0 \longrightarrow V_{+} \longrightarrow V \longrightarrow V/V_{+} \longrightarrow 0$ of $B$-modules, we obtain the following long exact sequence of 
$B$-modules:
$$\cdots H^{i-1}(w, V/V_{+}) \longrightarrow H^{i}(w, V_{+}) \longrightarrow 
H^{i}(w, V) \longrightarrow H^{i}(w, V/V_{+})\longrightarrow 
H^{i+1}(w, V_{+}) \cdots$$

By using the above isomorphisms in this long exact sequence,  we obtain the following long
exact sequence of $B$-modules: 
\begin{align*}
&H^{1}(\phi , H^{0}(s_{\gamma}, V))\longrightarrow 
 H^{1}(w , V)\longrightarrow H^{0}(\phi, H^{1}(s_{\gamma}, V)) \longrightarrow  \cdots
 \\ 
& \cdots 
\longrightarrow H^{i}(\phi, H^{0}(s_{\gamma}, V)) \longrightarrow 
H^{i}(w, V) \longrightarrow 
H^{i-1}(\phi, H^{1}(s_{\gamma} , V)) \longrightarrow  \cdots
\end{align*}
This completes the proof of the lemma.

\end{proof}

We now deduce the following.
\begin{lemma}
Let $w\in W$  and  $V$ be a $B$-submodule of $\mathfrak{g}$.
Then we have $H^{i}(w, V)=0$ for every $i\geq 2$.
\end{lemma}
\begin{proof}
The proof is by induction on $l(w)$.
If $l(w)=0$, we are done. Otherwise, choose a simple root $\gamma\in S$ 
such that $l(ws_{\gamma})=l(w)-1$.

{\it Step1.} We show that  $H^{1}(s_{\gamma}, V)$ is either zero  or is a 
direct sum of the trivial $B$-module $\mathbb{C}_{0}$  with multiplicity at most 
one and a $B$-module $V^{\prime}$  having a composition series $V^{\prime}=V_{m} \supset  V_{m-1} \supset \cdots V_{1} \supset V_{0}$ such that  each subquotient  $V_{i+1}/ V_{i}$ ( $0\leq i \leq m-1$ )
is isomorphic to $\mathbb{C}_{\beta_{i}}$ for some short root $\beta_{i}$ which is not in $-S$. 

Assume that $H^1(s_{\gamma},  V)_{\mu}\neq 0$. Then there exists an
   indecomposable $B_{\gamma}$-direct summand $V_1$ of $V$ such that 
   $H^1(s_{\gamma}, V_1)_{\mu}\neq 0$. By Lemma 2.4, $V_1=V'\otimes \mathbb C_{a\omega_{\gamma}}$ for some irreducible $\hat{L_{\gamma}}$-module $V'$ and an integer $a$.
  Since $H^1(s_{\gamma}, V_1)\neq 0$, by Lemma 2.3, we have $a\leq -2$ and
  $H^1(s_{\gamma}, V_1)=V'\otimes H^0(s_{\gamma}, a\omega_{\gamma}-(a+1)\gamma)$. Therefore, 
  any weight $\mu''$ of $H^1(s_{\gamma}, V_1)$ is in the $\gamma$-string from $\mu_1+\gamma=
  \mu_1+\rho-s_{\gamma}(\rho)=s_{\gamma}(s_{\gamma}\circ \mu_1)$ to 
  $s_{\gamma}\circ \mu_1$, where $\mu_1$ is the lowest weight of $V_1$. Thus,  there is an integer  $1\leq t \leq  -(\langle \mu_{1} , \gamma \rangle +1)$ such that  $\mu=\mu_{1}+t\gamma$.
  
We now analyse the cases. 

If $\mu=0$,  then  $\mu_{1}=-\gamma$,  and the  indecomposable $B_{\gamma}$-summand  $V_{1}$ is  equal to $\mathfrak{g}_{-\gamma}$.  Further, by Lemma 2.2(3),  it follows that the trivial $B$-module $\mathbb{C}_{0}$ is a $B$-summand of  $H^{1}(s_{\gamma} , V)$.  Also,  the zero weight space $H^{1}(s_{\gamma}, V)_{0}$ of $H^{1}(s_{\gamma}, V)$ has multiplicity one in this case.

Otherwise,  by using the above discussion, we  see that $\mu_{1}$ is a root different from $-\gamma$
such that $\langle \mu_{1} , \gamma \rangle \in \{-2, -3\}$, and $V_{1}=\mathfrak{g}_{\mu_{1}}$.  In particular, $\mu$ is a short root.  Further, if  $\mu\in -S$, there is an integer $1\leq t \leq  -(\langle \mu_{1} , \gamma \rangle +1)$ such that  $\mu=\mu_{1}+t\gamma\in -S$. If $\langle \mu_{1} , \gamma \rangle =-2$, then  $t=1$ and so the simple roots $-\mu$ and $\gamma$ are orthogonal with 
$\mu_{1}=\mu -\gamma$ is a root, contradiction to the fact that sum of two orthogonal simple 
roots is not a root.  If  $\langle \mu_{1} , \gamma \rangle =-3$, then $t=1, 2$. The proof  of the case 
when $t=1$ follows by similar arguments as above. If  $t=2$,  then  $-\mu$  is a simple root 
such that $\langle -\mu , \gamma \rangle =-1$. This is a contradiction to the assumption  that 
$-\mu+2\gamma=-\mu_{1}\in R$ ( For instance, since both $-\mu$ and $\gamma$ are simple 
roots, $s_{\gamma}(-\mu_{1})=-\mu-\gamma\notin R$ ).  

This completes the proof of {\it Step 1}.

Now, let $\phi=ws_{\gamma}$. By Corollary 5.6, we see that 
$H^{i}(\phi , H^{1}(s_{\gamma}, V))=0$ for every $i\geq 1$.
On the other hand, using  Lemma 2.6, we see that $H^{0}(s_{\gamma} , V)$ is a 
$B$-submodule of $V$. Hence, by using induction on $l(w)$, we see that 
$H^{i}(\phi, H^{0}(s_{\gamma}, V))=0$ for every $i\geq 2$. 

Now, the proof of the lemma follows by applying the above observations in the 
following long exact sequence of $B$-modules ( see Lemma 6.1 ):
\begin{align*}
&H^{1}(\phi , H^{0}(s_{\gamma}, V))\longrightarrow 
 H^{1}(w , V)\longrightarrow H^{0}(\phi, H^{1}(s_{\gamma}, V)) \longrightarrow  \cdots
 \\ 
& \cdots 
\longrightarrow H^{i}(\phi, H^{0}(s_{\gamma}, V)) \longrightarrow 
H^{i}(w, V) \longrightarrow 
H^{i-1}(\phi, H^{1}(s_{\gamma} , V)) \longrightarrow  \cdots
\end{align*}

\end{proof}

The following is a consequence of Lemma 6.2.

\begin{lemma}
Let $w\in W$. Let $V_{2}$ be a $B$-submodule of $\mathfrak{g}$ and 
$V_{1}$ be a $B$-submodule of $\mathfrak{g}$  containing $V_{2}$.  Then we have 
$H^{i}(w, V_{1}/V_{2})=0$ for all $i\geq 2$. 
\end{lemma}
\begin{proof} The proof is similar to that of Lemma 3.5(1).  We use Lemma 6.2 instead of Lemma 3.4.
\end{proof}
\begin{corollary}
Let $w\in W$ and  let $\alpha\in R$. 
Then we have 
$H^{i}(w, \alpha)=0$ for every $i\geq 2$.
\end{corollary}
\begin{proof}
Let $V_{1}:=\oplus_{\mu\leq \alpha}\mathfrak{g}_{\mu}$ denote the 
direct sum of the weight spaces of $\mathfrak{g}$ of weights $\mu$ satisfying
$\mu\leq \alpha$. Let $V_{2}:=\oplus_{\mu < \alpha}\mathfrak{g}_{\mu}$ 
denote the direct sum of the weight spaces of $\mathfrak{g}$ of weights $\mu$ 
satisfying $\mu < \alpha$. It is clear that $V_{2}$ is a $B$-submodule of $\mathfrak{g}$ and $V_{1}$ is a $B$-submodule of $\mathfrak{g}$ containing $V_{2}$.

Since the $B$-module $V_{1}/V_{2}$ is isomorphic to $\mathbb{C}_{\alpha}$, we have $H^{i}(w, \alpha)=H^{i}(w, V_{1}/V_{2})$ for every $i\geq 2$. Now, the proof of the corollary follows from
 Lemma 6.3. \end{proof}

The following theorem is a main result in the non simply laced case.

\begin{theorem}
Let $w \in W$.  Then we have
\begin{myenumerate}
\item
$H^{i}(X(w), T_{G/B})=0$ for every 
$i\geq 1$. 
\item
The adjoint representation $\mathfrak{g}$ of $G$ is a $B$-submodule of 
$H^{0}(X(w) , T_{G/B})$  if and 
only if $w^{-1}(\alpha_{0})$ is a negative root.
\end{myenumerate}
\end{theorem}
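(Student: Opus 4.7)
Since the tangent bundle $T_{G/B}$ is the homogeneous vector bundle $\mathcal{L}(\mathfrak{g}/\mathfrak{b})$, both statements reduce to computing $H^{*}(w, \mathfrak{g}/\mathfrak{b})$. For part (1), the plan is to apply $H^{*}(w, -)$ to the short exact sequence $0 \to \mathfrak{b} \to \mathfrak{g} \to \mathfrak{g}/\mathfrak{b} \to 0$ of $B$-modules. Lemma 2.5 gives $H^{i}(w, \mathfrak{g}) = 0$ for every $i \geq 1$, and Lemma 4.8(1) gives $H^{i}(w, \mathfrak{b}) = 0$ for every $i \geq 2$, so the resulting long exact sequence immediately forces $H^{i}(w, \mathfrak{g}/\mathfrak{b}) = 0$ for every $i \geq 1$.

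For part (2), taking degree zero in the same long exact sequence and using $H^{0}(w, \mathfrak{g}) = \mathfrak{g}$ yields
\[ 0 \longrightarrow H^{0}(w, \mathfrak{b}) \longrightarrow \mathfrak{g} \stackrel{\Pi_{w}}{\longrightarrow} H^{0}(w, \mathfrak{g}/\mathfrak{b}) \longrightarrow H^{1}(w, \mathfrak{b}) \longrightarrow 0. \]
I would establish the chain of equivalences: $\mathfrak{g}$ is a $B$-submodule of $H^{0}(w, \mathfrak{g}/\mathfrak{b})$ iff $H^{0}(w, \mathfrak{g}/\mathfrak{b})_{-\alpha_{0}} \neq 0$ iff $H^{0}(w, \mathfrak{b}) = 0$ iff $w^{-1}(\alpha_{0})$ is a negative root. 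Since $\mathfrak{g}$ is irreducible as a $G$-module with unique $B$-stable line $\mathfrak{g}_{-\alpha_{0}}$, any $B$-embedding of $\mathfrak{g}$ into the target produces a weight-$-\alpha_{0}$ vector; conversely, once $H^{0}(w, \mathfrak{b}) = 0$ is known, $\Pi_{w}$ itself is the desired embedding. By the same irreducibility applied to $H^{0}(w, \mathfrak{b})$ (viewed as a $B$-submodule of $\mathfrak{g}$ via Lemma 2.6), this module vanishes iff $H^{0}(w, \mathfrak{b})_{-\alpha_{0}}$ does.

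The observation linking the second and third conditions is Lemma 4.8(2): for any $B$-submodule $V$ of $\mathfrak{g}$, $H^{1}(w, V)_{\mu} \neq 0$ forces $\mu = 0$ or $\mu$ to be a short root with $-\mu \notin S$. Since $-\alpha_{0}$ is a long root, this gives $H^{1}(w, \mathfrak{b})_{-\alpha_{0}} = 0$, and the $-\alpha_{0}$-weight part of the displayed sequence collapses to
\[ 0 \longrightarrow H^{0}(w, \mathfrak{b})_{-\alpha_{0}} \longrightarrow \mathfrak{g}_{-\alpha_{0}} \longrightarrow H^{0}(w, \mathfrak{g}/\mathfrak{b})_{-\alpha_{0}} \longrightarrow 0; \]
since $\mathfrak{g}_{-\alpha_{0}}$ is one dimensional, one end vanishes iff the other is non-zero.

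For the final equivalence I would imitate the filtration argument of Theorem 3.7(2), but work one weight at a time. Choose a $B$-filtration $0 = V_{0} \subset V_{1} \subset \cdots \subset V_{N} = \mathfrak{g}/\mathfrak{b}$ with $V_{i}/V_{i-1} \cong \mathbb{C}_{\alpha_{i}}$ for $\alpha_{i} \in R^{+}$, and lift each $V_{i}$ to a $B$-submodule $W_{i}$ of $\mathfrak{g}$ containing $\mathfrak{b}$. Applying Lemma 4.8 to both $W_{i}$ and $\mathfrak{b}$ together with the sequence $0 \to \mathfrak{b} \to W_{i} \to V_{i} \to 0$ yields $H^{1}(w, V_{i})_{-\alpha_{0}} = 0$, so the $-\alpha_{0}$-weight parts of the sequences $0 \to V_{i-1} \to V_{i} \to \mathbb{C}_{\alpha_{i}} \to 0$ are short exact. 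Telescoping gives $\dim H^{0}(w, \mathfrak{g}/\mathfrak{b})_{-\alpha_{0}} = \sum_{\alpha \in R^{+}} \dim H^{0}(w, \alpha)_{-\alpha_{0}}$. Since for every positive root $\alpha$ one has $w(\alpha) \geq -\alpha_{0}$ and each weight $\nu$ of $H^{0}(w, \alpha)$ satisfies $\nu \geq w(\alpha)$, the summand is non-zero exactly when $w(\alpha) = -\alpha_{0}$, and such an $\alpha$ exists in $R^{+}$ iff $w^{-1}(\alpha_{0})$ is negative. The main obstacle is precisely this $H^{1}$-bookkeeping: whereas Corollary 3.6 gave $H^{1}(w, \alpha) = 0$ outright in the simply laced case, here we can only extract the vanishing at the single weight $-\alpha_{0}$ from Lemma 4.8(2), and the filtration argument must be executed weight by weight rather than globally.
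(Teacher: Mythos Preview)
Your proof is correct and follows the same strategy as the paper, which simply says ``Proof of (2) is similar to that of Theorem 3.7'' after writing out part (1) via the long exact sequence exactly as you do. In fact you are more explicit than the paper: the simply laced proof of Theorem 3.7(2) rested on Lemma 3.5(2) and Corollary 3.6, both of which use the global vanishing $H^{1}(w,V)=0$ from Lemma 3.4, and you correctly observe that in the non simply laced case only the weight-specific vanishing $H^{1}(w,V)_{-\alpha_{0}}=0$ from Lemma 4.8(2) is available, and carry the filtration argument through at that single weight. This is precisely the adaptation the paper's ``similar to'' is pointing at.
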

\begin{proof}
The proof is similar to that of Theorem 4.1.
We provide a proof here for completeness.

As in the proof of Theorem 4.1, it is sufficient to prove the following:
\begin{myenumerate}
\item
$H^{i}(w, \mathfrak{g}/\mathfrak{b})=0$ for every $i\geq 1$. 
\item
The adjoint representation $\mathfrak{g}$ of $G$ is a $B$-submodule of 
$H^{0}(w , \mathfrak{g}/\mathfrak{b})$ if and only if $w^{-1}(\alpha_{0})$ 
is a negative root. 
\end{myenumerate}

{\it Proof of (1):} As in the proof of Theorem 4.1, let $V_{1}:=\mathfrak{g}$ and  
$V_{2}:=\mathfrak{b}$. The natural projection $\Pi:\mathfrak{g} \longrightarrow 
\mathfrak{g}/\mathfrak{b}$ of $B$-modules induces a 
homomorphism $\Pi_{w}:H^{0}(w, \mathfrak{g})
\longrightarrow H^{0}(w, \mathfrak{g}/\mathfrak{b})$ of $B$-modules.

We have the short exact sequence    
 $0\longrightarrow \mathfrak{b} \longrightarrow \mathfrak{g} \longrightarrow 
\mathfrak{g}/\mathfrak{b} \longrightarrow 0$ of $B$-modules.

Applying $H^{0}(w, -)$ to this short exact sequence of $B$-modules,  we obtain 
the following long exact sequence of $B$-modules: 

$$\cdots H^{i}(w, \mathfrak{b}) \longrightarrow H^{i}(w, \mathfrak{g}) 
\longrightarrow H^{i}(w, \mathfrak{g}/\mathfrak{b})\longrightarrow 
H^{i+1}(w, \mathfrak{b}) \cdots $$
On the other hand, by Lemma  2.5(2), we have 
$H^{i}(w, \mathfrak{g})=0$ for every $i\geq 1$. Further, by Lemma 6.2,  
we have $H^{i+1}(w, \mathfrak{b})=0$ for every $i\geq 1$.
Applying this in the above long exact sequence of $B$-modules, we conclude 
that $H^{i}(w, \mathfrak{g}/\mathfrak{b})=0$ for every $i\geq 1$. 

This proves (1).

The proof of (2) is similar to that of Theorem 4.1.
\end{proof}

The following result is the analogue of Theorem 4.2  in the non simply laced case.

\begin{theorem} 
Let $w\in W$ be such that $X(w)$ is smooth. Let $A_{w}$ denote 
the connected component of the group of all automorphisms of $X(w)$ containing the identity automorphism. For the left action of $G$ on $G/B$, let  $P_{w}$ denote the stabiliser of $X(w)$ in 
$G.$ The homomorphism  $\phi_{w}:P_{w} \longrightarrow A_{w}$ of algebraic groups induced by the action of $P_{w}$ on $X(w)$ is injective if and only if  
$w^{-1}(\alpha_{0})$ is a negative root.
\end{theorem}
\begin{proof}
The proof is similar to that of Theorem 4.2(2).
\end{proof}

The following remark illustrates the difference between the main results in the simply laced 
case and those in the non simply laced case.

\begin{remark} 
The adjoint representation $\mathfrak{g}$ of $G$ could be a proper $B$-submodule of  the space 
$H^{0}(X(w) , T_{G/B})$ of global sections. Let $G$ be of type $B_{2},$ and $w=s_{2}s_{1}s_{2}.$ Here, we follow the convention with $\langle \alpha_{1} , \alpha_{2} \rangle =-2$ and $\langle \alpha_{2} , \alpha_{1} \rangle =-1.$  In this case,  $H^{0}(X(w) , T_{G/B})$ is a direct sum of the $B$-modules $\mathfrak{g}$ and $\mathbb{C}_{-(\alpha_{1}+\alpha_{2})}$.
\end{remark}

{\bf Acknowledgements} 
We are very grateful to Professor M. Brion for pointing out this problem. 
We would like to thank the referee for useful comments to improve the exposition.
We would like to thank Professor C. S. Seshadri, A. J. Parameswaran and D. S. Nagaraj  for useful
discussions.

\end{document}